\newcommand{\I}{\mathcal I}
\newcommand{\ind}{\{1,\dots,m\}}
\newcommand{\F}{\mathcal F}
\newcommand{\R}{\mathbb R}
\newcommand{\N}{\mathbb N}
\newcommand{\T}{\mathbb T}
\newcommand{\Z}{\mathbb Z}
\newcommand{\eps}{\varepsilon}
\newcommand{\ucv}{\rightrightarrows}
\newcommand{\uu}{\mathbf u}
\newcommand{\vv}{\mathbf v}
\newcommand{\ww}{\mathbf w}
\newcommand{\aaa}{\mathbf a}
\newcommand{\1}{\mathbbm1}
\newcommand{\A}{\mathcal{A}}
\newcommand{\hh}{\mathcal{H}}
\newcommand{\llambda}{\boldsymbol\lambda}
\newcommand{\mmu}{\boldsymbol\mu}
\renewcommand{\S}{\mathcal{S}}
\newcommand{\tagliato}{$\kern-5 mm -$}
\newcommand{\tagliat}{$\kern-4 mm -$}
\newcommand{\D}[1]{\mbox{\rm #1}}
\newcommand{\dd}{\D{d}}
\newtheorem{teorema}{Theorem}[section]
\newtheorem{prop}[teorema]{Proposition}
\newtheorem{lemma}[teorema]{Lemma}
\newtheorem{definition}[teorema]{Definition}
\newtheorem{cor}[teorema]{Corollary}
\newtheorem{guess}[teorema]{Remark}
\newtheorem{example}[teorema]{Example}
\newenvironment{dimo}{{\bf\noindent Proof.}}{\qed}
\newenvironment{oss}{\begin{guess} \begin{rm}}{\end{rm} \end{guess}}
\begin{document}

\title{Aubry sets for weakly coupled systems \\ of Hamilton--Jacobi equations}
\author{Andrea Davini \and Maxime Zavidovique}
\address{Dip. di Matematica, {Sapienza} Universit\`a di Roma,
P.le Aldo Moro 2, 00185 Roma, Italy}
\email{davini@mat.uniroma1.it}
\address{
IMJ (projet Analyse Alg\' ebrique), UPMC,  
4, place Jussieu, Case 247, 75252 Paris C\' edex 5, France}
\email{zavidovique@math.jussieu.fr} \keywords{weakly coupled systems of Hamilton--Jacobi equations, viscosity solutions, weak KAM Theory}
\subjclass[2010]{35F21, 49L25, 37J50.}

\begin{abstract}
We introduce a notion of Aubry set for weakly coupled systems of Hamilton--Jacobi equations on the torus and characterize it as the region where the obstruction to the existence of globally strict critical subsolutions concentrates. As in the case of a single equation, we prove the existence of critical subsolutions which are strict and smooth outside the Aubry set. This allows us to derive in a simple way a comparison result among critical sub and supersolutions with respect to their boundary data on the Aubry set, showing in particular that the latter is a uniqueness set for the critical system. 
We also highlight some rigidity phenomena taking place on the Aubry set. 
\end{abstract}
\maketitle
\section*{Introduction}

In this paper we will consider a {\em weakly coupled system} of Hamilton--Jacobi equations of the form  
\begin{equation}\label{intro wcs}
H_i(x,Du_i)+\sum_{j=1}^m b_{ij}(x)u_j(x)=a\quad\hbox{in $\T^N$}\qquad\hbox{for every $i\in\ind$,}
\end{equation}
where $a$ is a real constant, $H_1,\dots,H_m$ are continuous Hamiltonians defined on the cotangent bundle of $\T^N$, convex and coercive in the momentum variable, and $B(x):=\big(b_{ij}(x)\big)$ is a continuous $m\times m$ matrix satisfying
\[
b_{ij}(x)\leqslant 0\quad \hbox{for $j\not=i$,}\quad \sum_{j=1}^m b_{ij}(x)=0\quad
\qquad\hbox{for every $x\in\T^N$ and $i\in\{1,\dots,m\}$.} 
\]
The coupling matrix will be also assumed {\em irreducible}, meaning, roughly speaking, that the coupling is non--trivial and the system cannot be split into independent subsystems. Under these assumptions it is known that there is a unique value for $a$,  denoted by $c$ and termed {\em critical} in the sequel, such that the corresponding system \eqref{intro wcs} admits viscosity solutions. 
Critical solutions are, instead, not unique: for instance, it is easily seen that $\uu+(\lambda,\dots,\lambda)$ is a solution if $\uu:=(u_1,\dots,u_m)$ is so, for every $\lambda\in\R$; but in general they are not unique even up to addition of vectors of the form $(\lambda,\dots,\lambda)$.

In this paper, we perform a qualitative analysis on the critical weakly coupled system and we investigate  the kind of non--uniqueness phenomena taking place at the critical level. This is obtained by extending to the case at issue some PDE aspects of the so called weak KAM Theory, holding when there is one equation only and with no coupling.  
The core of our analysis is discovering that, at the critical level, a very rigid object appears, characterized as the region where the obstructions of strict critical subsolutions concentrates and named after {\em Aubry} in analogy with the case of a single equation. We explore the properties of this object, in particular we show that it is a uniqueness set for the critical system and we highlight some rigidity phenomena taking place on it. 

Weakly coupled systems of the kind herein considered arise naturally in optimal control problems associated with  randomly switching costs, where the switching is governed by specific Markov chains, see \cite{FlemSon, YinZhang}. In the PDE literature, they have been studied  as a particular instance of monotone systems, see \cite{Engler, IsKo, Len88},  and, more recently, in a series of papers addressed to study related homogenization problems in the periodic \cite{CamLey, MT2} and  stationary ergodic setting \cite{Fe13}, and the long time behaviour of solutions to the associated evolutionary version of the system  \cite{leyetal, mitaketran, MT3, Ng}. 
These works can be read as a generalization of results and techniques established in the case of a single equation: see \cite{LPV} for the basic homogenization results of Hamilton--Jacobi equations in the periodic setting,  \cite{RezTa00, Sou99, ArSo13} for the subsequent extension to the stationary ergodic setting, \cite{NR,Fa1,BS, Ro01, DS, Ishii08} for the long time convergence towards stationary solutions.  

In the afore mentioned references the existence of a unique constant for which the system \eqref{intro wcs} is solvable is obtained via  arguments analogous to the ones introduced in \cite{LPV}, in particular by making use of the so called {\em ergodic approximation} (and for this reason, the constant $c$ is also termed {\em ergodic}). A different approach, based on a second order approximation and on the new adjoint methods introduced by Evans \cite{Ev10} for Hamilton--Jacobi equations, is instead proposed in \cite{CaGoTran}. On the other hand, little attention has been devoted  to the qualitative study of the associated critical, or ergodic, system and to the kind of nonuniqueness phenomena that occur. Some results in this direction have been obtained in \cite{leyetal}, where the existence of a uniqueness set for the critical system is derived in the specific case{\footnote{The conditions on the coupling matrix are instead more general, in the sense that the sums of the elements on each 
row of $B(x)$ are 
not required to vanish identically on the whole torus, but only on a part of it.}} when the Hamiltonians are of the form 
\[
 H_i(x,p):=F_i(x,p)-V_i(x)\qquad\hbox{for every $(x,p)\in\T^N\times\R^N$ and $i\in\ind$,}
\]
with $V_i$ non--negative functions satisfying $\bigcap_{i=1}^m V_i^{-1}(\{0\})\not=\varnothing$, and $F_i$ convex and coercive in $p$, and such that
$$
F_i(x,p)\geqslant F_i(x,0)=0\quad\hbox{for every $(x,p)\in\T^N\times\R^N.$}
$$  

For the analysis performed in this paper we employ techniques borrowed from the weak KAM Theory. This theory, developed by Fathi to study the dynamics of Hamiltonian systems, has revealed to be a powerful tool for a fine qualitative analysis of the stationary critical equation associated with a Tonelli Hamiltonian. Fathi discovered in fact \cite{Fa3,Fa4, Fa5, Fa1} that the Aubry set
appears as the dynamical part of the set of points where all the critical viscosity subsolutions are smooth;
moreover, it is a uniqueness set for the critical equation. His approach relies on a characterization of the critical solutions in terms of variational formulae. We refer the reader to  \cite{Fa} for an expository presentation of the theory for Tonelli Hamiltonians and to \cite{Fa12} for the extension of its PDE aspects to less regular cases.

The generalization of the weak KAM Theory proposed in this paper was however nontrivial. A first serious difficulty is that we could not find suitable variational formulae to express the solutions of the system. We have overcome this problem by choosing a more PDE oriented viewpoint to weak KAM Theory and by making use of viscosity solution techniques. A second issue is that, to extend this approach to the case at issue, we needed to better understand the different structure of the problem and the role of the coupling. This has been achieved by recognizing and exploiting the vectorial nature of the system.

The starting point of our study relies on a different definition of the critical value $c$, given as the minimal $a\in\R$ for which  the corresponding weakly coupled system admits viscosity subsolutions. We subsequently show that the associated critical weakly coupled system is the unique among those of the family \eqref{intro wcs} that admits viscosity solutions. This characterization of the critical value is not new, see for instance \cite{mitaketran}, but our approach to the existence part is different: it is based on a fixed point argument in the spirit of \cite{Fa} and on the properties of the semigroup associated with the evolutionary version of the system \eqref{intro wcs}.

We then study the corresponding {critical weakly coupled system} and show that the obstruction to the existence of globally strict subsolutions  is not spread indistinctly on the torus, but concentrates on a closed set $\A$, that we call {\em Aubry set} in analogy to the case of a single equation. Moreover, we show the existence of {critical subsolutions} smooth and strict outside the Aubry set. This allows us to derive in a simple way a comparison result among critical sub and supersolutions satisfying suitable ``boundary'' conditions on $\A$, see Theorem \ref{teo comparison}. This generalizes to our setting Theorem 3.3 in \cite{leyetal}. In particular, we infer that the Aubry set is a uniqueness set for the critical system. 
We furthermore show that the trace of any critical subsolution on $\A$ can be extended on the whole torus in such a way that the output is a critical solution, see Theorem \ref{teo Lax-type}.  

Our study highlights some rigidity phenomena taking place on the Aubry set. First, we show that any pair of critical subsolutions differ, at each point $y$ of $\A$, by a vector of the form $k\,(1,1,\dots,1) $, see Theorem \ref{rigid}. 
In the particular case when there exists a critical subsolution of the kind $\big(v(x),v(x),\dots,v(x)\big) $, we infer that any other critical subsolution is of this form on $\A$. This accounts for the kind of symmetries already observed in \cite{leyetal} for the particular class of Hamiltonians therein considered, see Section \ref{sez ley} for more details. 

A second rigidity phenomenon that we point out is when the Hamiltonians are additionally assumed strictly convex in the momentum: in this case we prove that, at any point of the Aubry set, the intersection of the reachable gradients of all the critical subsolutions is always nonempty, see Proposition \ref{prop common gradient}. This can be regarded as a weak version  of a result holding in the scalar case: under suitable regularity assumptions on the Hamiltonian, it is in fact well known that critical  subsolutions are all differentiable on the Aubry set and have the same gradient, see \cite{Fa, FSC1, FS05}. 

We end our study by presenting some situations where more explicit results can be obtained for the critical value and for the Aubry set, see Section \ref{examples}. In the first example, we focus on the setting studied in \cite{leyetal} and we show that our notion of Aubry set is consistent with the one therein given. Next we show how such analysis can be extended to a variant of this model  considered in \cite{Ng}. For this, we make use of a general result on weakly coupled systems with constant coupling matrices which is of independent interest, see Proposition \ref{prop new}. 
The last example contains, as a particular instance, the case when the Hamiltonians are all equal, say to $H$. In this specific situation, we show that the critical value and the Aubry set of the weakly coupled system agree with the critical value and the Aubry set of $H$. We furthermore show that  
the solutions of the critical system are all of the  form $u(x)(1,\dots,1)$, where $u$ is a critical solution for $H$, thus showing that, as far as critical solutions of the system are concerned, the coupling is not playing any effective role.\smallskip   

This paper is organized as follows. In Section \ref{sez notation} we fix the notations and assumptions, and we give a brief overview of existing results on weakly coupled systems.
Section \ref{Critical v} is devoted to the definition of the critical value and to the study of its main properties.  
Some proofs are postponed to the Appendix \ref{appendix semiconcave}. 
In Section \ref{aubry mane} we give the definition of Aubry set and explore its properties. 
The first part of Section \ref{regularization} is devoted to the regularization of subsolutions outside of the Aubry set, while in the second part we prove the rigidity phenomenon enjoyed by the reachable gradients of the critical subsolutions previously described. The other rigidity phenomenon is instead proved at the beginning of Section \ref{comparison} and some consequences are drawn. In the remainder of the section we prove the comparison principle, Theorem \ref{teo comparison}, and we show how the trace of a critical subsolution on the Aubry set can be extended to the whole torus to produce a critical solution, see Theorem \ref{teo Lax-type}. In Section \ref{examples} we illustrate our theory on some examples.\medskip

\smallskip\indent{\textsc{Acknowledgements. $-$}}
This research was initiated while the first author was visiting the Institut de Math\'ematiques de Jussieu,  
Universit\'e Pierre et Marie Curie, Paris, that he acknowledges for the kind hospitality.  
The first author has been supported by {\em Sapienza} Universit\`a di Roma through the Research Project 2011 {\em Analisi ed approssimazione di modelli differenziali nonlineari in fluidodinamica e scienza dei materiali}. 
The second author is partially supported by ANR-12-BLAN-WKBHJ.
\bigskip
 
 \numberwithin{equation}{section}

\begin{section}{Preliminaries}

\begin{subsection}{Notations}\label{sez notation}

Throughout the paper, we will denote by $\T^N = \R^N / \Z^N$ the $N$--dimensional flat torus, where $N$ is
an integer number. The scalar product in $\R^N$ will be denoted by
$\langle\,\cdot\;, \cdot\,\rangle$, while the symbol $|\cdot|$
stands for the Euclidean norm. Note that the latter induces a distance
on $\T^N$,  denoted by $d(\cdot,\cdot )$, defined as
\[
d(x,y):=\min_{\kappa\in\Z^N}|x-y+\kappa |\qquad\hbox{for every $x,y\in\T^N$}.
\]
More generally, if $X\subset \T^N$ is a set, we will denote by $d(x,X):=\inf\limits_{y\in X} d(x,y)$ the distance from a  point $x\in \T^N$ to $X$.
We will denote by $B_R(x_0)$ and $B_R$ the closed balls in $\T^N$ of
radius $R$ centered at $x_0$ and $0$, respectively.

With the symbol $\R_+$ we will refer to the set of
nonnegative  real numbers.  We say
that a property holds {\em almost everywhere} ($a.e.$ for short)
in a subset $E$ of  $\T^N$ if it holds up to a {\em negligible} subset of $E$, i.e. a
subset of zero $N$--dimensional Lebesgue measure.

\indent By modulus we mean a nondecreasing function from $\R_+$ to
$\R_+$, vanishing and continuous at $0$. A function $g:\R_+\to\R$
will be termed {\em coercive} if
$\displaystyle{\lim_{h\to +\infty} {g(h)}=+\infty}$.

We will say that $(\rho_n)_n$ is a sequence of {\em standard mollifiers} if \ $\rho_n(x):=n^N\rho(nx)$ in $\R^N$\quad for each $n\in\N$, where $\rho$ is a smooth, non--negative function on $\R^N$, supported in $B_1$ and such that its integral over $\R^N$ is equal to $1$.\smallskip\par  

Given a continuous function $u$ on $\T^N$, we will call  {\em subtangent} (respectively, {\em supertangent}) of $u$ at $x_0$ a function $\phi$ of class $ C^1$ in a neighborhood $U$ of $x_{0}$ such that $u-\phi$ has a local minimum (resp., maximum) at $x_{0}$. Its gradient $D\phi(x_0)$ will be called a {\em subdifferential}  (resp. {\em superdifferential}) of $u$ at $x_0$. The set of sub and superdifferentials of $u$ at $x_0$ will be denoted $D^-u(x_0)$ and 
$D^+u(x_0)$, respectively.  The function $\phi$ will be furthermore termed {\em strict subtangent} (resp., {\em strict supertangent}) if $u-\phi$ has a {\em strict} local minimum (resp., maximum) at $x_{0}$. Any subtangent (resp., supertangent) $\phi$ of $u$ can be always assumed strict at $x_{0}$ without affecting $D\phi(x_0)$ by possibly replacing it with 
$\phi-d^2(x_{0},\cdot)$ (resp. $\phi+d^2(x_{0},\cdot)$).  We recall that $u$ is
differentiable at $x_0$ if and only if $D^+u(x_0)$ and $D^-u(x_0)$ are
both nonempty. In this instance, $D^+u(x_0)=D^-u(x_0)=\{Du(x_0)\}$. We refer the
reader to \cite{CaSi00} for the 
proofs.

When $u$ is locally Lipschitz in $\T^N$, we will denote by $\partial^*u(x_0)$ the set of 
{\em reachable gradients} of $u$ at $x_0$, that is the set
\[
\partial^* u(x_0)=\{\lim_n D u(x_n)\,:\,\hbox{$u$ is differentiable at $x_n$, $x_n\to x_0$}\,\},
\]
while the {\em Clarke's generalized gradient} $ \partial^c  u(x_0)$ is the closed convex hull of $\partial^* u(x_0)$. 
The set $\partial^c u(x_0)$ contains both $D^+u(x_0)$ and $D^-u(x_0)$, in particular $Du(x_0)\in \partial^c u(x_0)$ at any differentiability point $x_0$ of $u$. We refer the reader to \cite{Cl} for a detailed treatment of the subject.

\indent 
We will denote by $\|g\|_\infty$ the usual $L^\infty$--norm of $g$, where the latter is a   
measurable real function defined on $\T^N$. 
We will write
$g_n\ucv g$ in $\T^N$ to mean that the sequence of
functions $(g_n)_n$ uniformly converges to $g$ in $\T^N$, i.e. $\|g_n-g\|_{\infty}\to 0$. 
We will denote by $\left(\D{C}(\T^N)\right)^m$ the Banach space of continuous functions $\uu=(u_1,\dots,u_m)^T$ 
from $\T^N$ to $\R^m$, endowed with the norm 
\[
 \|\uu-\vv\|_\infty=\max_{1\leqslant i\leqslant m}\|u_i-v_i\|_\infty,\qquad\hbox{$\uu,\vv\in\left(\D{C}(\T^N)\right)^m$}.
\]
We will write $\uu^n\ucv \uu$ in $\T^N$ to mean that $\|\uu^n-\uu\|_\infty\to 0$. A function $\uu\in \left(\D{C}(\T^N)\right)^m$ will be termed Lipschitz continuous if each of its components is $\kappa$--Lipschitz continuous, for some $\kappa>0$. Such a constant $\kappa$ will be called a {\em Lipschitz constant} for $\uu$. The space of all such functions will be denoted by $\left(\D{Lip}(\T^N)\right)^m$. 
\smallskip\par

We will denote by $\1=(1,\cdots,1)^T$ the vector of $\R^m$ having all components equal to 1, where the upper--script symbol $T$ stands for the transpose. We consider the following partial relations between elements  $\mathbf{a},\mathbf{b}\in\R^m$:
$\aaa \leqslant \mathbf{b}$ (respectively,  
$\aaa <\mathbf{b}$)   if $a_i\leqslant b_i$ (resp., $<$) for every $i\in\ind$. Given 
two functions $\uu,\vv:\T^N\to\R^m$, we will write $\uu \leqslant \vv$  in $\T^N$ (respectively, $<$) to mean that $\uu(x)\leqslant \vv(x)$ \big(resp., $\uu(x)<\vv(x)$\big)  {for every $x\in\T^N$}. 
\smallskip\par

\end{subsection}

\begin{subsection}{Linear algebra}\label{sez linear algebra}
Here we briefly present some elementary linear algebraic results concerning coupling matrices. 

\begin{definition}
Let $B=(b_{ij})_{i,j}$ be a $m\times m$--matrix.
\begin{itemize}
\item[{\em (i)}]
We say that $B$ is a {\em  coupling matrix} if it satisfies the following conditions:
\begin{equation}\label{hyp coupling}
b_{ij}\leqslant 0\ \hbox{for $j\not=i$,}\quad\quad \sum_{j=1}^m b_{ij}\geqslant 0
\qquad\hbox{for any $i\in\{1,\dots,m\}$.}\tag{C}
\end{equation}
It is additionally termed {\em degenerate} if
\[
\sum_{j=1}^m b_{ij}=0
\qquad\hbox{for any $i=1,\dots,m$.}
\]
\item[{\em (ii)}]
We say that $B$ is {\em irreducible} if for every subset $\I\subsetneq\{1,\dots,m\}$ there exist $i\in\I$ and $j\not\in\I$ such that $b_{ij}\not=0$.\\
\end{itemize}
\end{definition}

When a coupling matrix is also irreducible, further information can be derived on its elements. We have 

\begin{prop}\label{prop positive diagonal}
Let $B=(b_{ij})_{i,j}$ be an irreducible  $m\times m$  coupling matrix. Then \quad $b_{ii}>0$ \quad for every $i\in\ind$.
\end{prop}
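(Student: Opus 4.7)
The plan is to proceed by contradiction, combining the two inequalities built into the definition of a coupling matrix with the existence condition supplied by irreducibility.

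As a preliminary observation, I would simply rearrange the row-sum inequality: since $b_{ij}\leqslant 0$ for every $j\not=i$, the coupling condition $\sum_{j=1}^m b_{ij}\geqslant 0$ reads $b_{ii}\geqslant -\sum_{j\not=i}b_{ij}=\sum_{j\not=i}|b_{ij}|\geqslant 0$. Thus the diagonal entries of any coupling matrix are automatically non-negative, and the content of the proposition is upgrading this to strict positivity.

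Now assume for contradiction that $b_{ii}=0$ for some $i\in\ind$. Then the inequality $\sum_{j\not=i}b_{ij}\geqslant 0$ combined with the fact that each summand is $\leqslant 0$ forces $\sum_{j\not=i}b_{ij}=0$ and hence $b_{ij}=0$ for every $j\not=i$. In other words, the entire $i$-th row of $B$ is zero. At this point I would apply irreducibility with the proper subset $\I:=\{i\}\subsetneq\ind$: the definition produces some $j\not\in\I$ (in particular $j\not=i$) with $b_{ij}\not=0$, which directly contradicts the vanishing of row $i$.

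There is essentially no obstacle here: once one notices that the two coupling conditions together imply that a vanishing diagonal entry forces the whole corresponding row to vanish, the irreducibility condition applied to the singleton $\{i\}$ closes the argument immediately. The trivial case $m=1$ needs no separate treatment since irreducibility would then require a non-zero off-diagonal entry in a matrix with no off-diagonal entries, so the hypothesis is vacuous.
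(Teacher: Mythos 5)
Your proof is correct and is essentially the paper's own argument: assuming $b_{ii}=0$, the coupling conditions force the whole $i$-th row to vanish, contradicting irreducibility applied to $\I=\{i\}$. You merely spell out the intermediate inequality $b_{ii}\geqslant\sum_{j\not=i}|b_{ij}|$ more explicitly than the paper does.
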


\begin{dimo}
Indeed, if $b_{i_0 i_0}=0$ for some $i_0\in\ind$, condition \eqref{hyp coupling} would imply $b_{i_0 j}=0$ for every $j\in\ind$, in  contradiction with the fact that $B$ is irreducible.
\end{dimo}

The following invertibility  criterion holds:

\begin{prop}\label{prop Ker}
Let $B=(b_{ij})_{i,j}$ be an $m\times m$ irreducible  coupling matrix. Then
\begin{itemize}
 \item[{\em (i)}]\quad $\D{Ker}(B)\subseteq\D{span}\{(1,\dots,1)^T\}=\R \1$;\smallskip
\item[{\em (ii)}]\quad $\D{Ker}(B)=\D{span}\{(1,\dots,1)^T\} = \R \1$ \quad if and only if \quad  $B$ is degenerate.\medskip
\end{itemize}
In particular, $B$ is invertible if and only if
\[
 \sum_{j=1}^m b_{ij}> 0\qquad\hbox{for some $i\in\{1,\dots,m\}$.}
\]
\end{prop}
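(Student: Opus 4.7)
The plan is to prove (i) by a maximum-principle type argument combining irreducibility with the coupling condition, and then deduce (ii) and the invertibility criterion as easy consequences.

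For (i), I would take $\vv=(v_1,\dots,v_m)^T\in\D{Ker}(B)$ and assume, for contradiction, that the components of $\vv$ are not all equal. Set $M:=\max_k v_k$ and $\I:=\{i : v_i=M\}$; by assumption $\I\subsetneq\{1,\dots,m\}$. By irreducibility there exist $i\in\I$ and $j\notin\I$ with $b_{ij}\neq 0$, and since $j\neq i$ the coupling condition \eqref{hyp coupling} forces $b_{ij}<0$. The identity $(B\vv)_i=0$ can be rewritten as
\[
\sum_{k=1}^m b_{ik}(v_k-M)\;=\;-M\sum_{k=1}^m b_{ik}.
\]
Each summand on the left is $\geqslant 0$: the term $k=i$ vanishes, and for $k\neq i$ we have $b_{ik}\leqslant 0$ together with $v_k-M\leqslant 0$. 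Moreover the term $k=j$ is strictly positive since $b_{ij}<0$ and $v_j<M$. Hence the left hand side is strictly positive, so $-M\sum_k b_{ik}>0$. Since $\sum_k b_{ik}\geqslant 0$, this forces both $\sum_k b_{ik}>0$ and $M<0$.

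The same reasoning applied to $-\vv\in\D{Ker}(B)$ (equivalently, running the argument at a row achieving the minimum of $\vv$) yields $\min_k v_k>0$. But $\min_k v_k\leqslant M$, contradicting $M<0$. Therefore all components of $\vv$ are equal, i.e.\ $\vv\in\R\1$, which proves (i).

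For (ii), the direction ``$B$ degenerate $\Rightarrow \D{Ker}(B)=\R\1$'' follows immediately from (i) and from $B\1=0$, which is exactly the degeneracy condition. Conversely, if $\D{Ker}(B)=\R\1$, then $B\1=0$ gives $\sum_j b_{ij}=0$ for every $i$, so $B$ is degenerate. Finally, the ``in particular'' statement is a direct consequence: by (i), $\D{Ker}(B)$ is either $\{0\}$ or $\R\1$, and by (ii) the latter occurs exactly when $B$ is degenerate; negating, $B$ is invertible iff $\sum_j b_{ij}>0$ for at least one $i\in\ind$ (recalling that $\sum_j b_{ij}\geqslant 0$ always).

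The main (minor) obstacle is the sign subtlety in the maximum argument: one cannot conclude directly from a single sided inequality, because $M$ could be negative; this is resolved by running the symmetric argument on the minimum as well and comparing the two conclusions.
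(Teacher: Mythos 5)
Your proof is correct and follows essentially the same route as the paper: a maximum-principle argument on the argmax set of $\vv$, using irreducibility to produce an off-diagonal nonzero entry $b_{ik}$ connecting that set to its complement, with (ii) and the invertibility criterion then read off immediately. The only difference is in how the contradiction is closed: the paper uses the chain $b_{ii}v_i=\sum_{j\neq i}v_j|b_{ij}|\leqslant v_i\sum_{j\neq i}|b_{ij}|\leqslant v_ib_{ii}$, whose last step tacitly assumes $v_i\geqslant 0$, whereas your rewriting in terms of $v_k-M$ together with the symmetric run at the minimum handles the sign of the maximum explicitly — a small but genuine gain in rigor over the paper's own write-up.
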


\begin{dimo}
We first remark that, by assumption \eqref{hyp coupling},
\begin{equation}
 b_{ii}\geqslant \sum_{j\not=i} |b_{ij}|\qquad\hbox{for every $i\in\{1,\dots,m\}$.}
\end{equation}
Let us prove {\em (i)}. Let $\vv=(v_1,\dots,v_m)^T\in\D{Ker}(B)$ and set
\[
 \I=\big\{i\in\{1,\dots,m\}\,:\,v_i=\max\{v_1,\dots,v_m\}\,\big\}.
\]
We claim that $\I=\{1,\dots,m\}$. Indeed, if this were not the case, by the irreducible character of $B$ there would exist $i\in\I$ and $k\not\in\I$ such that $b_{ik}\not=0$. Since $B\vv=0$, we would get in particular
\[
 b_{ii}v_i=\sum_{j\not=i} v_j |b_{ij}|\leqslant v_i \sum_{j\not=i} |b_{ij}|\leqslant v_i b_{ii}.
\]
Then the inequalities must be  equalities. We infer 
\[
 v_j|b_{ij}|=v_i|b_{ij}|\qquad\hbox{for every $j\not=i$,}
\]
in particular $v_k=v_i=\max\{v_1,\dots,v_m\}$, yielding that $k$ belongs to $\I$, a contradiction.

The remainder of the statement trivially follows from item {\em (i)}.
\end{dimo}
\bigskip

%
%
%

The following proposition gives an obstruction to being in the image of a degenerate coupling matrix.
\begin{prop}\label{prop Im}
Let $B$ a coupling and degenerate $m\times m$ matrix. If $ \aaa =(a _1,\dots,a _m)^T$  satisfies $a _i>0$ for every $i\in\ind$, then $ {\aaa }\not\in  \D{Im}(B)$.
\end{prop}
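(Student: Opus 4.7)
The plan is to argue by contradiction, testing the row of $B$ corresponding to an index where a hypothetical preimage $\vv$ of $\aaa$ attains its minimum. This is the natural ``maximum principle'' style trick for coupling matrices.

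First I would suppose, for the sake of contradiction, that there exists $\vv=(v_1,\dots,v_m)^T\in\R^m$ such that $B\vv=\aaa$. Then I would pick an index $i_0\in\ind$ realising
\[
v_{i_0}=\min\{v_1,\dots,v_m\}.
\]
Looking at the $i_0$-th row, I have
\[
a_{i_0}=\sum_{j=1}^m b_{i_0 j}\,v_j=b_{i_0 i_0}v_{i_0}+\sum_{j\neq i_0}b_{i_0 j}\,v_j.
\]
Now I would invoke the degeneracy hypothesis to rewrite $b_{i_0 i_0}=-\sum_{j\neq i_0}b_{i_0 j}$, obtaining
\[
a_{i_0}=\sum_{j\neq i_0}b_{i_0 j}\,(v_j-v_{i_0}).
\]

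The conclusion then follows at once from the signs: for every $j\neq i_0$ one has $b_{i_0 j}\leqslant 0$ by assumption \eqref{hyp coupling}, while $v_j-v_{i_0}\geqslant 0$ by the choice of $i_0$. Hence each summand is nonpositive, giving $a_{i_0}\leqslant 0$, which contradicts the assumption $a_{i_0}>0$.

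I do not expect any real obstacle here; the only subtlety is being careful to exploit the \emph{degenerate} character of $B$ (not merely \eqref{hyp coupling}) to eliminate $b_{i_0 i_0}$, and to test against the \emph{minimum} of the components of $\vv$ rather than the maximum (testing against the maximum would only yield $a_{i_0}\geqslant 0$, which is consistent with $a_{i_0}>0$ and hence useless). Notice also that, in contrast with Proposition \ref{prop Ker}, irreducibility of $B$ is not needed for this statement.
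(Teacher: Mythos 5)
Your proof is correct and follows essentially the same route as the paper's: test the row at an index where $\vv$ attains its minimum, use degeneracy to eliminate the diagonal term, and conclude from the sign conditions that $a_{i_0}\leqslant 0$. The paper's version writes the same estimate as $a_k=\sum_j b_{kj}v_j\leqslant \sum_j b_{kj}v_k=0$, which is just a rearrangement of your identity $a_{i_0}=\sum_{j\neq i_0}b_{i_0 j}(v_j-v_{i_0})$.
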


\begin{dimo}
Let us assume by contradiction that there exists ${  \vv}=(v_1,\dots,v_m)^T$ such that
\[
 B  \vv= {\aaa}.
\]
Let $v_k=\min\{v_1,\dots,v_m\}$. We have
$$a_k=\sum_{j=1}^m b_{kj}v_j\leqslant \sum_{j=1}^m b_{kj}v_k =0,$$
in contradiction with the hypothesis $a _k>0$.\medskip
\end{dimo}
\end{subsection}

\begin{subsection}{Weakly coupled systems}\label{Wcs}
Throughout the paper, we will call {\em convex Hamiltonian} a
function $H$ satisfying the following set of assuptions:\smallskip
\begin{itemize}
    \item[(H1)] \quad $H:\T^N\times\R^N\to\R\qquad$ is
    continuous;\smallskip\\
    \item[(H2)] \quad $p\mapsto H(x,p)\qquad\hbox{is  convex on $\R^N$ for any
    $x\in \T^N$;}$ \smallskip\\
     \item[(H3)] \quad there
     exist two  coercive  functions $\alpha,\beta:\R_+\to\R$ such
     that
     \[
     \alpha\left(|p|\right)\leqslant H(x,p)\leqslant \beta\left(|p|\right)\qquad\hbox{for all
     $(x,p)\in \T^N\times\R^N$.}
     \]
\end{itemize}
The Hamiltonian $H$ will be termed {\em strictly convex} if it additionally satisfies the following stronger 
assumption:
\begin{itemize}
\item[(H2)$'$] \quad $p\mapsto H(x,p)\qquad\hbox{is strictly
convex on $\R^N$ for any $x\in \T^N$}$.\medskip
\end{itemize}

Moreover, we will denote by $B(x)=\big(b_{ij}(x)\big)_{i,j}$ an $m\times m$--matrix with continuous coefficients $b_{ij}(x)$ on $\T^N$. If not otherwise stated, the following hypotheses will be always assumed: 
\begin{itemize}
 \item[(B1)]\quad $B(x)$ is an irreducible coupling matrix for every $x\in\T^N$;\medskip
 \item[(B2)]\quad $B(x)$ is degenerate for every $x\in\T^N$.\medskip
\end{itemize}

Let $H_1(x,p),\dots,H_m(x,p)$ be convex Hamiltonians, i.e. functions satisfying conditions (H1)--(H3).
We are interested in weakly coupled systems of the form
\begin{equation}\label{wcoupled system}
H_i(x,Du_i)+\big(B(x)\uu(x)\big)_i=a_i\quad\hbox{in $\T^N$}\qquad\hbox{for every $i\in\ind$,}
\end{equation}
for some constant vector $\aaa=(a_1,\dots ,a_m)^T$, where $\uu(x)=\big(u_1(x),\dots,u_m(x)\big)^T$ and $\big(B(x)\uu(x)\big)_i$ denotes the $i$--th component of the vector $B(x)\uu(x)$, i.e. 
\[
\big(B(x)\uu(x)\big)_i=\sum_{j=1}^m b_{ij}(x)u_j(x). 
\]
\begin{oss}\label{oss monotone system}
The weakly coupled system \eqref{wcoupled system} is a particular type of {\em monotone system}, i.e. a system of the form
\[
 G_i\big(x,u_1(x),\dots,u_m(x),Du_i\big)=0\qquad\hbox{in $\T^N$}\qquad\hbox{for every $i\in\ind$,}
\]
where suitable monotonicity conditions  with respect to the $u_j$--variables are assumed on the functions $G_i$, see \cite{CamLey,Engler,Is92,IsKo,Len88}. In the specific case considered in this paper, the conditions assumed on the coupling matrix imply, in particular, that each function $G_i$ is strictly increasing in $u_i$ and non--increasing in $u_j$ for every $j\not=i$. This kind of monotonicity will be exploited in many points of the paper.
\end{oss}

Let $\uu\in\left(\D{C}(\T^N)\right)^m$. We will say that $\uu$ is a {\em viscosity subsolution} of \eqref{wcoupled system} if the following inequality holds for every $(x,i)\in\T^N\times\ind$
\begin{equation*}
H_i(x,p)+\big(B(x)\uu(x)\big)_i\leqslant a_i\quad\hbox{for every $p\in D^+ u_i(x)$.}
\end{equation*}
We will say that $\uu$ is a {\em viscosity supersolution} of \eqref{wcoupled system} if the following inequality holds for every $(x,i)\in\T^N\times\ind$
\begin{equation*}
H_i(x,p)+\big(B(x)\uu(x)\big)_i\geqslant a_i\quad\hbox{for every $p\in D^- u_i(x)$.}
\end{equation*}
We will say that $\uu$ is a {\em viscosity solution} if it is both a sub and a supersolution. In the sequel, solutions, subsolutions and supersolutions will be always meant in the viscosity sense, hence the adjective {\em viscosity} will be  omitted. 


Due to the convexity of the Hamitonian $H_i$, the following equivalences hold:

\begin{prop}\label{prop subsol equivalent def}
Let $a\in\R$, $i\in\ind$ and $\uu\in\left(\D{Lip}(\T^N)\right)^m$. The following facts are equivalent:\medskip
\begin{itemize}
 \item[\em (i)]\quad $H_i(x,p)+\big(B(x)\uu(x)\big)_i\leqslant a\qquad\ \ \quad\hbox{for every $p\in D^+ u_i(x)$ and $x\in\T^N$;}$\medskip
 \item[\em (ii)]\quad $H_i(x,p)+\big(B(x)\uu(x)\big)_i\leqslant a\qquad\ \ \quad\hbox{for every $p\in D^- u_i(x)$ and $x\in\T^N$;}$\medskip
\item[\em (iii)]\quad $H_i(x,p)+\big(B(x)\uu(x)\big)_i\leqslant a\qquad\ \ \quad\hbox{for every $p\in \partial^c u_i(x)$ and $x\in\T^N$;}$\medskip
\item[\em (iv)]\quad $H_i\big(x,Du_i(x)\big)+\big(B(x)\uu(x)\big)_i\leqslant a\quad\hbox{for a.e. $x\in\T^N$.}$\medskip
\end{itemize}
\end{prop}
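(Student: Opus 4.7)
The statement is the standard equivalence between viscosity, almost everywhere, superdifferential, subdifferential and Clarke generalized gradient formulations of the subsolution inequality for a convex Hamiltonian, adapted to our vectorial setting where the coupling term $\bigl(B(x)\uu(x)\bigr)_i$ plays the role of a continuous zeroth--order perturbation. The plan is to close the loop by establishing
\[
\text{(i)}\Rightarrow\text{(iv)},\qquad \text{(ii)}\Rightarrow\text{(iv)},\qquad \text{(iv)}\Rightarrow\text{(iii)},\qquad \text{(iii)}\Rightarrow\text{(i)}\text{ and }\text{(iii)}\Rightarrow\text{(ii)}.
\]

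For the implications from (iii) to (i) and (ii) nothing has to be done: in the Notation subsection it is explicitly recalled that $D^+u_i(x)\cup D^-u_i(x)\subseteq\partial^c u_i(x)$, so the inequality on the larger set transfers for free. The implications from (i) and (ii) to (iv) are equally immediate: since $u_i$ is Lipschitz, Rademacher's theorem provides a set $E_i\subset\T^N$ of full measure on which $u_i$ is differentiable, and at every $x\in E_i$ one has $Du_i(x)\in D^+u_i(x)\cap D^-u_i(x)$, so testing either (i) or (ii) with this $p$ gives (iv) on $E_i$.

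The only non--trivial step is (iv) $\Rightarrow$ (iii), which I would carry out in two stages. First, I show the inequality holds on $\partial^* u_i(x)$. Given $p\in\partial^* u_i(x)$, pick differentiability points $x_n\to x$ with $Du_i(x_n)\to p$; since $u_i$ is differentiable on a set of full measure, we may assume $x_n\in E_i$, so (iv) yields
\[
H_i\bigl(x_n,Du_i(x_n)\bigr)+\bigl(B(x_n)\uu(x_n)\bigr)_i\leqslant a\qquad\text{for every }n\in\N.
\]
Passing to the limit using continuity of $H_i$ on $\T^N\times\R^N$ and continuity of $x\mapsto\bigl(B(x)\uu(x)\bigr)_i$ (the latter because $B$ and $\uu$ are continuous) gives $H_i(x,p)+\bigl(B(x)\uu(x)\bigr)_i\leqslant a$. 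In the second stage I extend from $\partial^*u_i(x)$ to its closed convex hull $\partial^c u_i(x)$: if $p=\sum_{k}\theta_k p_k$ with $p_k\in\partial^*u_i(x)$ and $\theta_k\geqslant 0$, $\sum_k \theta_k=1$, then Jensen's inequality applied to the convex function $p\mapsto H_i(x,p)$ from hypothesis (H2) gives
\[
H_i(x,p)+\bigl(B(x)\uu(x)\bigr)_i\leqslant \sum_k \theta_k\Bigl(H_i(x,p_k)+\bigl(B(x)\uu(x)\bigr)_i\Bigr)\leqslant a,
\]
as required.

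The delicate point in this scheme is really only the first stage of (iv) $\Rightarrow$ (iii): one must be sure that at \emph{every} $x\in\T^N$ (not just almost every) one can approximate each reachable gradient by gradients at differentiability points, which is precisely the content of the definition of $\partial^* u_i(x)$ and follows from Lipschitz continuity of $u_i$ together with Rademacher. No use of strict convexity (H2)$'$ or of the degeneracy/irreducibility of $B$ is needed, only continuity of $B$ and convexity of $H_i$ in $p$; this explains why the statement is formulated for a general convex Hamiltonian.
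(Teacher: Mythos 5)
The paper itself gives no proof of this proposition (it is stated as a standard consequence of convexity), so your argument has to stand on its own. The scheme you choose --- (iii)$\Rightarrow$(i),(ii) by inclusion of the sub/superdifferentials in $\partial^c u_i(x)$, (i),(ii)$\Rightarrow$(iv) by Rademacher, and (iv)$\Rightarrow$(iii) via reachable gradients and convexity --- is the right one, and four of the five implications are correct as written.

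There is, however, a genuine gap in the first stage of (iv)$\Rightarrow$(iii), and it is not the point you flag at the end. Statement (iv) gives the inequality on a full-measure set $F_i$ which may be a \emph{proper} subset of the differentiability set $E_i$: ``a.e.''\ permits a null set of differentiability points at which the inequality fails. A reachable gradient $p\in\partial^* u_i(x)$ is, by definition, a limit of $Du_i(x_n)$ along \emph{some} sequence of differentiability points, and nothing guarantees these $x_n$ lie in $F_i$; hence the line ``(iv) yields $H_i(x_n,Du_i(x_n))+(B(x_n)\uu(x_n))_i\leqslant a$ for every $n$'' is unjustified as stated. The standard repair is Clarke's theorem that the generalized gradient is blind to null sets: for any null set $S$ one has $\partial^c u_i(x)=\mathrm{co}\,\{\lim_n Du_i(x_n)\,:\,x_n\to x,\ x_n\in E_i\setminus S\}$. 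Taking $S=E_i\setminus F_i$, every element of $\partial^c u_i(x)$ is a finite (by Carath\'eodory, the set of such limits being compact since $u_i$ is Lipschitz) convex combination of limits of gradients computed at points of $F_i$, where (iv) genuinely applies; continuity of $H_i$ and of $x\mapsto(B(x)\uu(x))_i$ together with convexity of $H_i(x,\cdot)$ then conclude exactly as in your second stage. Your closing remark attributes the delicacy to approximating reachable gradients by gradients at differentiability points, which is tautological from the definition of $\partial^*u_i(x)$; the real issue is approximating within the smaller set where the almost-everywhere inequality actually holds, and Rademacher alone does not deliver that.
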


Next, we state a proposition that will be needed in the sequel, see also \cite{Engler,Len88,Is92,IsKo} for similar results. 

\begin{prop}\label{prop Engler}
Let $\F$ be a subset of $\left(\D{C}(\T^N)\right)^m$ and define the functions $\underline\uu,\,\overline\uu$ on $\T^N$ by setting:
\[
 \underline u_i(x)=\inf_{\uu\in\F} u_i(x),\quad \overline u_i(x)=\sup_{\uu\in\F} u_i(x)\qquad\hbox{for every $x\in\T^N$ and $i\in\ind$.}
\]
Assume that $\underline\uu$ and $\overline\uu$ belong to $\left(\D{C}(\T^N)\right)^m$ and let $\aaa\in\R^m$. Then:\medskip
\begin{itemize}
 \item[\em (i)] if every $\uu\in\F$ is a subsolution of \eqref{wcoupled system}, then $\overline\uu$ is a subsolution of \eqref{wcoupled system};\medskip
\item[\em (ii)] if every $\uu\in\F$ is a supersolution of \eqref{wcoupled system}, then $\underline\uu$ is a supersolution of \eqref{wcoupled system}.\medskip
\end{itemize}
\end{prop}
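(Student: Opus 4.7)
The two items are symmetric (one via $D^+$, one via $D^-$), so I will focus on \emph{(i)}; item \emph{(ii)} follows by the obvious sign reversals. By Proposition~\ref{prop subsol equivalent def} (applied component-wise), it suffices to verify the subsolution inequality for test functions: fix $x_0\in\T^N$, $i\in\ind$, and a supertangent $\phi$ of $\overline u_i$ at $x_0$; after subtracting $d^{2}(x_0,\cdot)$, I may assume $\overline u_i-\phi$ has a \emph{strict} local maximum at $x_0$.

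The plan is the classical ``max-of-viscosity-subsolutions'' perturbation argument, adapted to the vectorial setting. First, by definition of $\overline u_i$, there is a sequence $\uu^n\in\F$ with $u^n_i(x_0)\to\overline u_i(x_0)$. Pick a closed ball $\overline B_r(x_0)$ small enough that the strict max holds on it, and let $x_n\in\overline B_r(x_0)$ realize $\max_{\overline B_r(x_0)}(u^n_i-\phi)$. A standard argument, using $u^n_i\leqslant\overline u_i$, continuity of $\overline u_i$ at $x_0$, and the strictness of the maximum, shows that $x_n\to x_0$ and $u^n_i(x_n)\to\overline u_i(x_0)$. Since $\phi$ is then a supertangent of $u^n_i$ at $x_n$ and $\uu^n$ is a subsolution of \eqref{wcoupled system},
\[
H_i\!\big(x_n,D\phi(x_n)\big)+\sum_{j=1}^{m}b_{ij}(x_n)\,u^n_j(x_n)\leqslant a_i.
\]

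The decisive step is passing to the limit in the coupling sum, and this is where the sign structure of $B(x)$ enters. For $j\neq i$ we have $b_{ij}(x_n)\leqslant 0$ (assumption~(B1)) and $u^n_j(x_n)\leqslant\overline u_j(x_n)$, hence $b_{ij}(x_n)\,u^n_j(x_n)\geqslant b_{ij}(x_n)\,\overline u_j(x_n)$. Combined with the convergence $u^n_i(x_n)\to\overline u_i(x_0)$ for the diagonal term, with the continuity of $\overline u_j$ (which gives $\overline u_j(x_n)\to\overline u_j(x_0)$), and with $b_{ij}(x_n)\to b_{ij}(x_0)$, this yields
\[
\liminf_{n}\sum_{j=1}^{m}b_{ij}(x_n)\,u^n_j(x_n)\geqslant\sum_{j=1}^{m}b_{ij}(x_0)\,\overline u_j(x_0).
\]
Continuity of $H_i$ together with $D\phi(x_n)\to D\phi(x_0)$ then gives $H_i\big(x_0,D\phi(x_0)\big)+\sum_j b_{ij}(x_0)\overline u_j(x_0)\leqslant a_i$, proving \emph{(i)}.

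For \emph{(ii)}, I would argue symmetrically: take a strict subtangent $\phi$ of $\underline u_i$ at $x_0$, produce $\uu^n\in\F$ and $x_n\to x_0$ minimizing $u^n_i-\phi$ with $u^n_i(x_n)\to\underline u_i(x_0)$, and now use $u^n_j(x_n)\geqslant\underline u_j(x_n)$ together with $b_{ij}(x_n)\leqslant 0$ for $j\neq i$ to obtain $\limsup_n\sum_j b_{ij}(x_n)u^n_j(x_n)\leqslant\sum_j b_{ij}(x_0)\underline u_j(x_0)$, from which the supersolution inequality follows. The only delicate point in the whole argument is the one highlighted above: controlling the $j\neq i$ terms in the coupling sum in the correct direction. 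The hypotheses (B1) on the sign pattern of $B(x)$ are precisely what makes this work, and the same monotonicity (cf.\ Remark~\ref{oss monotone system}) is what causes the ordering $u^n_j\leqslant\overline u_j$ (resp.\ $\geqslant\underline u_j$) to translate into an inequality of the right sign on the coupling term, so no use of (B2) is actually needed here.
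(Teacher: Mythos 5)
Your proof is correct and is exactly the argument the paper has in mind: the paper omits the details, stating only that the result ``can be easily recovered by arguing similarly [to the scalar case in Section 2.6 of Barles' book] and by exploiting the monotonicity of the system,'' which is precisely your max-of-subsolutions perturbation combined with the sign condition $b_{ij}\leqslant 0$ for $j\not=i$ to control the coupling term. Your closing observation that only the off-diagonal sign pattern (and not the degeneracy (B2)) is used is also accurate.
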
 

The previous proposition is analogous to a well known fact for scalar Hamilton--Jacobi equations, see for instance Section 2.6 in \cite{barles}. The proof can be easily recovered by arguing similarly and by exploiting the monotonicity of the system.\medskip\par


We will be also interested in the evolutionary counterpart of \eqref{wcoupled system}, i.e. the system 
\begin{equation}\label{evo wcoupled system}
\frac{\partial u_i}{\partial t}+H_i(x,D_xu_i)+\big(B(x)\uu(t,x)\big)_i=0\quad\hbox{in $(0,+\infty)\times\T^N$}\qquad\hbox{$\forall i\in\ind$,}
\end{equation}
where we have denoted by $\uu(t,x)=\big(u_1(t,x),\dots,u_m(t,x)\big)^T$.

The following comparison result holds, see for instance \cite{CamLey} for a proof.

\begin{prop}
Let $T>0$ and $\vv,\,\uu\in\big(\D{Lip}([0,T]\times\T^N)\big)^m$ be, respectively, a sub and a supersolution of \eqref{evo wcoupled system}. Then, for every $i\in\ind$, 
\[
 v_i(t,x)-u_i(t,x)\leqslant\max_{1\leqslant i \leqslant m}\,\max_{\T^N} \big(v_i(0,\cdot)-u_i(0,\cdot)\big),\qquad\hbox{$(t,x)\in [0,T]\times\T^N$}.
\]
\end{prop}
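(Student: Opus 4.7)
The plan is the classical viscosity comparison argument --- contradiction, time-penalization and doubling of variables --- adapted to the vectorial setting by isolating the index $i_0$ that attains the maximum and exploiting the monotonicity and degeneracy of $B$. Set $M_0:=\max_{1\leqslant i\leqslant m}\max_{x\in\T^N}\bigl(v_i(0,x)-u_i(0,x)\bigr)$ and suppose, for contradiction, that $v_{i_*}(t_*,x_*)-u_{i_*}(t_*,x_*)>M_0$ at some $(i_*,t_*,x_*)\in\ind\times(0,T]\times\T^N$. Pick $\epsilon>0$ small enough so that still $v_{i_*}(t_*,x_*)-u_{i_*}(t_*,x_*)-\epsilon t_*>M_0$, and choose $(i_0,t_0,x_0)$ maximizing $(i,t,x)\mapsto v_i(t,x)-u_i(t,x)-\epsilon t$ over $\ind\times[0,T]\times\T^N$. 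The choice of $\epsilon$ together with the definition of $M_0$ forces $t_0>0$.

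For $\delta>0$ small I would then maximize
$$\Phi_\delta(t,s,x,y):=v_{i_0}(t,x)-u_{i_0}(s,y)-\frac{(t-s)^2+|x-y|^2}{2\delta}-\epsilon t$$
on $[0,T]^2\times(\T^N)^2$ at a point $(t_\delta,s_\delta,x_\delta,y_\delta)$. Comparing the value of $\Phi_\delta$ at this maximizer with its value at $(t_\delta,t_\delta,x_\delta,x_\delta)$ and using the Lipschitz continuity of $u_{i_0}$ gives $|t_\delta-s_\delta|,|x_\delta-y_\delta|=O(\delta)$, so that $p_\delta:=(x_\delta-y_\delta)/\delta$ and $(t_\delta-s_\delta)/\delta$ stay bounded, $(t_\delta,s_\delta,x_\delta,y_\delta)\to(t_0,t_0,x_0,x_0)$ up to a subsequence, and $t_\delta,s_\delta>0$ for all $\delta$ small. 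Writing the sub-solution condition for $\vv$ in the variables $(t,x)$ and the super-solution condition for $\uu$ in $(s,y)$ for the $i_0$-th equation, with test functions coming from the penalty, subtracting, and letting $\delta\to 0$ (the Hamiltonian difference $H_{i_0}(x_\delta,p_\delta)-H_{i_0}(y_\delta,p_\delta)$ vanishes by continuity on compact sets), I obtain
$$\epsilon+\sum_{j=1}^m b_{i_0 j}(x_0)\bigl(v_j(t_0,x_0)-u_j(t_0,x_0)\bigr)\leqslant 0.$$

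To close, the global maximality of $(i_0,t_0,x_0)$ yields $(v_j-u_j)(t_0,x_0)\leqslant (v_{i_0}-u_{i_0})(t_0,x_0)$ for every $j\in\ind$. Combining this with $b_{i_0 j}(x_0)\leqslant 0$ for $j\neq i_0$ and the degeneracy identity $b_{i_0 i_0}(x_0)=\sum_{j\neq i_0}|b_{i_0 j}(x_0)|$, I estimate
$$\sum_{j=1}^m b_{i_0 j}(x_0)(v_j-u_j)(t_0,x_0)\geqslant \Bigl(b_{i_0 i_0}(x_0)-\sum_{j\neq i_0}|b_{i_0 j}(x_0)|\Bigr)\bigl(v_{i_0}(t_0,x_0)-u_{i_0}(t_0,x_0)\bigr)=0,$$
which contradicts $\epsilon>0$. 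The main technical obstacle is ensuring that $(t_\delta,s_\delta)$ remains bounded away from zero so that the viscosity inequalities apply at interior points of the time interval; the time penalty $-\epsilon t$ is introduced precisely to enforce this, via the gap with $M_0$. Once this localization is secured, the coupling-matrix algebra at the maximizing component --- which is exactly the algebraic content of the maximum principle for monotone systems --- does the rest.
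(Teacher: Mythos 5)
The paper offers no proof of this proposition, deferring to \cite{CamLey}; your argument is precisely the standard comparison proof for weakly coupled monotone systems --- contradiction with a time penalization, doubling of variables in the single component $i_0$ that attains the maximum, and then the sign/row-sum structure of $B(x_0)$ together with the maximality of the index $i_0$ to kill the coupling term --- and it is correct. The one loose end is that the penalty $-\epsilon t$ does not prevent $t_\delta$ or $s_\delta$ from equalling $T$, where the viscosity inequalities are not a priori stated; this is repaired either by invoking the standard lemma that Lipschitz sub/supersolutions of the evolution equation satisfy the corresponding inequalities up to the terminal time, or by replacing the penalty $-\epsilon t$ with $-\epsilon/(T-t)$ so that maximizers stay in $(0,T)$.
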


By making use of this proposition and of Perron's method, it is then easy to prove the following 

\begin{prop}\label{prop existence evo wcs}
Let $\uu_0\in \big(\D{Lip}(\T^N)\big)^m$. Then there exists a unique function $\uu(t,x)$ in $\big(\D{Lip}(\R_+\times\T^N)\big)^m$ that solves the system \eqref{evo wcoupled system} subject to the initial condition $\uu(0,x)=\uu_0(x)$ in $\T^N$. Moreover, the Lipschitz constant of $\uu(t,x)$ in $\R_+\times\T^N$ only depends on the Hamiltonians $H_1,\dots,H_m$ and on the Lipschitz constant of $\uu_0$.\medskip
\end{prop}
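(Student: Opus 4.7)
The plan splits naturally into three tasks: uniqueness, existence, and the uniform Lipschitz estimate. Uniqueness is immediate from the comparison principle stated right above, applied in both directions to any two Lipschitz solutions sharing the datum $\uu_0$ on each finite time interval $[0,T]$.

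For existence, I first build global barriers that agree with $\uu_0$ at $t=0$. Let $L$ be a Lipschitz constant of $\uu_0$ and set
\[
C:=\max_{i\in\ind}\sup_{x\in\T^N,\,|p|\leqslant L}|H_i(x,p)|+\max_{i\in\ind}\sup_{x\in\T^N}\big|\big(B(x)\uu_0(x)\big)_i\big|.
\]
The degeneracy hypothesis (B2) gives $B(x)\1=0$ for every $x\in\T^N$, so the vertical shift $\pm Ct\,\1$ contributes nothing to the coupling term and the functions $\uu^\pm(t,x):=\uu_0(x)\pm Ct\,\1$ are, respectively, a super- and a subsolution of \eqref{evo wcoupled system} with $\uu^\pm(0,\cdot)=\uu_0$: indeed, Proposition \ref{prop subsol equivalent def} combined with the choice of $C$ handles the inequality at almost every differentiability point of $\uu_0$. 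Perron's method, adapted to monotone systems as in \cite{Engler,IsKo}, then produces a continuous viscosity solution $\uu$ of \eqref{evo wcoupled system} with $\uu(0,\cdot)=\uu_0$ and $\uu^-\leqslant\uu\leqslant\uu^+$.

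The Lipschitz estimate is obtained in two steps. First, from the bound $\|\uu(h,\cdot)-\uu_0\|_\infty\leqslant Ch$ provided by the barriers, uniqueness and the comparison principle applied to the translated solution $(t,x)\mapsto\uu(t+h,x)$ yield
\[
\|\uu(t+h,\cdot)-\uu(t,\cdot)\|_\infty\leqslant Ch\qquad\hbox{for every $t,h\geqslant 0$,}
\]
so $\uu$ is $C$-Lipschitz in time uniformly in $x$. Second, freezing time and using this bound, each component $u_i(t_0,\cdot)$ satisfies, in the viscosity sense on $\T^N$, the scalar Hamilton--Jacobi inequalities $\pm\big(H_i(x,D_xu_i)+(B(x)\uu(t_0,x))_i\big)\leqslant C$: this is obtained by testing \eqref{evo wcoupled system} against space-time functions of the form $\phi(x)\pm C(t-t_0)$ and then invoking Proposition \ref{prop subsol equivalent def}. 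Since $(B(x)\uu(t_0,x))_i$ is bounded in terms of $\|\uu_0\|_\infty$ and $C$, the coercivity assumption (H3) turns these into a uniform $L^\infty$-bound on $|D_xu_i|$ depending only on the Hamiltonians and on $L$.

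I expect the main obstacle to be the clean passage from the time Lipschitz estimate to the space Lipschitz estimate, since the solution produced by Perron's method is only continuous a priori. The underlying technical fact is classical for scalar equations: a continuous viscosity subsolution of a coercive Hamilton--Jacobi inequality $|H_i(x,Du)|\leqslant K$ is automatically Lipschitz, with a constant governed by $K$ and the coercivity function $\alpha$ of (H3); this extends componentwise to the present system once the monotonicity of the coupling and Proposition \ref{prop subsol equivalent def} are used to decouple the spatial regularity of each $u_i$ from the others.
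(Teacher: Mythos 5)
Your proposal is correct and follows essentially the route the paper indicates (it only sketches the proof as ``comparison principle plus Perron's method''): barriers $\uu_0\pm Ct\,\1$ using $B(x)\1=0$, Perron for existence, comparison for uniqueness and the time-Lipschitz bound, then coercivity (H3) for the spatial bound. The only cosmetic slip is invoking Proposition \ref{prop subsol equivalent def} for the \emph{super}solution barrier, where the a.e.\ characterization does not apply; but the needed inequality holds anyway since every $p\in D^-u_{0,i}(x)$ satisfies $|p|\leqslant L$.
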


We will denote by $\S(t)\uu_0(x)$ the solution $\uu(t,x)$ of \eqref{evo wcoupled system} with initial datum $\uu_0$. This defines,  for every $t>0$, a map 
\[
\S(t):\big(\D{Lip}(\T^N)\big)^m\to \big(\D{Lip}(\T^N)\big)^m.
\]
We summarize in the next proposition the properties enjoyed by such maps, which come as an easy application of the above results.

\begin{prop}\label{prop semigroup}
For every $t,s>0$ and $\uu,\vv\in \big(\D{Lip}(\T^N)\big)^m$ we have:
\begin{itemize}
 \item[\em (i)]{\bf (Semigroup property)}\quad $\S(s)\big(\S(t)\uu\big)=\S(t+s)\uu$ \quad in $\T^N$;\medskip
 \item[\em (ii)]{\bf (Monotonicity)}\quad if \quad $\vv\leqslant\uu$ \quad in $\T^N$, then\quad  $\S(t)\vv\leqslant\S(t)\uu$\quad  in $\T^N$;\medskip
\item[\em (iii)]{\bf (Non--expansiveness property)}\quad $\displaystyle{\|\S(t)\vv-\S(t)\uu\|_\infty\leqslant\|\vv-\uu\|_\infty}$;\medskip
\item[\em (iv)]  for every $a\in\R$, \quad $\S(t)(\uu+a\1)=\S(t)\uu+a\1$\quad  in $\T^N$.\medskip
\end{itemize}
\end{prop}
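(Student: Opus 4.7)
The four properties are all direct consequences of the comparison result stated just above (applied in various ways), together with the uniqueness part of Proposition \ref{prop existence evo wcs} and the degeneracy assumption (B2). The plan is to verify them in the order (ii), (iii), (i), (iv), since (ii)--(iii) are immediate, while (i) and (iv) require a short additional argument based on uniqueness of the Cauchy problem.

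\textbf{Monotonicity (ii) and non--expansiveness (iii).} Both $\S(t)\vv$ and $\S(t)\uu$ are Lipschitz solutions of \eqref{evo wcoupled system} on $[0,T]\times\T^N$ for every $T>0$, hence in particular the former is a subsolution and the latter is a supersolution. Applying the comparison principle stated before Proposition \ref{prop existence evo wcs} to this pair, one gets
\[
\bigl(\S(t)\vv\bigr)_i(x)-\bigl(\S(t)\uu\bigr)_i(x)\leqslant \max_{1\leqslant j\leqslant m}\max_{y\in\T^N}\bigl(v_j(y)-u_j(y)\bigr)
\]
for every $(t,x)\in[0,T]\times\T^N$ and $i\in\ind$. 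If $\vv\leqslant\uu$ in $\T^N$ the right--hand side is non--positive, which is (ii); for (iii) one applies the same inequality with the roles of $\uu,\vv$ exchanged, obtaining in both directions a bound by $\|\uu-\vv\|_\infty$.

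\textbf{Semigroup property (i).} Set $\ww(t,x):=\S(s+t)\uu(x)$ and $\tilde\ww(t,x):=\S(t)\bigl(\S(s)\uu\bigr)(x)$. By Proposition \ref{prop existence evo wcs} both functions belong to $\bigl(\D{Lip}(\R_+\times\T^N)\bigr)^m$ and satisfy \eqref{evo wcoupled system}; moreover they share the initial datum $\S(s)\uu$ at $t=0$. The uniqueness part of Proposition \ref{prop existence evo wcs} gives $\ww\equiv\tilde\ww$, which is exactly the claimed identity.

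\textbf{Invariance under constants (iv).} The idea is to use (B2). Set $\ww(t,x):=\S(t)\uu(x)+a\1$. Clearly $\ww\in\bigl(\D{Lip}(\R_+\times\T^N)\bigr)^m$ and $\ww(0,\cdot)=\uu+a\1$. Moreover, for every $i\in\ind$,
\[
\frac{\partial w_i}{\partial t}+H_i(x,D_x w_i)+\bigl(B(x)\ww(t,x)\bigr)_i
=\frac{\partial (\S(t)u_i)}{\partial t}+H_i(x,D_x \S(t)u_i)+\bigl(B(x)\S(t)\uu\bigr)_i+a\bigl(B(x)\1\bigr)_i,
\]
and the last term vanishes because $B(x)\1=0$ in view of the degeneracy assumption (B2). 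Thus $\ww$ solves \eqref{evo wcoupled system} with initial datum $\uu+a\1$, and uniqueness gives $\ww=\S(t)(\uu+a\1)$, which proves (iv).

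No step is genuinely difficult; the only point worth stressing is that (iv) is the one place where the degeneracy hypothesis (B2) enters explicitly, ensuring that the constant vector $a\1$ is annihilated by the coupling matrix.
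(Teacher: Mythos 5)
Your proof is correct and follows exactly the route the paper intends: the paper omits the proof, stating only that the properties ``come as an easy application of the above results,'' namely the comparison principle and the existence--uniqueness statement of Proposition \ref{prop existence evo wcs}, which is precisely what you use. The only implicit points worth noting are that the time-shift argument in (i) relies on the system being autonomous, and that the pointwise computation in (iv) should formally be read in the viscosity sense (harmless, since adding a constant does not change sub/superdifferentials); both are routine.
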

  
The fact that the coupling matrix $B(x)$ is everywhere degenerate is crucial for assertion {\em (iv)}.\medskip 

\end{subsection}
\end{section}

\begin{section}{The critical value}\label{Critical v}

The purpose of this section is to  define the notion of critical value for weakly coupled systems and to prove some relevant properties of the corresponding critical system. 

We start by proving some {\em a priori} estimates for the subsolutions of a weakly coupled system of the form \eqref{wcoupled system}. The following notation will be assumed throughout the section:
\[
 \mu_i=\min_{(x,p)} H_i(x,p)\quad\hbox{for each $i\in\ind$,}\qquad \mu=\min_{i\in\ind}\mu_i.
\]

\begin{prop}\label{prop compactness}
Let $\aaa=(a_1,\dots,a_m)^T\in\R^m$ and $\uu \in \left(\D{C}(\T^N)\right)^m$ such that
\begin{equation}\label{eq compactness}
 \big(B(x)\uu (x)\big)_i\leqslant a_i\qquad\hbox{for every $x\in\T^N$ and $i\in\ind$.}
\end{equation}
Then there exists a constant $M_\aaa$ only depending on $\aaa$ and $B(x)$ such that
\begin{itemize}
 \item[\em (i)]\quad $\|u_i-u_j\|_\infty\leqslant M_\aaa\ \quad\qquad\hbox{for every $i,\,j\in\ind$;}$\medskip
\item[\em (ii)]\quad $\big|\big(B(x)\uu (x)\big)_i \big|\leqslant M_\aaa\qquad\hbox{for every $x\in\T^N$ and $i\in\ind$.}$
\end{itemize}
\end{prop}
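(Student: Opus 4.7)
The plan is to first rewrite the hypothesis (\ref{eq compactness}) in a form that reveals the oscillation of $\uu$, then to establish (i) via a compactness--plus--rescaling argument, and to deduce (ii) at once. The degeneracy condition (B2) gives $b_{ii}(x)=\sum_{j\neq i}|b_{ij}(x)|$, so (\ref{eq compactness}) is equivalent to
\[
\sum_{j\neq i} |b_{ij}(x)|\,\big(u_i(x)-u_j(x)\big)\leqslant a_i\qquad\hbox{for every $x\in\T^N$ and $i\in\ind$.}
\]
In particular $\big(B(x)\uu(x)\big)_i=\sum_{j\neq i}|b_{ij}(x)|\big(u_i(x)-u_j(x)\big)$, which expresses the quantity to be controlled in (ii) directly in terms of the oscillations of $\uu$.

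For (i) I would argue by contradiction. If no such $M_\aaa$ existed, there would be a sequence $(\uu^n)_n\subset\left(\D{C}(\T^N)\right)^m$ satisfying (\ref{eq compactness}) and points $x^n\in\T^N$ with
\[
\sigma_n:=\max_i u^n_i(x^n)-\min_i u^n_i(x^n)\to+\infty.
\]
By compactness of $\T^N$ and finiteness of $\ind$, after extraction I may assume $x^n\to x^*$ and that a single index $\ell$ realizes $\min_i u^n_i(x^n)$ for every $n$. Setting
\[
\vv^n:=\sigma_n^{-1}\big(\uu^n(x^n)-u^n_\ell(x^n)\1\big)\in[0,1]^m,
\]
one has $v^n_\ell=0$ and $\max_i v^n_i=1$. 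Since $B(x^n)\1=0$, dividing the rewritten inequality at $x=x^n$ by $\sigma_n$ yields $(B(x^n)\vv^n)_i\leqslant a_i/\sigma_n$. A further extraction gives $\vv^n\to\vv^*\in[0,1]^m$ with $v^*_\ell=0$, $\max_i v^*_i=1$, and, by continuity of $B$, $B(x^*)\vv^*\leqslant 0$.

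A maximum--principle argument in the style of Proposition \ref{prop Ker} then concludes. Picking $k$ with $v^*_k=1$, the inequality $(B(x^*)\vv^*)_k\leqslant 0$ reads $\sum_{j\neq k}|b_{kj}(x^*)|(1-v^*_j)\leqslant 0$; all summands being non--negative, each must vanish, so $v^*_j=1$ for every $j$ with $b_{kj}(x^*)\neq 0$. Iterating on the set $\{j:v^*_j=1\}$ and invoking the irreducibility of $B(x^*)$ exactly as in the proof of Proposition \ref{prop Ker} forces $v^*_j=1$ for every $j\in\ind$, in contradiction with $v^*_\ell=0$.

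Item (ii) now follows at once from the formula $(B(x)\uu(x))_i=\sum_{j\neq i}|b_{ij}(x)|(u_i(x)-u_j(x))$: using (i) and the continuity of the $b_{ij}$ on the compact torus, one gets
\[
\big|(B(x)\uu(x))_i\big|\leqslant m\,\max_{x,i,j}|b_{ij}(x)|\cdot\max_{i,j}\|u_i-u_j\|_\infty,
\]
and enlarging $M_\aaa$ to encompass both bounds yields the claim. The main delicate point is ensuring in (i) that the rescaled limit $\vv^*$ is \emph{nontrivial}, i.e.\ that the conditions $v^*_\ell=0$ and $\max_i v^*_i=1$ actually survive the limit; once this is granted, the irreducibility does the rest.
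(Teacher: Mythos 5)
Your proof is correct, but it takes a genuinely different route from the paper's. The paper argues directly and quantitatively: after reducing to $\aaa=a\1$, it fixes $x$, orders the components $u_1(x)\leqslant\dots\leqslant u_m(x)$, bounds the gap between the two largest by $a/\beta_\star$ using the degenerate row condition and the positivity of the diagonal (Proposition \ref{prop positive diagonal}), and then performs an induction on $m$ by merging the two largest components into a single unknown, the reduced coupling matrix remaining irreducible and degenerate; this produces an explicit constant $M_\aaa$ in terms of $a$, $\beta_\star$, $\beta^\star$ and $m$. You instead run a compactness--rescaling contradiction: normalizing the oscillation to $1$ and passing to a limit point $x^*$, you reduce the whole statement to the maximum--principle argument already used for Proposition \ref{prop Ker}, applied to the inequality $B(x^*)\vv^*\leqslant 0$ rather than to the equality $B\vv=0$. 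Your approach is shorter and conceptually cleaner (it isolates the single linear--algebraic fact that an irreducible degenerate coupling matrix admits no nonconstant vector $\vv$ with $B\vv\leqslant 0$ at a point where the maximal component is attained), at the price of a non--constructive constant; since the paper only ever uses the existence of $M_\aaa$, nothing is lost. You correctly identify and handle the one delicate point, namely that $v^*_\ell=0$ and $\max_i v^*_i=1$ survive the limit (which follows after a further extraction fixing the extremal indices). Your derivation of (ii) from the identity $\big(B(x)\uu(x)\big)_i=\sum_{j\neq i}|b_{ij}(x)|\big(u_i(x)-u_j(x)\big)$ is also fine and in fact yields the two--sided bound from (i) alone, whereas the paper combines (i) with the hypothesis \eqref{eq compactness} for the upper bound.
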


\begin{dimo}
It suffices to prove the assertion for $\aaa=a\,\1$.
Let us set
\[
\beta_\star=\min_{1\leqslant i \leqslant m}\min_{x\in\T^N} b_{ii}(x),\qquad \beta^\star= \max_{1\leqslant i,j \leqslant m}\max_{x\in\T^N} |b_{ij}(x)|.
\]
Such  quantities are finite valued. Moreover, $\beta_\star$ is strictly positive in view of Proposition \ref{prop positive diagonal} and of the fact that $B(x)$ is, for every $x\in\T^N$, an irreducible coupling matrix with continuous coefficients.

Let us now fix $x\in\T^N$ and assume, without any loss of generality,
\begin{equation}\label{eq ordering}
u_1(x)\leqslant u_2(x)\leqslant\dots \leqslant u_m(x).
\end{equation}
First notice that, by subtracting   $\sum\limits_{j=1}^m b_{mj}(x)u_m(x)=0$ from both sides of equation \eqref{eq compactness} with $i=m$, one gets
\[
 \sum_{j\not=m}-b_{mj}(x)\big(u_m(x)-u_j(x)\big)\leqslant a,
\]
yielding
\[
\Big(u_m(x)-\max_{j\not=m} u_j(x)\Big) \sum_{j\not=m}-b_{mj}(x)\leqslant a.
\]
By exploiting \eqref{eq ordering} and the degenerate character of the matrix $B(x)$ we get
\begin{equation}\label{eq step}
0\leqslant u_m(x)-u_{m-1}(x)\leqslant\frac{a}{b_{mm}(x)}\leqslant\frac{a}{\beta_\star}\medskip.
\end{equation}
This proves assertion {\em (i)} when $m=2$. To prove it in the general case, we argue by induction: we assume the result true for $m$ and we prove  it  for $m+1$. To this aim, we restate equation \eqref{eq compactness} as
\begin{equation*}
 \sum_{j=1}^{m-1}b_{ij}(x)u_j(x)+\Big(b_{im}(x)+b_{i\, m+1}(x)\Big)u_m(x)+b_{i\, m+1}(x)\Big(u_{m+1}(x)-u_m(x)\Big)\leqslant a,
\end{equation*}
then we exploit \eqref{eq step} to get
\begin{equation}\label{eq system}
 \sum_{j=1}^{m-1}b_{ij}(x)u_j(x)+\Big(b_{im}(x)+b_{i\, m+1}(x)\Big)u_m(x)\leqslant a\left(1+\frac{\beta^\star}{\beta_\star}\right)
\end{equation}
for every $i\in\{1,\dots,m+1\}$. The irreducible character of $B(x)$ applied to the set $\I=\{m,m+1\}$ implies that 
\[
 b_{im}(x)+b_{i\, m+1}(x)>0
\]
for either $i=m$ or $i=m+1$, let us say $i=m$ for definitiveness. Assertion {\em (i)} now follows by applying the induction hypothesis to the system given by \eqref{eq system} with $i$ varying in $\ind$, the corresponding coupling matrix being still irreducible and degenerate.

To prove {\em (ii)} it suffices to note that, for every $i\in\ind$,
\begin{multline*}
-\big(B(x)\uu (x)\big)_i= -b_{ii}(x)u_i(x)+\sum_{j\not=i}\big(-b_{ij}(x)\big)u_j(x)\\
\quad\leqslant -b_{ii}(x)u_i(x) +\,\sum_{j\not=i} -b_{ij}(x)\big(u_i(x) + \|u_i-u_j\|_\infty \big) \\
\leqslant (m-1)\,\beta^\star\|u_i-u_j\|_\infty,
\end{multline*}
and the assertion follows from {\em (i)} and from hypothesis \eqref{eq compactness}.
\end{dimo}

\medskip
As a consequence, we derive the following result:

\begin{prop}\label{prop a priori Lip}
Let $\uu=(u_1,\dots,u_m)^T \in \left(\D{C}(\T^N)\right)^m$ be a  subsolution of
\eqref{wcoupled system} for some $\aaa\in\R^m$.
Then there exist  constants $C_\aaa$ and $\kappa_\aaa$, only depending on $\aaa$, on the Hamiltonians $H_1,\dots, H_{m}$ and on the coupling matrix $B(x)$, such that
\begin{itemize}
 \item[\em (i)]  \quad $\|u_i-u_j\|_\infty\leqslant C_\aaa\ \quad\qquad\hbox{for every $i,\,j\in\ind$;}$\medskip
 \item[\em (ii)] \quad $\uu$ is $\kappa_\aaa$--Lipschitz continuous in $\T^N$.
\end{itemize}
\end{prop}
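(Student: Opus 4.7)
The proof will reduce the statement to Proposition \ref{prop compactness} combined with the coercivity assumption (H3). The first step is to show that the subsolution inequality forces a pointwise upper bound on $(B(x)\uu(x))_i$ that depends only on $\aaa$ and the Hamiltonians. Fix $i\in\ind$ and $x_0\in\T^N$. For each $\epsilon>0$, consider the continuous function $x\mapsto u_i(x)-\epsilon^{-1}d(x,x_0)^2$ on $\T^N$; it attains its maximum at some $x_\epsilon$, and $x_\epsilon\to x_0$ as $\epsilon\to 0^+$ by compactness. The smooth test function $\phi(x):=\epsilon^{-1}d(x,x_0)^2$ is a supertangent to $u_i$ at $x_\epsilon$, so the subsolution condition yields
\[
H_i\bigl(x_\epsilon,D\phi(x_\epsilon)\bigr)+\bigl(B(x_\epsilon)\uu(x_\epsilon)\bigr)_i\leqslant a_i,
\]
whence $\bigl(B(x_\epsilon)\uu(x_\epsilon)\bigr)_i\leqslant a_i-\mu_i$. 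Passing to the limit $\epsilon\to 0^+$ and using continuity of $B(\cdot)\uu(\cdot)$, we deduce
\[
\bigl(B(x_0)\uu(x_0)\bigr)_i\leqslant a_i-\mu_i \qquad\hbox{for every }x_0\in\T^N\hbox{ and }i\in\ind.
\]

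Next, I would apply Proposition \ref{prop compactness} with $\aaa$ replaced by the vector $\tilde\aaa:=(a_1-\mu_1,\dots,a_m-\mu_m)^T$: the resulting constant $M_{\tilde\aaa}$ depends only on $\aaa$, the Hamiltonians (through $\mu_i$) and $B(x)$. This immediately gives assertion \emph{(i)} by setting $C_\aaa:=M_{\tilde\aaa}$, and provides in addition the uniform bound
\[
\bigl|\bigl(B(x)\uu(x)\bigr)_i\bigr|\leqslant C_\aaa \qquad\hbox{for every }x\in\T^N,\ i\in\ind.
\]

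Plugging this bound back into the subsolution inequality, we get for every $i\in\ind$, every $x\in\T^N$ and every $p\in D^+u_i(x)$
\[
H_i(x,p)\leqslant a_i+C_\aaa.
\]
Using the coercive lower bound $\alpha(|p|)\leqslant H_i(x,p)$ from (H3), this translates into a uniform estimate $|p|\leqslant R_\aaa$ for the superdifferentials of the components $u_i$, where $R_\aaa$ depends only on $\aaa$, $C_\aaa$ and $\alpha$. By the standard viscosity-theoretic fact that a continuous function whose viscosity superdifferential is uniformly bounded by $R$ is $R$--Lipschitz (this is routinely proved via sup-convolution on the torus, exploiting semiconvexity and the a.e.\ attainment of superdifferentials), each $u_i$ is $R_\aaa$--Lipschitz, yielding assertion \emph{(ii)} with $\kappa_\aaa:=R_\aaa$.

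The only delicate point is the first step: the subsolution inequality is a statement only at points where $D^+u_i\ne\varnothing$, and \emph{a priori} $\uu$ is only assumed continuous, so one cannot argue a.e. The perturbation by $\epsilon^{-1}d(\cdot,x_0)^2$ takes care of this by producing supertangents at points arbitrarily close to any prescribed $x_0$; continuity of $B(\cdot)\uu(\cdot)$ then transfers the bound to $x_0$ itself.
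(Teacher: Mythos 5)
Your proof is correct and follows essentially the same route as the paper's: reduce to Proposition \ref{prop compactness} via the pointwise bound $\big(B(x)\uu(x)\big)_i\leqslant a_i-\mu_i$ obtained from the subsolution inequality and the lower bound on $H_i$, then conclude Lipschitz continuity from coercivity. The only difference is cosmetic: you spell out, via the penalization by $\eps^{-1}d(\cdot,x_0)^2$, why the viscosity inequality yields the bound at \emph{every} point, a step the paper leaves implicit in the phrase ``hold in the viscosity sense.''
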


\begin{dimo}
For each $i\in\ind$, the following inequalities hold in the viscosity sense:
\[
\mu + \big(B(x)\uu(x)\big)_i \leqslant H_i(x,Du_i)+\big(B(x)\uu(x)\big)_i \leqslant a_i\qquad\hbox{in $\T^N$},
\]
yielding
\[
\big(B(x)\uu(x)\big)_i\leqslant a_i-\mu\qquad\hbox{for every $x\in\T^N$}.
\]
In view of Proposition \ref{prop compactness} we get {\em (i)} and 
\[
\big|\big(B(x)\uu (x)\big)_i \big|\leqslant M_\aaa\qquad\hbox{for every $x\in\T^N$.}
\]
Plugging this inequality in \eqref{wcoupled system} we derive that $u_i$ is a viscosity subsolution of
\[
 H_i(x,Du_i)\leqslant a_i+M_\aaa\qquad\hbox{in $\T^N$}
\]
and assertion {\em (ii)} follows as well via a standard argument that exploits the coercivity of $H_i(x,p)$ in $p$, see for instance \cite{barles}.\medskip
\end{dimo}

Next, we establish a remarkable property of weakly coupled systems. 

\begin{prop}\label{prop pre-comparison}
Assume that $\vv,\,\uu\in\left(\D C(\T^N)\right)^m$ are, respectively a sub and a supersolution of the weakly coupled system \eqref{wcoupled system} for some $\aaa\in\R^m$. Let $x_0\in\T^N$ be such that  
\begin{equation*}
v_i(x_0)-u_i(x_0)=M:=\max_{1\leqslant i\leqslant m}\max_{\T^N}\, (v_i-u_i)\qquad\hbox{for some $i\in\ind$}.
\end{equation*}
Then $\vv(x_0)=\uu(x_0)+M\1$.
\end{prop}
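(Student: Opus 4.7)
The plan is to combine a standard variable-doubling argument with the algebra of the coupling matrix. Because $\uu$ is only assumed continuous, $D^-u_i(x_0)$ could be empty, so I cannot plug a common test gradient directly into the sub- and supersolution inequalities. Instead, for $\eps>0$ I introduce
\[
\Phi_\eps(x,y) := v_i(x) - u_i(y) - \frac{|x-y|^2}{2\eps} - |x-x_0|^2 - |y-x_0|^2
\]
on $\T^N\times\T^N$ and pick a maximizer $(x_\eps,y_\eps)$. Comparing $\Phi_\eps(x_\eps,y_\eps)\ge\Phi_\eps(x_0,x_0)=M$ and using continuity of $u_i$ yields the usual estimates $|x_\eps-y_\eps|\to 0$ and $|x_\eps-y_\eps|^2/\eps\to 0$, together with $x_\eps,y_\eps\to x_0$ (the latter thanks to the localizing penalties).

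Setting $p_\eps := (x_\eps-y_\eps)/\eps$, maximality of $(x_\eps,y_\eps)$ gives $p_\eps + 2(x_\eps-x_0)\in D^+v_i(x_\eps)$ and $p_\eps - 2(y_\eps-x_0)\in D^-u_i(y_\eps)$, so the sub/supersolution inequalities read
\begin{align*}
H_i\bigl(x_\eps,\,p_\eps+2(x_\eps-x_0)\bigr) + (B(x_\eps)\vv(x_\eps))_i &\le a_i,\\
H_i\bigl(y_\eps,\,p_\eps-2(y_\eps-x_0)\bigr) + (B(y_\eps)\uu(y_\eps))_i &\ge a_i.
\end{align*}
By Proposition~\ref{prop a priori Lip}, $\vv$ is Lipschitz, so $|p_\eps+2(x_\eps-x_0)|$ is bounded by its Lipschitz constant and hence $p_\eps$ stays bounded. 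Extracting a subsequence $p_\eps\to\bar p$ and letting $\eps\to 0$, continuity of the data lets the two inequalities subtract to
\[
\bigl(B(x_0)(\vv(x_0)-\uu(x_0))\bigr)_i \le 0.
\]

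Next I would exploit the algebra. Writing $w := \vv(x_0)-\uu(x_0)$, so $w_i=M$ and $w_j\le M$ for every $j$, the degeneracy $\sum_j b_{ij}(x_0)=0$ yields
\[
\bigl(B(x_0)w\bigr)_i = \sum_{j\ne i} b_{ij}(x_0)(w_j-M),
\]
a sum of non-negative terms (each factor is $\le 0$). Combined with $(B(x_0)w)_i\le 0$, every summand must vanish, so $w_j=M$ whenever $b_{ij}(x_0)\ne 0$. Let $\I := \{j\in\ind : w_j=M\}$; for any $j\in\I$, $v_j-u_j$ still attains the overall maximum $M$ at $x_0$, so rerunning the variable-doubling argument with $i$ replaced by $j$ gives $w_k=M$ whenever $b_{jk}(x_0)\ne 0$. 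This means $\I$ is invariant under the coupling at $x_0$; irreducibility of $B(x_0)$ (assumption (B1)) then forces $\I=\ind$, i.e.\ $\vv(x_0)=\uu(x_0)+M\1$. The main delicate point is the variable-doubling step, in particular keeping $p_\eps$ bounded so as to pass to the limit in the Hamiltonian terms; the Lipschitz estimate of Proposition~\ref{prop a priori Lip} is what makes this possible.
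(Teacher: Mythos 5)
Your proof is correct and follows essentially the same route as the paper: a variable-doubling argument (with a slightly different but equivalent penalization) yielding $\bigl(B(x_0)(\vv(x_0)-\uu(x_0))\bigr)_i\leqslant 0$ at a maximizing index, followed by the same degeneracy-plus-irreducibility algebra to propagate $w_j=M$ to all indices. The only cosmetic difference is that you phrase the final step as a closure property of the set $\I$ under the coupling, whereas the paper argues directly by contradiction with a single pair $i\in\I$, $k\notin\I$; both are equivalent.
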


\begin{dimo}
In view of Proposition \ref{prop a priori Lip}, we know that $\vv$ is Lipschitz continuous. 
Set 
\[
 \I=\big\{i\in\ind\,:\,\big(v_i(x_0)-u_i(x_0)\big)=M\,\big\}.
\]
We want to prove that $\I=\ind$. Indeed, if this were not the case, by the irreducible character of the matrix $B(x_0)$ there would exist $i\in\I$ and $k\not\in\I$ such that 
\begin{equation*}\label{eq1 pre-comparison}
 b_{ik}(x_0)<0.
\end{equation*}
We now make use of the method of doubling the variables  to reach a contradiction. For every $\eps>0$, we set 
\[
 \psi^\eps(x,y)=v_i(x)-u_i(y)-\frac{d(x,y)^2}{2\eps^2}-\frac{d(x,x_0)^2}{2},\qquad x,y\in\T^N.
\]
Let $M_\eps=\max\limits_{\T^N\times\T^N} \psi_\eps$ and denote by $(x_\eps,y_\eps)$ a point in $\T^N\times\T^N$ where such a maximum is achieved. By a standard argument in the theory of viscosity solution, see for instance Lemma 2.3 in \cite{barles}, the following properties hold:
\begin{equation}\label{estimates pre-comparison}
 x_\eps,y_\eps\to x_0,\quad \frac{d(x_\eps,y_\eps)}{\eps}\to 0 \qquad \hbox{as $\eps\to 0$.} 
\end{equation}
Furthermore,
\[
 p'_\eps:=\frac{x_\eps-y_\eps}{\eps^2}\in D^-u_i(y_\eps),\quad p_\eps:=p'_\eps-(x_\eps-x_0)\in D^+v_i(x_\eps)\qquad\hbox{for every $\eps>0$.}
\]
By the Lipschitz character of $v_i$ we derive that the vectors $\{\,p_\eps\,:\,\eps>0\,\}$ are equi--bounded, hence, up to subsequences and in view of the estimates \eqref{estimates pre-comparison}, we infer 
\[
 p_\eps,\,p'_\eps\to p_0\qquad\hbox{as $\eps\to 0$}
\]
for some vector $p_0\in\R^N$. We now use the fact that $\vv$ and $\uu$ are a sub and supersolution of \eqref{critical wcs c=0}, respectively, to get
\begin{eqnarray*}
 H_i(x_\eps,p_\eps)+\big(B(x_\eps)\vv(x_\eps)\big)_i&\leqslant& 0,\\
 H_i(y_\eps,p'_\eps)+\big(B(y_\eps)\uu(y_\eps)\big)_i&\geqslant& 0.
\end{eqnarray*}
By subtracting the above inequalities and by passing to the limit for $\eps\to 0$ we end up with 
\[
 \Big(B(x_0)\big(\vv(x_0)-\uu(x_0)\big)\Big)_i\leqslant 0,
\]
that is, since $i\in\I$ and the matrix $B(x_0)$ is degenerate, 
\[
 M\,b_{ii}(x_0)\leqslant \sum_{j\not=i} |b_{ij}(x_0)|\big(v_j(x_0)-u_j(x_0)\big)\leqslant M\,\sum_{j\not=i} |b_{ij}(x_0)|=M b_{ii}(x_0).
\]
Hence the above inequalities are equalities, in particular $v_k(x_0)-u_k(x_0)=M$ since $b_{ik}(x_0)\not=0$, in contrast with the fact that $k\not\in\I$.\medskip
\end{dimo}

\begin{definition}\rm
For every $\aaa\in\R^m$, we denote by $\hh (\aaa)$ the set of subsolutions of the weakly coupled system \eqref{wcoupled system}. We will more simply write $\hh(a)$ whenever $\aaa = a\1$ for some constant $a\in \R$.
\end{definition}

\begin{lemma}
The sets $\hh (\aaa)$ are convex and closed in $\left(\D C(\T^N)\right)^m$, and increasing with respect to the partial ordering on $\R^m$.
\end{lemma}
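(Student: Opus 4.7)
The statement has three independent parts: monotonicity with respect to the ordering on $\R^m$, closedness, and convexity of $\hh(\aaa)$. None of them is deep; each follows from one basic feature of the problem (respectively, monotonicity of an inequality in its right-hand side, stability of viscosity subsolutions under uniform convergence, and convexity of $H_i$ in the momentum).

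For monotonicity I would simply unwind the definition: if $\aaa \leqslant \bbb$ and $\uu \in \hh(\aaa)$, then $H_i(x,p)+(B(x)\uu(x))_i \leqslant a_i \leqslant b_i$ for every $p \in D^+u_i(x)$ and every $i\in\ind$, hence $\uu \in \hh(\bbb)$.

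For closedness I would appeal to the classical stability of viscosity subsolutions under uniform convergence. Given $(\uu^n) \subset \hh(\aaa)$ with $\uu^n \ucv \uu$ in $\T^N$, the continuity and boundedness of the coefficients of $B$ imply $(B(x)\uu^n(x))_i \ucv (B(x)\uu(x))_i$. Each scalar inequality
\begin{equation*}
H_i(x, Du_i^n)+\big(B(x)\uu^n(x)\big)_i \leqslant a_i
\end{equation*}
is therefore a subsolution inequality for a Hamilton--Jacobi equation whose right-hand side converges uniformly to $a_i - (B(x)\uu(x))_i$, and standard stability yields the corresponding inequality for $u_i$ and $\uu$. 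Hence $\uu \in \hh(\aaa)$.

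Convexity is the only item requiring slight care, because the viscosity superdifferentials $D^+(\lambda u_i + (1-\lambda) v_i)$ are not well behaved under convex combinations. My plan is to bypass this difficulty by using the almost-everywhere reformulation given by Proposition \ref{prop subsol equivalent def}. By Proposition \ref{prop a priori Lip} any $\uu,\vv \in \hh(\aaa)$ is Lipschitz continuous, so any convex combination $\ww := \lambda \uu + (1-\lambda) \vv$ with $\lambda \in [0,1]$ is Lipschitz as well. By Rademacher's theorem, at almost every $x \in \T^N$ each $u_i$ and $v_i$ is differentiable and $Dw_i(x) = \lambda Du_i(x) + (1-\lambda) Dv_i(x)$. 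Using convexity of $H_i(x,\cdot)$ and linearity of $\ww \mapsto B(x)\ww(x)$, I would conclude
\begin{equation*}
H_i\big(x, Dw_i(x)\big) + \big(B(x)\ww(x)\big)_i \leqslant \lambda\, a_i + (1-\lambda)\, a_i = a_i \quad\hbox{for a.e. $x \in \T^N$,}
\end{equation*}
which by Proposition \ref{prop subsol equivalent def} is equivalent to $\ww \in \hh(\aaa)$. The only mildly delicate point in the entire argument is thus to remember that the \emph{a priori} Lipschitz estimate makes the a.e. reformulation available; everything else is routine.
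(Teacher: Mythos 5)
Your proof is correct and matches the paper's (very terse) argument: the paper simply declares convexity and monotonicity straightforward and invokes stability of viscosity subsolutions for closedness, which is exactly what you have spelled out. Your use of the a.e. reformulation of Proposition \ref{prop subsol equivalent def} (available thanks to the Lipschitz bound of Proposition \ref{prop a priori Lip}) together with the convexity of $H_i$ and the linearity of the coupling is the intended route for the convexity part.
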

\begin{dimo}
Convexity and monotonicity are straightforward. The fact that the $\hh (\aaa)$ are closed is a direct consequence of stability of viscosity subsolutions.
\end{dimo}
\medskip

We now focus our attention to the case $\aaa=a\1$. As a direct consequence of the definition of the semigroup $\S(t)$, we get the following assertion:

\begin{prop}\label{prop fixed point}
Let $a\in\R$ and $\uu\in\big(\D{Lip}(\T^N)\big)^{m}$. Then $\uu$ is a viscosity solution of \eqref{wcoupled system} with $\aaa=a\,\1$ if and only if
\[
 \uu=\S(t)\uu+t\,a\1\quad\hbox{in $\T^N$}\qquad\hbox{for every $t>0$.}
\]
\end{prop}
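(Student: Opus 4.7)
The whole statement is essentially a rewriting of the stationary equation into the time-dependent one using the key observation that, thanks to the degeneracy hypothesis (B2), the matrix $B(x)$ annihilates $\1$ for every $x\in\T^N$. Consequently, adding or subtracting multiples of $t\,a\,\1$ does not alter the coupling term. My plan is therefore to show that $\uu$ is a viscosity solution of
\[
H_i(x,Du_i)+\big(B(x)\uu\big)_i=a\qquad(i\in\ind)
\]
if and only if the function $\vv(t,x):=\uu(x)-t\,a\,\1$ solves the evolutionary system \eqref{evo wcoupled system} on $(0,+\infty)\times\T^N$ with initial datum $\uu$; then Proposition~\ref{prop existence evo wcs} (uniqueness of $\S(t)\uu$) yields $\S(t)\uu=\uu-t\,a\,\1$, which is the claimed identity.

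\textbf{($\Rightarrow$)} Assume $\uu$ solves the stationary system and set $\vv(t,x):=\uu(x)-t\,a\,\1$. I will verify the viscosity super- and subsolution conditions for $\vv$ at an arbitrary point $(t_0,x_0)\in(0,+\infty)\times\T^N$. Given a $C^1$ subtangent $\phi(t,x)$ of $v_i$ at $(t_0,x_0)$, freezing $t=t_0$ shows that $\phi(t_0,\cdot)$ (up to the irrelevant constant $-t_0 a$) is a subtangent of $u_i$ at $x_0$, so $D_x\phi(t_0,x_0)\in D^-u_i(x_0)$; freezing $x=x_0$ turns $t\mapsto v_i(t,x_0)-\phi(t,x_0)=u_i(x_0)-ta-\phi(t,x_0)$ into a $C^1$ function with an interior minimum at $t_0$, forcing $\partial_t\phi(t_0,x_0)=-a$. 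Using $B(x_0)\1=0$ one gets $\big(B(x_0)\vv(t_0,x_0)\big)_i=\big(B(x_0)\uu(x_0)\big)_i$, and substituting into the evolutionary inequality produces exactly the stationary supersolution inequality for $\uu$ (which we are assuming), so $\vv$ is an evolutionary supersolution. The subsolution case is symmetric.

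\textbf{($\Leftarrow$)} Conversely, suppose $\uu=\S(t)\uu+t\,a\,\1$ for every $t>0$, and set $\vv(t,x):=\S(t)\uu(x)=\uu(x)-t\,a\,\1$. Given a subtangent $\phi\in C^1(U)$ of $u_i$ at $x_0$, define the time-dependent test function $\psi(t,x):=\phi(x)-t\,a$. Then $v_i-\psi\equiv u_i-\phi$ has a local minimum at $x_0$ for \emph{every} fixed $t$; in particular $(t_0,x_0)$ (for any chosen $t_0>0$) is a local minimum of $v_i-\psi$. Applying the evolutionary supersolution property of $\vv$ and using once more $B(x_0)\1=0$, the $-a$ coming from $\partial_t\psi$ cancels against the RHS to yield the stationary supersolution inequality at $x_0$. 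The subsolution condition for $\uu$ is obtained analogously with the test function $\psi(t,x)=\phi(x)-ta$ for a supertangent $\phi$ of $u_i$.

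\textbf{Expected obstacles.} There is nothing truly delicate here: the only thing one must handle carefully is the translation of test functions between the stationary and the time-dependent viscosity notions, and the verification that $B(x)\1=0$ makes the linear-in-$t$ correction $-ta\1$ compatible with the coupling term. The a priori Lipschitz regularity needed to invoke Proposition~\ref{prop existence evo wcs} for the uniqueness step is either given (since $\uu\in(\D{Lip}(\T^N))^m$ by hypothesis) or, in the $(\Rightarrow)$ direction, granted by Proposition~\ref{prop a priori Lip}.
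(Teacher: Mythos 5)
Your proposal is correct and follows exactly the route the paper intends: the paper states this proposition as a ``direct consequence of the definition of the semigroup $\S(t)$'', and your argument simply spells out that verification — $\uu$ solves the stationary system if and only if $(t,x)\mapsto\uu(x)-t\,a\,\1$ solves the evolutionary one (using $B(x)\1=0$ and the translation of test functions between the two settings), after which uniqueness from Proposition~\ref{prop existence evo wcs} identifies this function with $\S(t)\uu$. The test-function bookkeeping in both directions is handled correctly, so nothing further is needed.
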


We have the following characterization:

\begin{prop}\label{prop invariant}
Let $a\in\R$ and $\uu\in\big(\D{Lip}(\T^N)\big)^{m}$. The following facts are equivalent:
\begin{itemize}
 \item[\em (i)] \quad $\uu\in\hh(a)$;\smallskip
 \item[\em (ii)] \quad the map $t\mapsto \S(t)\uu+t\,a\1$ is non--decreasing on $[0,+\infty)$.
\end{itemize}
In particular, the
sets $\hh (a)$ are stable under the action of the semigroup $\S(t)$, in the sense that $\S(t) \big( \hh(a)\big) \subset \hh(a)$.
\end{prop}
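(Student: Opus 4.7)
The plan is to prove the two implications separately, then extract the stability statement as a corollary.

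For $(i) \Rightarrow (ii)$, the key idea is that hypothesis (i) makes $\vv(t,x):=\uu(x)-ta\1$ a subsolution of the evolutionary system \eqref{evo wcoupled system}. Indeed, a direct computation gives
\[
\partial_t v_i+H_i(x,D_xv_i)+\big(B(x)\vv(t,x)\big)_i=-a+H_i(x,Du_i)+\big(B\uu\big)_i-ta\big(B(x)\1\big)_i,
\]
and the last term vanishes because $B(x)\1=0$ by assumption (B2); hypothesis (i) then forces the right-hand side to be $\leqslant 0$. Comparison for the evolutionary system (Section 2.3) yields $\vv(t,\cdot)\leqslant\S(t)\vv(0,\cdot)=\S(t)\uu$, that is $\uu\leqslant\S(t)\uu+ta\1$ for every $t\geqslant 0$. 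To upgrade this to monotonicity of $t\mapsto\S(t)\uu+ta\1$, apply $\S(s)$ to the inequality with $t$ replaced by $t-s$, using the monotonicity of $\S(s)$ and the translation invariance $\S(s)(\ww+c\1)=\S(s)\ww+c\1$ from Proposition \ref{prop semigroup}.

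For $(ii) \Rightarrow (i)$ I would argue by contradiction. Assume there exist $i$, $x_0$ and $p\in D^+u_i(x_0)$ with $H_i(x_0,p)+\big(B(x_0)\uu(x_0)\big)_i>a$. Choose a smooth supertangent $\phi$ to $u_i$ at $x_0$ with $D\phi(x_0)=p$ and $u_i-\phi$ strictly maximal at $x_0$ on some $\bar B_r(x_0)$. By continuity there is $\eta>0$ such that on $\bar B_r(x_0)$
\[
H_i\big(x,D\phi(x)\big)+\big(B(x)\uu(x)\big)_i\geqslant a+2\eta,
\]
so that $v(t,x):=\phi(x)-(a+\eta)t$ is a strict classical supersolution of the scalar equation $\partial_t w+H_i(x,D_xw)+\big(B(x)\uu(x)\big)_i=0$ (with the coupling term treated as a given continuous source). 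On the other hand, $(\S(t)\uu)_i$ satisfies
\[
\partial_t(\S(t)\uu)_i+H_i\big(x,D_x(\S(t)\uu)_i\big)+\big(B(x)\uu(x)\big)_i=\big(B\uu\big)_i-\big(B\S(t)\uu\big)_i,
\]
and the right-hand side has $L^\infty$-norm bounded by an $e(\tau)$ which tends to $0$ as $\tau\to 0^+$, thanks to the continuity of $\S(t)\uu$ in $t$. For $\tau$ small enough that $e(\tau)<\eta$, $v$ remains a supersolution of the perturbed scalar equation while $(\S(t)\uu)_i$ is a subsolution. Checking the parabolic boundary of $[0,\tau]\times\bar B_r(x_0)$: at $t=0$, $(\S(0)\uu)_i=u_i\leqslant\phi=v(0,\cdot)$; on $\partial B_r(x_0)$, the strict maximum gives $u_i\leqslant \phi-\mu$ for some $\mu>0$, and combined with the Lipschitz bound in $t$ of $\S(t)\uu$ (coming from Proposition \ref{prop existence evo wcs}), the inequality $(\S(t)\uu)_i\leqslant v$ persists on the lateral boundary provided $\tau$ is chosen small enough. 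The standard parabolic comparison principle on the cylinder then gives $(\S(t)\uu)_i\leqslant v$ throughout; evaluating at $x=x_0$,
\[
(\S(t)\uu)_i(x_0)+ta\leqslant\phi(x_0)-\eta t=u_i(x_0)-\eta t,
\]
which contradicts (ii) for any $t>0$.

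The ``in particular'' assertion is immediate from the equivalence: if $\uu\in\hh(a)$, fix $t_0\geqslant 0$ and observe that for $0\leqslant s\leqslant t$ the semigroup property together with (ii) applied to $\uu$ at the times $s+t_0\leqslant t+t_0$ yields
\[
\S(s)\big(\S(t_0)\uu\big)+sa\1=\S(s+t_0)\uu+(s+t_0)a\1-t_0 a\1\leqslant\S(t+t_0)\uu+(t+t_0)a\1-t_0 a\1=\S(t)\big(\S(t_0)\uu\big)+ta\1,
\]
so $\S(t_0)\uu$ satisfies (ii) and therefore belongs to $\hh(a)$. The most delicate technical step is the comparison argument in the second implication: one must be careful to record an initial-time Lipschitz estimate for $\S(t)\uu$ (to handle the lateral boundary) and to justify that the scalar equation with source $(B\uu)_i$ still enjoys the standard parabolic comparison principle on a bounded cylinder, which is the case since the effective Hamiltonian $\tilde H_i(x,p):=H_i(x,p)+(B(x)\uu(x))_i$ inherits continuity, convexity and coercivity from $H_i$.
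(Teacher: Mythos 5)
Your proof is correct, and both implications follow routes genuinely different from the paper's. For $(i)\Rightarrow(ii)$ --- the direction the paper regards as the technical one and defers to Appendix \ref{appendix semiconcave} --- you observe that $(t,x)\mapsto\uu(x)-ta\1$ is a Lipschitz viscosity subsolution of \eqref{evo wcoupled system} (the degeneracy $B(x)\1=0$ is precisely what makes the extra coupling term vanish, and the space--time superdifferential of $u_i(x)-ta$ is exactly $\{-a\}\times D^+u_i(x)$), so the comparison principle for the evolutionary system recalled in Subsection \ref{Wcs} yields $\uu\leqslant\S(t)\uu+ta\1$, which the monotonicity and $\1$--commutation properties of Proposition \ref{prop semigroup} upgrade to monotonicity in $t$. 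This bypasses entirely the paper's machinery: there the same implication is proved by mollifying the Hamiltonians into locally Lipschitz, strictly convex ones, replacing $\uu$ by inf--convolutions, propagating semiconcavity along the flow (Propositions \ref{finoa0} and \ref{regularity}), using the continuity of the gradient of a semiconcave function to keep $\S_k(t)\uu^n$ inside $\hh_k(a+3\eps)$ for small times, and finally passing to the limit in three parameters; your argument is shorter and uses only tools already stated in Subsection \ref{Wcs}. For the converse implication the paper's route is leaner than yours: monotonicity gives $\partial_t\S(t_0)\uu\geqslant -a\1$ at almost every $(t_0,x)$, the evolutionary equation (satisfied pointwise a.e.\ by a Lipschitz solution) turns this into the almost--everywhere subsolution inequality for $\S(t_0)\uu$, Proposition \ref{prop subsol equivalent def} converts a.e.\ into viscosity, and stability as $t_0\to0^+$ concludes. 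Your contrapositive bump--and--cylinder argument also works --- the frozen--source scalar Hamiltonian stays continuous and coercive, the subsolution is Lipschitz and the supersolution is smooth, so local comparison on the compact cylinder is elementary, and the lateral boundary is controlled by the strict maximum together with the Lipschitz--in--time bound of Proposition \ref{prop existence evo wcs} --- but it is more laborious than necessary. The final ``in particular'' assertion is handled essentially as in the paper, via the equivalence and the semigroup identities.
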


The proof of this proposition is rather technical and it is postponed to the Appendix \ref{appendix semiconcave}.

\begin{definition}
The {\em critical value $c$} of the weakly coupled system \eqref{wcoupled system} is defined as 
\begin{equation}\label{def critical value}
c=\inf\{a\in\R\,:\,\hh(a)\not=\varnothing\,\}.
\end{equation}
\end{definition}

The following holds:

\begin{prop}\label{prop critical value}
The critical value $c$ is finite and $\hh(c)\not=\varnothing$.
\end{prop}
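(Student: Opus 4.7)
\emph{Finiteness of $c$.} To bound $c$ from above, observe that for any constant $a \geqslant \max_{i,x} H_i(x,0)$ the zero vector $\uu\equiv 0$ is a classical, hence viscosity, subsolution of \eqref{wcoupled system} with $\aaa = a\1$ (since $B(x)\cdot 0 = 0$), so $\uu\in\hh(a)$ and therefore $c<+\infty$. For the lower bound I would use Proposition \ref{prop Im}. Suppose $\uu\in\hh(a)$. By Proposition \ref{prop a priori Lip}, $\uu$ is Lipschitz, and by Proposition \ref{prop subsol equivalent def} the subsolution inequality holds a.e.; coupled with $H_i \geqslant \mu$, this yields
\[
 \big(B(x)\uu(x)\big)_i \leqslant a - \mu \qquad\text{for every $i\in\ind$ and a.e. }x\in\T^N,
\]
which extends to all $x$ by continuity of $B(x)\uu(x)$. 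If we had $a<\mu$, then at any $x$ the vector $-B(x)\uu(x) = B(x)(-\uu(x))$ would belong to $\D{Im}(B(x))$ and have all components at least $\mu - a > 0$, contradicting Proposition \ref{prop Im}. Hence $c \geqslant \mu > -\infty$.

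\emph{Nonemptiness of $\hh(c)$.} I would pick $a_n \downarrow c$ with $a_n \leqslant c+1$ and subsolutions $\uu^n\in\hh(a_n)$. Because $B(x)\1 = 0$ (degeneracy) and the Hamiltonians do not depend on $\uu$, translating $\uu^n$ by any constant multiple of $\1$ preserves the subsolution property; I choose this constant so that $\min_{x\in\T^N} u^n_1(x) = 0$. By Proposition \ref{prop a priori Lip} each $\uu^n$ is $\kappa_{a_n\1}$--Lipschitz with $\|u^n_i - u^n_j\|_\infty \leqslant C_{a_n\1}$. Inspecting the proofs of Propositions \ref{prop compactness} and \ref{prop a priori Lip} one sees that $C_\aaa$ and $\kappa_\aaa$ depend on $\aaa$ only through explicit quantities (the bounds $\mu_i$, the constants $\beta_\star,\beta^\star$, and the coercivity moduli $\alpha,\beta$) and are therefore bounded as $\aaa$ varies in a compact set; in particular they are uniformly bounded for $\aaa = a_n\1$. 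The normalization together with the uniform bound on $\|u^n_i - u^n_j\|_\infty$ gives $\|\uu^n\|_\infty \leqslant C$, and the uniform Lipschitz estimate lets me invoke Arzel\`a--Ascoli to extract a subsequence converging uniformly to some $\uu$. Stability of viscosity subsolutions under uniform convergence (which is the standard fact combined with the continuity of the coupling term in $\uu$) then yields $\uu\in\hh(c)$.

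\emph{Where I expect the friction.} The only genuinely technical point is the uniform-in-$n$ control of $\kappa_{a_n\1}$ and $C_{a_n\1}$: this is really a bookkeeping check on the dependence of the constants in Propositions \ref{prop compactness} and \ref{prop a priori Lip}, but it must be made explicit so that the Arzel\`a--Ascoli step goes through cleanly. All other ingredients (the degeneracy identity $B(x)\1=0$, stability of viscosity subsolutions, and the obstruction from Proposition \ref{prop Im}) enter in a direct and essentially painless way.
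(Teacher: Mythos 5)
Your proof is correct, and the overall architecture (zero function for the upper bound, a compactness/Arzel\`a--Ascoli/stability argument for $\hh(c)\not=\varnothing$) matches the paper's. Two points of comparison. First, your lower bound $c\geqslant\mu$ via Proposition \ref{prop Im} is a genuinely different and arguably cleaner route: the paper instead derives $\mu_i\leqslant H_i(x,Du_i^n)\leqslant a_n+M_{a_0}$ from Proposition \ref{prop compactness}{\em (ii)}, which gives the weaker-looking but equally sufficient bound $a_n\geqslant \mu_i-M_{a_0}$. Your argument avoids the a priori estimates entirely for this step and exploits the algebraic obstruction directly; both are valid. Second, the ``friction'' you flag --- uniform control of $C_{a_n\1}$ and $\kappa_{a_n\1}$ along the sequence --- dissolves without any bookkeeping: the paper's lemma that the sets $\hh(\aaa)$ are increasing in $\aaa$ means that once $a_n\leqslant a_0$ for all $n$, every $\uu^n$ belongs to the single set $\hh(a_0)$, so the constants $C_{a_0\1}$ and $\kappa_{a_0\1}$ from Proposition \ref{prop a priori Lip} apply verbatim to the whole sequence. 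Your proposed route of inspecting the dependence of the constants on $\aaa$ over compact sets would also work, but it is unnecessary effort; I recommend replacing it with the monotonicity observation. The normalization $\min_{\T^N}u_1^n=0$ (versus the paper's $u_1^n(0)=0$) is an immaterial variant, legitimate because $B(x)\1=0$ makes $\hh(a)$ invariant under translation by $\R\1$.
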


\begin{dimo}
By the growth assumptions on the Hamiltonians $H_i$ it is easily seen that the function $\mathbf u\equiv (0,\dots,0)^T$ is  a subsolution of \eqref{wcoupled system} for $a_0\1$ with $a_0\in\R$ big enough.

Let us proceed to show that $c$ is finite valued and that $\hh(c)\not=\varnothing$. Let $(a_n)_n$ be a decreasing sequence converging to $c$ and let ${\mathbf u_n}\in\hh(c_n)$ for each $n\in\N$. Up to neglecting the first terms, we can assume that $a_n\leqslant a_0$ for every $n\in\N$. Arguing as in the proof of Proposition \ref{prop a priori Lip}, we obtain that the following inequalities are satisfied in the viscosity sense:
\[
 \mu_i\leqslant H_i(x,Du^n_i)\leqslant a_n+M_{a_0}\qquad\hbox{in $\T^N$}
\]
for every $i\in\ind$ and $n\in\N$, showing that $c$ is finite.
We now exploit Proposition \ref{prop a priori Lip}: by the monotonicity of the sets $\hh(a)$ with respect to $a$, we infer that the functions ${\mathbf u_n}$ are equi--Lipschitz. Up to subtracting a vector of the form $k_n\1$ to each ${\mathbf u_n}$, we can furthermore assume that $u_1^n(0)=0$ for every $n\in\N$, yielding $\sup_n\|u_1^n\|_\infty\leqslant L$ for some $L\in\R$ by the equi--Lipschitz character of the sequence. Moreover, 
\[
 \|u_j^n-u_1^n\|_\infty\leqslant C_{a_0}\qquad\hbox{for every $j\in\ind$ and $n\in\N$,}
\]
yielding
\[
 \|u_j^n\|_\infty\leqslant C_{a_0}+L\qquad\hbox{for every $j\in\ind$ and $n\in\N$.}
\]
Up to subsequences, by the Arzela--Ascoli theorem, we infer that
\[
 \mathbf u^n \ucv \mathbf u\qquad\hbox{in $\T^N$}
\]
and ${\mathbf u}\in\hh(c)$ by stability of the notion of viscosity subsolution.
\end{dimo}\\

We now proceed to show that a weakly coupled system of the kind \eqref{wcoupled system} 
with $\aaa=a\1$ possesses solutions if and only if $a$ equals the critical value $c$. 

We start with a preliminary result.

\begin{prop}\label{prop strong comparison}
Let $B(x)$ be a continuous irreducible coupling matrix on $\T^N$ and let us assume that $B(x)$ is invertible for every $x\in\T^N$.
Let $\mathbf{v},\,\mathbf{u}\in\big(\D{C}(\T^N)\big)^m$ be, respectively, a sub and a supersolution of the weakly coupled system \eqref{wcoupled system},  for some $\aaa\in\R^m$.
Then
\[
 \vv(x)\leqslant \uu(x)\qquad\hbox{for every $x\in\T^N$.}
\]
\end{prop}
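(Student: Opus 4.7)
\medskip
\textbf{Proof plan.} I would argue by contradiction. Suppose $M := \max_{i\in\ind}\max_{x\in\T^N}\big(v_i(x)-u_i(x)\big) > 0$ and pick $(x_0,i_0)$ attaining this maximum. Setting $\ww := \vv(x_0) - \uu(x_0)$, so that $w_{i_0} = M$ and $w_j \leqslant M$ for every $j\in\ind$, the plan is to show that the doubling-of-variables mechanism used in Proposition \ref{prop pre-comparison}, combined with the irreducibility of $B(x_0)$, forces $B(x_0)$ to be degenerate, contradicting Proposition \ref{prop Ker}(ii).

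\medskip
\textbf{Step 1 (local inequality at $x_0$).} I would repeat verbatim the doubling-of-variables argument used in the proof of Proposition \ref{prop pre-comparison} on the $i_0$-th equation of \eqref{wcoupled system}, via the test function
\[
\psi^\eps(x,y) := v_{i_0}(x) - u_{i_0}(y) - \frac{d(x,y)^2}{2\eps^2} - \frac{d(x,x_0)^2}{2}.
\]
Subtracting the subsolution and supersolution inequalities at the maximizing pair $(x_\eps,y_\eps)$ and letting $\eps\to 0$, the $a_{i_0}$'s cancel and one obtains $\big(B(x_0)\ww\big)_{i_0} \leqslant 0$.

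\medskip
\textbf{Step 2 (rigidity).} Rewrite
\[
\big(B(x_0)\ww\big)_{i_0} = M\sum_{j=1}^{m} b_{i_0 j}(x_0) + \sum_{j\neq i_0} b_{i_0 j}(x_0)\,(w_j-M).
\]
Both summands are nonnegative: the first because $M>0$ and $B(x_0)$ is a coupling matrix, the second because $b_{i_0 j}(x_0)\leqslant 0$ and $w_j-M\leqslant 0$ for $j\neq i_0$. Combined with Step 1, both must vanish, yielding
\[
\sum_{j=1}^{m} b_{i_0 j}(x_0) = 0, \qquad w_j = M \ \text{ whenever } b_{i_0 j}(x_0)\neq 0.
\]

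\medskip
\textbf{Step 3 (propagation by irreducibility).} Define $\I := \{j\in\ind : w_j = M\}$; it contains $i_0$ and, by Step 2, every $j$ with $b_{i_0 j}(x_0)\neq 0$. For any $i\in\I$ the point $x_0$ still realizes the overall maximum of $v_i-u_i$, so Steps 1--2 can be re-run at $(x_0,i)$, giving $\sum_j b_{ij}(x_0) = 0$ and $\{j\neq i : b_{ij}(x_0)\neq 0\}\subset\I$. The irreducibility of $B(x_0)$ then forces $\I = \ind$, hence $\sum_j b_{ij}(x_0) = 0$ for every $i\in\ind$, i.e.\ $B(x_0)$ is degenerate. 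By Proposition \ref{prop Ker}(ii) this contradicts the invertibility of $B(x_0)$, and the proof is complete.

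\medskip
\textbf{Main obstacle.} The technical step is Step 1, but it is purely a rerun of the doubling of variables already handled in Proposition \ref{prop pre-comparison}. The conceptually new point is the bootstrap in Step 3: the hypothesis $M>0$ is exactly what upgrades the weak inequality $\big(B(x_0)\ww\big)_{i_0}\leqslant 0$ into the algebraic rigidity $\sum_j b_{i_0 j}(x_0) = 0$ plus $w_j = M$ along the nonzero off-diagonal entries of the $i_0$-th row, which then spreads to the whole index set by irreducibility and turns invertibility into a contradiction. A small but essential observation is that $x_0$ remains a global maximizer of $v_i-u_i$ for each $i\in\I$, which is what allows us to iterate the local argument at the same base point $x_0$.
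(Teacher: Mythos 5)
Your proposal is correct and follows essentially the same route as the paper: the same doubling-of-variables step at a maximizer $x_0$ yielding $\big(B(x_0)(\vv(x_0)-\uu(x_0))\big)_i\leqslant 0$ on the argmax index set, the same irreducibility propagation, and the same final contradiction with invertibility via Proposition \ref{prop Ker}. The only difference is organizational -- you extract the row-sum condition $\sum_j b_{ij}(x_0)=0$ uniformly and bootstrap, whereas the paper splits into the cases $\I=\ind$ and $\I\neq\ind$ -- but the content is identical.
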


\begin{dimo}
Arguing as in the proof of Proposition \ref{prop a priori Lip}, we easily see that $\vv$ is Lipschitz. 
Set 
\[
M=\max_{1\leqslant i\leqslant m}\max_{\T^N}\, (v_i-u_i).
\]
We want to prove that $M\leqslant 0$. Assume by contradiction that $M>0$ and pick a point $x_0\in\T^N$ where such a maximum is attained. Set 
\[
 \I=\big\{i\in\ind\,:\,\big(v_i(x_0)-u_i(x_0)\big)=M\,\big\}.
\]
Arguing as in the proof of Proposition \ref{prop pre-comparison} we infer that
\begin{equation}\label{ineq strong comparison}
\Big(B(x_0)\big(\vv(x_0)-\uu(x_0)\big)\Big)_i\leqslant 0\qquad\hbox{for every $i\in\I$.}
\end{equation}
If $\I=\ind$, inequality \eqref{ineq strong comparison} is indeed an equality and this is in contradiction with the fact that $B(x_0)$ is invertible, in view of Proposition \ref{prop Ker}. 
If $\I\not=\ind$, we choose $i\in\I$ and $k\not\in\I$ such that $b_{ik}(x_0)<0$. From \eqref{ineq strong comparison} and the assumption that $M>0$ we infer that 
\[
 M\,b_{ii}(x_0)\leqslant \sum_{j\not=i} |b_{ij}(x_0)|\big(v_j(x_0)-u_j(x_0)\big)\leqslant M\,\sum_{j\not=i} |b_{ij}(x_0)|\leqslant M b_{ii}(x_0),
\]
which implies that $v_k(x_0)-u_k(x_0)=M$, in contrast with the fact that $k\not\in\I$.\medskip
\end{dimo}

The next result implies that solutions to a weakly coupled system of the kind \eqref{wcoupled system} 
with $\aaa=a\1$ may exist only if $a$ equals the critical value.

\begin{prop}\label{prop unique c}
Let $a,\,b\in\R$ and  $\vv,\,{\mathbf u} \in \left(\D{C}(\T^N)\right)^m$ such that the following inequalities are satisfied in the viscosity sense:
\begin{eqnarray*}
H_i(x,Dv_i)+\big(B(x)\vv(x)\big)_i&\leqslant& a\qquad\hbox{in $\T^N$}\\
H_i(x,Du_i)+\big(B(x)\uu(x)\big)_i&\geqslant& b\qquad\hbox{in $\T^N$}
\end{eqnarray*}
for every $i\in\ind$. Then $b\leqslant a$.
\end{prop}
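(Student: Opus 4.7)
I argue by contradiction: suppose $b>a$. The obvious tool is the strong comparison result of Proposition \ref{prop strong comparison}, but that requires the coupling matrix to be invertible at every point, and here $B(x)$ is degenerate. The idea is therefore to regularize $B(x)$ by a small positive diagonal perturbation so that comparison becomes available, and then let the perturbation parameter tend to zero.

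For $\eps>0$, set $B_\eps(x):=B(x)+\eps I$. The off--diagonal entries of $B_\eps$ coincide with those of $B$, while the diagonal entries are strictly increased. Hence $B_\eps$ is a continuous irreducible coupling matrix, and each of its rows sums to $\eps>0$; by Proposition \ref{prop Ker} it is invertible at every $x\in\T^N$, so Proposition \ref{prop strong comparison} applies to the weakly coupled system built on $B_\eps$.

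The second step is to reinterpret $\vv$ and $\uu$ as sub/supersolutions of the $B_\eps$--system. Let $C:=\max\bigl(\|\vv\|_\infty,\|\uu\|_\infty\bigr)$ (with the $\infty$ norm in $\left(\D C(\T^N)\right)^m$). Since $(B_\eps(x)\vv)_i=(B(x)\vv)_i+\eps v_i(x)$, the subsolution hypothesis on $\vv$ yields, for every $i\in\ind$,
\[
H_i(x,Dv_i)+\bigl(B_\eps(x)\vv(x)\bigr)_i\ \leqslant\ a+\eps v_i(x)\ \leqslant\ a+\eps C\ =:\ a_\eps
\]
in the viscosity sense, while the same computation for $\uu$ gives
\[
H_i(x,Du_i)+\bigl(B_\eps(x)\uu(x)\bigr)_i\ \geqslant\ b-\eps C\ =:\ b_\eps.
\]

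The third step is to align the two constants by a scalar shift. Since $B_\eps(x)\1=\eps\1$, for any $k\in\R$ the function $\vv+k\1$ satisfies
\[
H_i\bigl(x,D(v_i+k)\bigr)+\bigl(B_\eps(x)(\vv+k\1)\bigr)_i\ \leqslant\ a_\eps+\eps k.
\]
Choose $k=k(\eps):=(b_\eps-a_\eps)/\eps=(b-a)/\eps-2C$, so that $\vv+k\1$ becomes a subsolution, and $\uu$ remains a supersolution, of the $B_\eps$--system at the common level $b_\eps\,\1$. Proposition \ref{prop strong comparison} (applied to $B_\eps$) then gives $\vv+k(\eps)\1\leqslant\uu$ on $\T^N$, and in particular $k(\eps)\leqslant 2C$. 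But $b>a$ forces $k(\eps)\to+\infty$ as $\eps\to 0^+$, a contradiction. Hence $b\leqslant a$.

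The main subtlety is simply to keep track of the fact that the perturbation $\eps I$ does two things simultaneously: it makes $B_\eps$ invertible (enabling comparison) and it modifies the effective right--hand side by $\eps v_i$ or $\eps u_i$. The degeneracy of $B$ is crucial to exploit the scalar translation $\vv\mapsto \vv+k\1$ that synchronizes the two levels, after which letting $\eps\to 0$ recovers the desired inequality without any further analysis.
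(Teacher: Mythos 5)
Your proof is correct and follows essentially the same route as the paper: both perturb the coupling matrix to $B(x)+\eps\,\D I$ to gain invertibility via Proposition \ref{prop Ker}, and then invoke the strong comparison principle of Proposition \ref{prop strong comparison}. The only (cosmetic) difference is in how the contradiction is extracted — the paper first shifts $\vv$ by a large constant vector so that $\vv>\uu$ and then chooses the perturbation small enough that the two levels stay ordered, whereas you synchronize the levels exactly with the shift $k(\eps)\1$ and let $k(\eps)\to+\infty$ — but this is the same argument in different bookkeeping.
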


\begin{dimo}
Let us assume by contradiction that $b>a$. Up to replacing $\vv$ with $\vv+k\1$ with $k>0$ big enough, we can assume
\[
 \vv >\uu \qquad\hbox{in $\T^N$.}
\]
Let $\eps>0$ such that \  $b-\eps>a+\eps$.\quad By continuity of the functions $\vv$ and $\uu$, we can find $\lambda>0$ such that
\[
 \|\lambda\,v_i\|_\infty,\,\|\lambda\,u_i\|_\infty<\eps\qquad\hbox{for every $i\in\ind$.}
\]
Then the following inequalities hold in the viscosity sense in $\T^N$:
\[
  H_i(x,Du_i)+\big((B(x)+\lambda\,\D{I})\uu(x)\big)_i
  >b-\eps
  >a+\eps
  >
  H_i(x,Dv_i)+\big((B(x)+\lambda\,\D{I})\vv(x)\big)_i.
\]
For ever $x\in\T^N$, the matrix $B(x)+\lambda\,\D I$ is irreducible, satisfies (C) and the sum of the elements of each of its rows is strictly positive, hence it is invertible in view of Proposition \ref{prop Ker}. By Proposition \ref{prop strong comparison} we conclude that 
\[
 \vv\leqslant\uu\qquad\hbox{in $\T^N$,}
\]
achieving a contradiction.
\end{dimo}\\

We are now able to prove existence of solutions for the critical system, following the lines of Fathi \cite{Fa}. 
This result has been already obtained in literature in similar settings by making use of the so called 
ergodic approximation, see \cite{mitaketran,leyetal}.

\begin{teorema}\label{weakKAM}
There exists a function $\uu\in\hh(c)$ that solves the weakly coupled system
\begin{equation}\label{eq critical wcs}
H_i(x,Du_i)+\big(B(x)\uu(x)\big)_i = c \quad\hbox{in $\T^N$}\qquad\hbox{for every $i\in\ind$}
\end{equation}
in the viscosity sense.
\end{teorema}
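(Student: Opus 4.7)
The plan is to realize a solution of \eqref{eq critical wcs} as a limit of Schauder fixed points of the renormalized-semigroup maps at vanishing time steps. Set $T(t) := \S(t)(\cdot) + tc\,\1$; by Proposition~\ref{prop fixed point} a Lipschitz function $\uu$ solves \eqref{eq critical wcs} if and only if $T(t)\uu = \uu$ for every $t>0$, and by Proposition~\ref{prop invariant} the operator $T(t)$ preserves $\hh(c)$ and satisfies $T(t)\uu \geq \uu$ whenever $\uu \in \hh(c)$.

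First I would introduce the set
\[
\mathcal{K} := \{\uu \in \hh(c) \,:\, u_1(0)=0\},
\]
which is convex, and by Propositions~\ref{prop compactness} and~\ref{prop a priori Lip} together with Arzel\`a--Ascoli it is a compact subset of $(\D{C}(\T^N))^m$. Because $T(t)$ commutes with addition of multiples of $\1$ by Proposition~\ref{prop semigroup}(iv) and $\hh(c)$ is stable under such additions (as $B(x)\1=0$), the map
\[
F_t(\uu) := T(t)\uu - (T(t)\uu)_1(0)\,\1
\]
sends $\mathcal{K}$ continuously into itself. Schauder's fixed point theorem then produces $\uu_t \in \mathcal{K}$ satisfying $T(t)\uu_t = \uu_t + \lambda_t\,\1$ for some scalar $\lambda_t \geq 0$ (non-negativity follows from the monotonicity in Proposition~\ref{prop invariant}); iterating gives $T(nt)\uu_t = \uu_t + n\lambda_t\,\1$ for every $n \in \N$.

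Next I would specialize to $t=1/n$ and pass to the limit $n\to\infty$. A uniform time-Lipschitz bound $\|T(s)\uu - \uu\|_\infty \leq Ls$ on $\hh(c)$ is readily derived from the a priori estimates of Propositions~\ref{prop compactness} and~\ref{prop a priori Lip} applied to $\S(s)\uu \in \hh(c)$ (see Proposition~\ref{prop invariant}) via the evolution equation. Taking $s=1$ in $T(1)\uu_{1/n} = \uu_{1/n} + n\lambda_{1/n}\,\1$ forces $n\lambda_{1/n} \leq L$, so by compactness of $\mathcal{K}$ there is a subsequence (still indexed by $n$) with $\uu_{1/n} \to \uu^*$ uniformly and $n\lambda_{1/n} \to \mu \in [0,L]$. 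For $t \geq 0$, setting $k_n := \lfloor tn\rfloor$ gives $T(k_n/n)\uu_{1/n} = \uu_{1/n} + k_n\lambda_{1/n}\,\1 \longrightarrow \uu^* + t\mu\,\1$, while non-expansiveness of $T$ combined with the time-Lipschitz bound shows $T(t)\uu_{1/n} \to T(t)\uu^*$. Consequently
\[
T(t)\uu^* = \uu^* + t\mu\,\1 \quad\text{for every } t\geq 0,
\]
equivalently $\S(t)\uu^* = \uu^* + t(\mu-c)\,\1$ for every $t\geq 0$.

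The last step identifies $\mu$. The identity above means that $(t,x)\mapsto \uu^*(x) + t(\mu-c)\,\1$ coincides with the evolutionary viscosity solution starting from $\uu^*$, and testing with $C^1$ functions of the form $\Phi(t,x) = \phi(x) + t(\mu-c)$ shows that $\uu^*$ is a stationary viscosity solution of $H_i(x,Du_i)+(B\uu)_i = c-\mu$ on $\T^N$ for every $i$. In particular $\uu^* \in \hh(c-\mu)$, and the definition of $c$ forces $c-\mu \geq c$, i.e., $\mu\leq 0$; combined with $\mu\geq 0$ this gives $\mu = 0$. Hence $T(t)\uu^* = \uu^*$ for every $t>0$, and Proposition~\ref{prop fixed point} yields that $\uu^*$ solves \eqref{eq critical wcs}. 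The main obstacle is that Schauder alone only produces individual fixed points of $F_t$ with a priori different shifts $\lambda_t$; the decisive device is the rescaling $t = 1/n$, which transforms this family of shifts into a single asymptotic slope $\mu$ pinned to zero by the extremality of $c$.
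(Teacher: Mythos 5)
Your proof is correct and follows essentially the same route as the paper's: both rest on compactness of the normalized critical subsolutions, a Schauder fixed-point argument for the renormalized semigroup $\S(t)+tc\,\1$, and an identification of the resulting drift with the critical value. The only differences are cosmetic — the paper obtains a common fixed point of the whole semigroup on the quotient $\hh(c)\backslash\R\1$ in one stroke and pins down the constant via Proposition \ref{prop unique c}, whereas you discretize in time $t=1/n$ and identify $\mu=0$ from the monotonicity in Proposition \ref{prop invariant} together with the extremality of $c$; both are sound.
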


\begin{dimo}
We have already proved in Proposition \ref{prop critical value} that $\hh(c)\not=\varnothing$.
Let us introduce the quotient space $\hat\hh = \hh(c)\backslash \R \1$, where we identify critical subsolutions that differ by a constant vector belonging to $\R \1$.
Arguing as in the proof of Proposition \ref{prop critical value}, it is easily seen that $\hat \hh$ is compact for the  topology of uniform convergence. Indeed, it is isomorphic to the subset of $\hh(c)$ of subsolutions whose first component vanishes at the point $x=0$. Moreover, since the viscosity semigroup commutes with the addition of vectors of the form $\lambda \1$ and  leaves $\hh(c)$ stable, it induces a continuous semigroup, denoted $\hat S$, on $\hat\hh$.

By the Schauder--Tychonoff fixed point theorem (see \cite{dug}), $\hat S$ possesses a fixed point, that is, there exists an element $\hat \uu \in \hat\hh$ such that
$$\forall t \geqslant 0, \quad \hat S(t) \hat\uu =\hat \uu.$$
Lifting these relations to $\hh(c)$, we get
\[
\hbox{$\forall t \geqslant 0$\ there exists $c_t\in \R$ such that $\S(t)\uu=\uu+c_t\1,$}
\]
where $\uu$ is any element in the equivalence class of $\hat\uu$.
Since $\S$ is a semigroup, one readily realizes that the following relations are verified:
$$ c_{t+s}=c_t+c_s\qquad\hbox{for every $t,s>0$}.$$
Since $t\mapsto \S(t)\uu$ is continuous, we necessarily deduce that $c_t=-t\tilde c$ for all $t>0$ for some constant $\tilde c\in \R$.

The identity $\S(t)\uu=\uu-t\tilde c\1$, for all $t\geqslant 0$, implies that $\uu$ is a viscosity solution of \eqref{eq critical wcs} with $\tilde c$ in place of $c$, see Proposition \ref{prop fixed point}. But then $\tilde c=c$ in view of Proposition \ref{prop unique c} and the statement is proved.\medskip
\end{dimo}
\end{section}

\begin{section}{The Aubry set}\label{aubry mane}

In this section we start our qualitative analysis on the critical weakly coupled system, i.e. the system \eqref{wcoupled system} 
with $\aaa=c\1$, where $c$ is defined via \eqref{def critical value}. From now on we will always assume the  critical value $c$  to be equal to $0$. This renormalization is always possible  by replacing each $H_i$ with $H_i-c$. The critical weakly coupled system reads as 
\begin{equation}\label{critical wcs c=0}
H_i(x,Du_i)+\big(B(x)\uu(x)\big)_i=0\quad\hbox{in $\T^N$}\qquad\hbox{for every $i\in\ind$.}
\end{equation}
Solutions, subsolutions and supersolutions of  \eqref{critical wcs c=0} will be termed {\em critical} in the sequel. The family of critical subsolutions, we recall, is denoted by $\hh(0)$.

Our qualitative analysis on the critical weakly coupled system is based on the notion of  {\em Ma\~n\' e matrix}, defined in analogy with that of  the Ma\~n\' e potential.

\begin{definition}
For all $(x,y,i,j)\in \T^N\times \T^N\times\ind\times\ind$, we define
$$\Phi_{i,j}(y,x)=\sup_{\vv \in \hh(0)} v_i(x)-v_j(y).$$
\end{definition}

The following properties hold:

\begin{prop}\label{Man sous}
The Ma\~n\' e matrix verifies the following properties:
\begin{itemize}
\item[\em (i)] it is everywhere finite and Lipschtiz continuous;\medskip
\item[\em (ii)] $\Phi_{\cdot,j}(y,\cdot)\in \hh(0)$ \quad for every $(y,j)\in\T^N\times\ind$;\medskip
\item[\em (iii)] for every $(y,j)\in\T^N\times\ind$ and $\vv\in \hh(0)$,
\begin{equation*}
\vv-v_j(y)\1 \leqslant \Phi_{\cdot,j} (y,\cdot) \qquad\hbox{in $\T^N$},
\end{equation*}
namely $\Phi_{\cdot,j} (y,\cdot)$ is the maximal critical subsolution whose $j$--th component vanishes  at $y$;\medskip
\item[\em (iv)] the entries of the Ma\~n\'e matrix are linked by the following triangular inequality:
$$\Phi_{i,k}(x,z)\leqslant \Phi_{j,k}(x,y)+\Phi_{i,j}(y,z)$$
for every $i,j,k\in\ind$ and $x,y,z\in\T^N$.
\end{itemize}
\end{prop}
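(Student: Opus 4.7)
The plan is to use the uniform regularity of critical subsolutions (Proposition \ref{prop a priori Lip}) together with the supremum-stability result (Proposition \ref{prop Engler}), exploiting that $\hh(0)$ is invariant under adding vectors in $\R\1$.

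For \emph{(i)}, I first normalize: every $\vv \in \hh(0)$ can be written as $\uu + v_1(0)\1$ with $u_1(0)=0$, so the differences $v_i(x)-v_j(y)=u_i(x)-u_j(y)$ are unchanged when passing to the normalized family. By Proposition \ref{prop a priori Lip}, the normalized family is equi-Lipschitz (with some constant $\kappa_0$) and, since $\|u_j-u_1\|_\infty \leqslant C_0$ and $u_1(0)=0$, equi-bounded. Hence $|v_i(x)-v_j(y)| \leqslant C_0 + \kappa_0\, \D{diam}(\T^N)$ uniformly in $\vv$, giving finiteness. Furthermore, for fixed $(y,j)$ the maps $x\mapsto v_i(x)-v_j(y)$ are $\kappa_0$-Lipschitz, and for fixed $(x,i)$ the maps $y\mapsto v_i(x)-v_j(y)$ are $\kappa_0$-Lipschitz; a supremum of $\kappa_0$-Lipschitz functions is $\kappa_0$-Lipschitz, so $\Phi_{i,j}$ is $\kappa_0$-Lipschitz in each variable separately, hence jointly Lipschitz.

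For \emph{(ii)}, fix $(y,j)$ and define the family
$$\F_{y,j}=\{\vv-v_j(y)\1 \,:\, \vv\in\hh(0)\} \subset \hh(0),$$
where membership in $\hh(0)$ uses that the critical system is invariant under addition of constant vectors in $\R\1$ (a consequence of hypothesis (B2), see Proposition \ref{prop semigroup}(iv)). Then the $i$-th component of $\Phi_{\cdot,j}(y,\cdot)$ is exactly the pointwise supremum over $\ww\in\F_{y,j}$ of $w_i$. Part (i) ensures this supremum is continuous, so Proposition \ref{prop Engler}(i) applies and yields $\Phi_{\cdot,j}(y,\cdot)\in\hh(0)$.

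For \emph{(iii)}, for any $\vv\in\hh(0)$ and any $i\in\ind$, by definition $v_i(x)-v_j(y) \leqslant \Phi_{i,j}(y,x)$, which is exactly the claimed inequality $\vv - v_j(y)\1 \leqslant \Phi_{\cdot,j}(y,\cdot)$. Maximality among critical subsolutions with $j$-th component vanishing at $y$ follows because if $\ww\in\hh(0)$ satisfies $w_j(y)=0$, then $w_i(x)=w_i(x)-w_j(y)\leqslant \Phi_{i,j}(y,x)$, while $\Phi_{\cdot,j}(y,\cdot)$ itself belongs to $\hh(0)$ by (ii) and evidently has vanishing $j$-th component at $y$ (take $\vv=\Phi_{\cdot,j}(y,\cdot)$ or use the fact that $v_j(y)-v_j(y)=0$ for each $\vv$ realizing the sup).

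For \emph{(iv)}, it suffices to compute: for any $\vv\in\hh(0)$,
$$v_i(z)-v_k(x) = \big(v_i(z)-v_j(y)\big)+\big(v_j(y)-v_k(x)\big) \leqslant \Phi_{i,j}(y,z)+\Phi_{j,k}(x,y),$$
and taking the supremum on the left over $\vv\in\hh(0)$ yields the triangular inequality.

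The main technical step is (ii), where one must verify that the class $\F_{y,j}$ consists of critical subsolutions (using the degenerate hypothesis (B2) to translate by $\R\1$-vectors) and that its componentwise supremum is continuous, so that Proposition \ref{prop Engler} applies; the other three items are essentially bookkeeping once (i) has been established.
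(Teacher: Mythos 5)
Your proof is correct and follows essentially the same route as the paper: finiteness and Lipschitz continuity from the a priori estimates of Proposition \ref{prop a priori Lip}, membership in $\hh(0)$ via the supremum-stability of Proposition \ref{prop Engler} applied to the translated family (using the degeneracy of $B$ to stay in $\hh(0)$), and (iii) directly from the definition. The only cosmetic difference is in (iv), where you telescope $v_i(z)-v_k(x)$ and take the supremum directly from the definition, while the paper deduces the same inequality from the maximality property (iii); both are one-line arguments and equally valid.
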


%

\begin{dimo}
The fact that the Ma\~ n\'  e matrix is well defined directly follows from Proposition \ref{prop a priori Lip}. Lipschitz continuity comes from the equi--Lipschitz character of critical subsolutions. 

The second assertion comes from the fact that $\Phi_{\cdot,j}(y,\cdot)$ is, for every fixed $(j,y)$, a supremum of critical subsolutions, hence itself a critical subsolution by Proposition \ref{prop Engler}.

The third point is a direct consequence of the definition.

The last point comes from the fact that $\Phi_{\cdot,j}(y,\cdot)$ is the greatest subsolution whose $j$--th component vanishes  at $y$. Since $\Phi_{\cdot,k}(x,\cdot)-\Phi_{j,k}(x,y)\1$ is a subsolution whose  $j$--th component vanishes  at $y$ we obtain that
$$\Phi_{\cdot,k}(x,\cdot)-\Phi_{j,k}(x,y)\1\leqslant \Phi_{\cdot,j}(y,\cdot),$$
which is the triangular inequality to be proved.\medskip
\end{dimo}

%
%

As in the case of a single critical equation, the Ma\~ n\' e vectors are ``almost'' critical solutions, in the sense precised below:
\begin{prop}\label{prop Mane matrix}
Let $y_0\in\T^N$ and $i_0\in\ind$. Then the function $\uu =\Phi_{\cdot,i_0}(y_0,\cdot)$ satisfies  
\[
 H_{i_0}(x,Du_{i_0})+\big(B(x)\uu(x)\big)_{i_0}=0\quad\hbox{in $\T^N\setminus\{y_0\}$.}
\]
and 
\[
H_i(x,Du_i)+\big(B(x)\uu(x)\big)_i=0\quad\hbox{in $\T^N$\qquad for every $i\not=i_0$}
\]
in the viscosity sense. 
\end{prop}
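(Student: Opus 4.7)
The subsolution inequality in every component is free from Proposition~\ref{Man sous}(ii), so the whole statement amounts to a supersolution check: for $i=i_0$ on $\T^N\setminus\{y_0\}$ and for each $i\neq i_0$ on all of $\T^N$. I would argue by contradiction using the standard ``push--up'' construction. Suppose there are $j\in\ind$ and $x_0\in\T^N$ (with $x_0\neq y_0$ in case $j=i_0$) at which $\uu$ fails to be a supersolution of the $j$--th equation. Then there is a $C^1$ strict subtangent $\phi$ to $u_j$ at $x_0$ with
\[
H_j\bigl(x_0,D\phi(x_0)\bigr)+\bigl(B(x_0)\uu(x_0)\bigr)_j<-2\eta
\]
for some $\eta>0$. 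By strict minimality of $u_j-\phi$ at $x_0$, one finds $\delta>0$ (with $y_0\notin\overline{B_\delta(x_0)}$ when $j=i_0$) and $\alpha>0$ such that $u_j-\phi\geqslant\alpha$ on $\partial B_\delta(x_0)$. By continuity of all data, the previous strict inequality persists, with $u_j(x)$ replaced by $\phi(x)+\eps$, on $\overline{B_\delta(x_0)}$, provided $\delta$ and $\eps>0$ are small enough.

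Next I would define $\tilde u_k=u_k$ for $k\neq j$ and
\[
\tilde u_j(x)=\begin{cases}\max\bigl(u_j(x),\,\phi(x)+\eps\bigr)& x\in\overline{B_\delta(x_0)},\\ u_j(x)& x\in\T^N\setminus\overline{B_\delta(x_0)},\end{cases}
\]
with $0<\eps<\alpha$, so that $\tilde u_j=u_j$ near $\partial B_\delta(x_0)$ and $\tilde\uu\in\bigl(\D{C}(\T^N)\bigr)^m$. The key step is to check that $\tilde\uu\in\hh(0)$. For equations $i\neq j$, the gradient is unchanged and the coupling term satisfies $\sum_k b_{ik}(x)\tilde u_k(x)\leqslant \sum_k b_{ik}(x)u_k(x)$ because $\tilde u_j\geqslant u_j$ and $b_{ij}(x)\leqslant 0$; this is the monotonicity highlighted in Remark~\ref{oss monotone system}. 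For equation $j$, I would examine a superdifferential $p\in D^+\tilde u_j(x)$ in three cases: if $\tilde u_j=u_j>\phi+\eps$ locally, the original subsolution property applies; if $\tilde u_j=\phi+\eps>u_j$ locally, then $p=D\phi(x)$ and the inequality is exactly the one we arranged by continuity on $\overline{B_\delta(x_0)}$ (using $b_{jj}(x)>0$ to absorb the $\eps$ term); if $u_j(x)=\phi(x)+\eps$, the classical supertangent argument gives $p=D\phi(x)\in D^+u_j(x)$, reducing to the first case.

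Finally, I would derive the contradiction: in both scenarios ($j=i_0$, $j\neq i_0$), the $i_0$--th component is unchanged at $y_0$, so $\tilde u_{i_0}(y_0)=u_{i_0}(y_0)=\Phi_{i_0,i_0}(y_0,y_0)=0$. By the maximality statement of Proposition~\ref{Man sous}(iii) applied to $\tilde\uu$, we obtain $\tilde\uu\leqslant\Phi_{\cdot,i_0}(y_0,\cdot)=\uu$ on $\T^N$, in particular $\tilde u_j(x_0)\leqslant u_j(x_0)$. This contradicts $\tilde u_j(x_0)=u_j(x_0)+\eps$.

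\textbf{Main obstacle.} The delicate point is the subsolution verification for $\tilde\uu$ in the $j$--th equation at points where $u_j$ and $\phi+\eps$ coincide; here one has to invoke the inclusion $D^+\max(f,g)(x)\subseteq D^+f(x)\cap D^+g(x)$ at points of equality to identify $p$ with $D\phi(x)$ and reduce to the already--established inequalities. Everything else is a direct exploitation of the sign structure of $B(x)$ (namely $b_{jj}>0$ and $b_{ij}\leqslant 0$ for $i\neq j$), which is exactly what makes the monotone nature of the system compatible with the scalar push--up argument.
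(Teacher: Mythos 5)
Your proof is correct and follows essentially the same route as the paper: a Fathi-style push-up of the $j$-th component by $\max\{u_j,\phi+\eps\}$, verification that the modified vector stays in $\hh(0)$ using $b_{jj}>0$, $b_{ij}\leqslant 0$, and the convexity-based equivalence of subsolution notions, and a contradiction with the maximality of $\Phi_{\cdot,i_0}(y_0,\cdot)$ from Proposition~\ref{Man sous}. The only (immaterial) differences are that you localize the modification to a ball $\overline{B_\delta(x_0)}$ and check the $D^+$ inequality case by case, whereas the paper uses a globally touching subtangent and verifies the subsolution property almost everywhere.
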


\begin{dimo}
We argue by contradiction, following the classical argument of \cite{Fa} for the classical Ma\~n\'e potential.

Let $(i,y)$ be such that either $i\neq i_0$ or $y\neq y_0$. Let us assume that the viscosity supersolution condition is violated at $(i,y)$. This means that there exists
 a $ C^1$ function $\psi$ such that  $\psi(x)\leqslant \Phi_{i,i_0}(y_0,x)$ for all $x$, with equality if and only if $x=y$, and 
\[
H_i\big(x,D\psi(y)\big)+\big(B(y)\Phi_{\cdot,i_0}(y_0,y)\big)_i <0.
\]
Since $\psi$ is $ C^1$, and $B(\cdot)$ and $\Phi_{\cdot,i_0}(x_0,\cdot)$ are continuous, it is clear that this strict inequality continues to hold in a neighborhood of $y$. We infer that it is possible to find $\eps>0$ small enough such that the function $w_i :=\max\{\Phi_{i,i_0}(y_0,\cdot),\psi+\eps\}$ verifies
$$
H_i\big(x,Dw_i(x)\big)+\big(B(x)\ww(x)\big)_i \leqslant 0\qquad\hbox{for a.e. $x\in\T^N$,}
$$
where $\ww$ is the vector whose $i$--th coordinate is $w_i$ and whose other coordinates are those of $\Phi_{\cdot,i_0}(y_0,\cdot)$. In the case when $i=i_0$ and $y\neq y_0$, we choose $\eps>0$ small enough in such a way that $w_i(y_0)=\Phi_{i,i_0}(y_0,y_0)=0$. Moreover, for every $j\not=i$, 
\[
H_j\big(x,Dw_j(x)\big)+\big(B(x)\ww(x)\big)_j \leqslant 0\qquad\hbox{for a.e. $x\in\T^N$,}
\]
as it is easily seen from the fact that $b_{ji}(\cdot)\leqslant 0$ in $\T^N$ and $w_i\geqslant \Phi_{i,i_0}(y_0,\cdot)$. 

We have thus shown that $\ww$ is a critical subsolution with $w_{i_0}(y_0)=0$, $\ww\geqslant \Phi_{i,i_0}(y_0,\cdot)$ and $\ww\not\equiv \Phi_{i,i_0}(y_0,\cdot)$, thus contradicting the maximality of $\Phi_{i,i_0}(y_0,\cdot)$ amongst subsolutions whose $i_0$--th coordinate vanishes at $y_0$.\medskip
\end{dimo}

Next, we show a strong invariance property enjoyed by the rows of the Ma\~n\'e matrix. 

\begin{prop}
Let $i,j\in\ind$ and $y\in\T^N$. If $\Phi_{\cdot,i}(y,\cdot)$ is a critical solution on $\T^N$, then $\Phi_{\cdot,j}(y,\cdot)$ is too. 
\end{prop}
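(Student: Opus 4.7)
The plan is to identify $\vv := \Phi_{\cdot,j}(y,\cdot)$ with the translate $\tilde\uu := \uu - u_j(y)\1$ of $\uu := \Phi_{\cdot,i}(y,\cdot)$. Since $B(\cdot)\1 \equiv 0$ by degeneracy, translation by a vector proportional to $\1$ preserves the property of being a critical (sub/super)solution; in particular $\tilde\uu$ inherits from the hypothesis the status of critical solution, and establishing $\vv = \tilde\uu$ will immediately conclude the proof.

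To obtain this identification I will play the two extremal properties of the Ma\~n\'e rows against each other, relying on Proposition \ref{Man sous}(iii). On the one hand, $\tilde\uu$ is a critical subsolution with $\tilde u_j(y) = 0$, and $\vv$ is maximal among such, so
$$\tilde\uu \leq \vv \quad\text{in $\T^N$.}$$
On the other hand, $\vv$ itself is a critical subsolution, and applying the maximality of $\uu$ (noting that $u_i(y) = \Phi_{i,i}(y,y) = 0$) yields $\vv - v_i(y)\1 \leq \uu$, i.e.
$$\vv \leq \uu + v_i(y)\1 = \tilde\uu + \bigl(v_i(y) + u_j(y)\bigr)\1.$$
Setting $M := \max_{k\in\ind,\,x\in\T^N}\bigl(v_k(x) - \tilde u_k(x)\bigr) \geq 0$, the second bound gives $M \leq v_i(y)+u_j(y)$, while direct evaluation at $(i,y)$ gives the matching lower bound $v_i(y) - \tilde u_i(y) = v_i(y) + u_j(y)$. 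Hence $M = v_i(y)+u_j(y)$, and this maximum is attained at the pair $(i,y)$.

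The last step is to invoke Proposition \ref{prop pre-comparison}, applied to $\vv$ (critical subsolution) and $\tilde\uu$ (critical supersolution), at the maximum point $x_0 = y$: this delivers the vector identity $\vv(y) = \tilde\uu(y) + M\1$. Reading its $j$-th coordinate,
$$0 = v_j(y) = \tilde u_j(y) + M = M,$$
so $M = 0$, and combined with $\tilde\uu \leq \vv$ this forces $\vv = \tilde\uu$ throughout $\T^N$, exhibiting $\vv$ as a critical solution. The only delicate point in the strategy is \emph{locating} the maximum $M$: both maximality assertions for the Ma\~n\'e rows must be used in tandem to pin it down at $(i,y)$, which is precisely the configuration that lets Proposition \ref{prop pre-comparison} propagate information onto the $j$-th coordinate and collapse $M$ to zero through the normalization $v_j(y) = 0$.
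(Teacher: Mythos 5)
Your argument is correct. It leans on the same two pillars as the paper's proof --- the maximality property of the Ma\~n\'e rows (Proposition \ref{Man sous}(iii)) and Proposition \ref{prop pre-comparison} applied at $y$ with the maximum located in the $i$--th component --- but it assembles them differently. The paper translates $\uu=\Phi_{\cdot,i}(y,\cdot)$ upward by $\Phi_{i,j}(y,y)\1$ so that $\vv\leqslant\uu$ with $v_i(y)=u_i(y)$, uses Proposition \ref{prop pre-comparison} only to get $\vv(y)=\uu(y)$, and then performs a purely local verification of the one missing supersolution inequality at the pair $(j,y)$ via the inclusion $D^-v_j(y)\subseteq D^-u_j(y)$; crucially it first invokes Proposition \ref{prop Mane matrix} to know the supersolution test already holds everywhere else. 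You instead normalize $\uu$ downward to $\tilde\uu=\uu-u_j(y)\1$, obtain the two--sided sandwich $\tilde\uu\leqslant\vv\leqslant\tilde\uu+M\1$, and collapse $M$ to $0$ by reading the $j$--th coordinate of the identity from Proposition \ref{prop pre-comparison} against the normalization $\Phi_{j,j}(y,y)=0$. This yields the stronger global identity $\Phi_{\cdot,j}(y,\cdot)=\Phi_{\cdot,i}(y,\cdot)-\Phi_{j,i}(y,y)\1$, which the paper only establishes later (Corollary \ref{cor2 rigid}, there restricted to $y\in\A$), and it bypasses both Proposition \ref{prop Mane matrix} and the subdifferential argument entirely. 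The trade--off is that your route needs the full strength of the hypothesis that $\Phi_{\cdot,i}(y,\cdot)$ is a solution (to make $\tilde\uu$ a supersolution in the sandwich), whereas the paper's local argument isolates exactly where that hypothesis enters; both are legitimate, and yours is arguably the more economical.
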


\begin{dimo}
Let us set $\vv:=\Phi_{\cdot,j}(y,\cdot)$ and $\uu:=\Phi_{\cdot,i}(y,\cdot)+\Phi_{i,j}(y,y)\,\1$. In view of Proposition \ref{prop Mane matrix}, we only need to show that 
\[
  H_{j}(y,p)+\big(B(y)\vv(y)\big)_{j}\geqslant 0\quad\hbox{for every $p\in D^- v_j(y)$.}
\]
According to Proposition \ref{Man sous}, $\vv\leqslant \uu$ in $\T^N$ and $v_i(y)=u_i(y)$.
The functions $\vv$ and $\uu$ being respectively a critical subsolution and a solution,  we can apply Proposition \ref{prop pre-comparison} to infer that $\vv(y)=\uu(y)$. This also implies that $D^- v_j(y)\subseteq D^- u_j(y)$. Exploiting again the fact that $\uu$ is a critical solution we finally get
\[
 0\leqslant H_{j}(y,p)+\big(B(y)\uu(y)\big)_{j}=H_{j}(y,p)+\big(B(y)\vv(y)\big)_{j}\quad\hbox{for every $p\in D^- v_j(y)$.}
\]
\end{dimo}

In view of the previous proposition, the following definition is well posed:

\begin{definition}
The Aubry set $\A$ for the weakly coupled system \eqref{critical wcs c=0} is the  set defined as
\[
 \A=\left\{ y\in\T^N\,:\,\Phi_{\cdot,i}(y,\cdot)\ \hbox{is a critical solution}\,\right\},
\]
where $i$ is any fixed index in $\ind$. 
\end{definition}

By the continuity of the Ma\~n\'e matrix and the stability of the notion of viscosity solution, it is easily seen that $\A$ is closed. The analysis we are about to present will show that the Aubry set is nonempty: as in the corresponding critical scalar case, we will see that $\A$ is the set where the obstruction to the existence of globally strict critical subsolutions concentrates.

\begin{definition}
Let $\vv\in\hh(0)$. We will say that $v_i$ is {\em strict at $y\in\T^N$} if there exist an open neighborhood $V$ of $y$ and $\delta>0$ such that   
\begin{equation*}\label{def i,y strict}
H_i\big(x,Dv_i(x)\big)+\big(B(x)\vv(x)\big)_i <-\delta \quad\hbox{for a.e. $x\in V$}. 
\end{equation*}

We will say that $v_i$ is strict in an open subset $U$ of $\T^N$ if it is strict at $y$ for every $y\in U$.
\end{definition}
%

We start by establishing an auxiliary result that will be needed in the sequel.

\begin{lemma}\label{lemma inferno}
Let $\ww\in\hh(0)$ such that $w_i$ is strict at $y\in\T^N$. Then there exists $\widetilde\ww\in\hh(0)$ such that $\widetilde w_i$ is ${ C}^\infty$ and strict in a neighborhood of $y$. 
\end{lemma}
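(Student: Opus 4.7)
My plan is to leave the components $w_j$ for $j\neq i$ untouched and to modify only $w_i$ locally near $y$, combining a mollification of $w_i$ with a gluing via maximum that keeps $\widetilde w_i=w_i$ far from $y$ and exploits the strictness margin on $V$ to absorb the errors. A naive global mollification cannot work since it would destroy the subsolution property outside $V$; conversely, it is the coupling that forces the delicate ordering of parameters described below.

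Set $g_i(x):=\bigl(B(x)\ww(x)\bigr)_i$ and consider the scalar operator
\[
L(v)(x):=H_i\bigl(x,Dv(x)\bigr)+b_{ii}(x)\,v(x)+\sum_{j\neq i}b_{ij}(x)\,w_j(x),
\]
so that $w_i$ is a viscosity subsolution of $L(v)\leqslant 0$ on $\T^N$ and of $L(v)\leqslant -\delta$ on $V$. Setting $w_i^n:=w_i\ast\rho_n$, a standard application of Jensen's inequality to the convexity of $p\mapsto H_i(x,p)$, combined with the Lipschitz character of $w_i$ (Proposition~\ref{prop a priori Lip}), the uniform continuity of $H_i$ in $x$ on bounded $p$-sets, and the uniform convergence $g_i\ast\rho_n\to g_i$, yields an open set $V'$ with $\overline{V'}\subset V$ and a sequence $\eps_n\downarrow 0$ such that, for $n$ large,
\[
H_i\bigl(x,Dw_i^n(x)\bigr)+g_i(x)\leqslant -\tfrac{\delta}{2}\ \text{on }V',\qquad H_i\bigl(x,Dw_i^n(x)\bigr)+g_i(x)\leqslant \eps_n\ \text{on }\T^N.
\]
Pick now $r>0$ with $\overline{B_r(y)}\subset V'$, a parameter $\eta>0$, and a smooth bump $\phi\colon\T^N\to[0,2\eta]$ equal to $0$ on $\overline{B_{r/3}(y)}$ and to $2\eta$ on $\T^N\setminus B_{2r/3}(y)$, with $|D\phi|_\infty\leqslant C\eta/r$. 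Choose $n$ so that $\|w_i^n-w_i\|_\infty<\eta/10$, and set
\[
\widetilde w_i:=\max\bigl\{w_i,\,w_i^n+\eta-\phi\bigr\},\qquad \widetilde w_j:=w_j\ (j\neq i),\qquad \widetilde\ww:=(\widetilde w_1,\dots,\widetilde w_m)^T.
\]
At $y$ one has $w_i^n+\eta-\phi\geqslant w_i+9\eta/10>w_i$, hence $\widetilde w_i$ coincides with the smooth function $w_i^n+\eta-\phi$ on an open neighborhood of $y$; outside $B_{2r/3}(y)$, where $\phi\equiv 2\eta$, the estimate $w_i^n+\eta-\phi\leqslant w_i^n-\eta<w_i$ yields $\widetilde w_i=w_i$. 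In particular the coincidence set $\{w_i^n+\eta-\phi\geqslant w_i\}$ is contained in $V'$.

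It remains to verify $\widetilde\ww\in\hh(0)$. For $j\neq i$, the $j$-th equation reduces to the one for $\ww$ plus the nonpositive term $b_{ji}(x)\bigl(\widetilde w_i(x)-w_i(x)\bigr)$, since $b_{ji}\leqslant 0$ and $\widetilde w_i\geqslant w_i$. For the $i$-th equation, I would invoke the stability of scalar viscosity subsolutions under maximum for the operator $L$ (the monotone added term $b_{ii}v$, with $b_{ii}\geqslant 0$, poses no problem for this operation): $L(w_i)\leqslant 0$ is the original subsolution property, while on $V'$ a first--order Lipschitz estimate on $H_i$ in $p$ together with Step~1 gives
\[
L(w_i^n+\eta-\phi)\leqslant -\tfrac{\delta}{2}+K|D\phi|_\infty+\beta^\star\bigl(\|w_i^n-w_i\|_\infty+\eta\bigr)\leqslant -\tfrac{\delta}{4},
\]
where $K$ is a local Lipschitz constant of $H_i(x,\cdot)$ on the range of gradients involved, uniform in $x$, and $\beta^\star$ is as in Proposition~\ref{prop compactness}. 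Since the coincidence set lies inside $V'$, max-stability yields $L(\widetilde w_i)\leqslant 0$ globally; restricted to the neighborhood of $y$ where $\widetilde w_i$ is smooth, the same inequality gives $L(\widetilde w_i)\leqslant -\delta/4$ pointwise, hence the required strictness.

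The main obstacle is precisely this parameter chase, which is imposed by the coupling: the signs $b_{ji}\leqslant 0$ force the global bound $\widetilde w_i\geqslant w_i$ (for the $j\neq i$ equations), while the $i$-th equation penalizes any positive lift $\widetilde w_i-w_i$ through the positive coefficient $b_{ii}$. Therefore $\eta$ must lie below a constant multiple of $\delta$ (to be absorbed by the strict margin on $V$) but above $\|w_i^n-w_i\|_\infty$ (so that $\widetilde w_i=w_i$ outside $B_{2r/3}(y)$), and the bump slope $|D\phi|_\infty\sim\eta/r$ itself must be absorbed by $\delta$. This forces the hierarchy $\delta,r\mapsto\eta\mapsto n$ in the choices of parameters.
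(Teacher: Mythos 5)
Your proof is correct and follows essentially the same strategy as the paper's: mollify $w_i$, use Jensen's inequality and the strictness margin on a neighbourhood of $y$ to keep the mollification a strict subsolution there, lift it above $w_i$ so that the equations for $j\neq i$ survive thanks to $b_{ji}\leqslant 0$, and glue back to $w_i$ entirely inside the strictness region. The only (harmless) difference is the gluing device: you take a $\max$ with a tilted, lifted mollification and invoke max--stability plus a local Lipschitz bound on $H_i(x,\cdot)$ to absorb $D\phi$, whereas the paper interpolates via $\phi\psi_n+(1-\phi)w_i$ and absorbs the cross term $(\psi_n-w_i)D\phi$ using convexity and a continuity modulus of $H_i$.
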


\begin{dimo}
By hypothesis, there exist  $r>0$ and $\delta>0$ such that
\begin{equation*}
H_i\big(x,Dw_i(x)\big)+\big(B(x)\ww(x)\big)_i <-\delta \quad\hbox{for a.e. $x\in B_{2r}(y)$.}
\end{equation*}
Let $\phi:\T^N\to [0,1]$ be a ${C}^\infty$--function, compactly supported in $B_r(y)$ and such that $\phi\equiv 1$ in $B_{r/2}(y)$. 
Let us denote by $\kappa$ a Lispchitz constant for the critical subsolutions and by $\omega$ a continuity modulus of $H_i$ in $\T^N\times B_R$ for some fixed $R>\kappa+\|D\phi\|_\infty$. Let $(\rho_n)_n$ be a sequence of standard mollifiers  on $\R^N$ and define
\[
 \psi_n(x)=(\rho_n * w_i)(x)+\|\rho_n * w_i-w_i\|_\infty,\qquad x\in\T^N.
\]
Note that $\psi_n\geqslant w_i$ in $\T^N$ for every $n\in\N$ and 
\[
 d_n:=\|\psi_n-w_i\|_\infty\to 0\qquad\hbox{as $n\to +\infty$.}
\]
Up to neglecting the first terms, we furthermore assume that all the $d_n$ are less than 1. 
For every $n\in\N$, we define a function $\widetilde\ww^n\in \big(\D{Lip}(\T^N)\big)^m$ by setting
\[
 \widetilde w^n_j(x)=w_j(x)\quad\hbox{if $j\not=i$,}\qquad \widetilde w^n_i(x)=\phi(x)\psi_n(x)+\big(1-\phi(x)\big)w_i(x)
\]
for every $x\in\T^N$. It is apparent by the definition that $\widetilde w_i^n$ is of class $ C^\infty$ in $B_{r/2}(y)$.
Moreover the functions $(\widetilde w^n_i)_n$, and hence the $(\widetilde \ww^n)_n$, are equi--Lipschitz. Indeed, for almost every $x\in\T^N$,  
\begin{equation}\label{eq inferno equi}
  D\widetilde w^n_i(x)=\phi(x) D\psi_n(x) +\big(1-\phi(x)\big)Dw_i(x)+\big(\psi_n(x)-w_i(x)\big)D\phi(x)
\end{equation}
that is \quad $\|D\widetilde w^n_i\|_\infty\leqslant \kappa+\|D\phi\|_\infty$. 
We want to show that $n$ can be chosen sufficiently large in such a way that $\widetilde\ww^n\in\hh(0)$ and 
\begin{equation}\label{eq2 inferno}
H_i\big(x,D\widetilde w^n_i(x)\big)+\big(B(x)\widetilde\ww^n(x)\big)_i <-\frac{2}{3}\,\delta \qquad\hbox{for a.e. $x\in B_{r}(y)$.}
\end{equation}
We first note that, since $\widetilde w^n_i\geqslant w_i$ and $b_{ji}\leqslant 0$ in $\T^N$ for every $j\not=i$, we have 
\begin{equation}\label{eq4 inferno}
H_j(x,D\widetilde w^n_j(x))+\big(B(x)\widetilde\ww^n(x)\big)_j\leqslant 0\quad\hbox{in $\T^N$}\qquad\hbox{for every $j\not=i$.}
\end{equation}
Moreover, since $\widetilde \ww^n$ agrees with $\ww$ outside $B_r(y)$, in order to show that $\widetilde\ww^n$ satisfies \eqref{eq4 inferno} 
also for $j=i$, it will be enough,  by the convexity of $H_i$, to prove \eqref{eq2 inferno}. 

To this aim, we start by noticing that 
\begin{equation}\label{eq1 inferno}
 H_i\big(x,D\widetilde w^n_i(x)\big) \leqslant \phi(x) H_i\big(x,D\psi_n(x)\big)+\big(1-\phi(x)\big)H_i\big(x,Dw_i(x)\big)+\omega\left(d_n\,\|D\phi\|_\infty\right)
\end{equation}
for almost every $x\in\T^N$, in view of \eqref{eq inferno equi} and of the convexity of $H_i$.  
By Jensen's inequality, for every $n>1/r$ and every $x\in B_{r}$ we have
\begin{eqnarray}\label{eq3 inferno}
H_i\big(x,D\psi_n(x)\big)
  &=&
  H_i\left(x,\int_{B_{1/n}} Dw_i(x-y)\rho_n(y)\,\dd y\right)\nonumber\\
  &\leqslant&
  \int_{B_{1/n}} H_i\big(x,Dw_i(x-y)\big)\rho_n(y)\,\dd y\nonumber\\
  &\leqslant&
  \omega(1/n)+\int_{B_{1/n}} H_i\big(x-y,Dw_i(x-y)\big)\rho_n(y)\,\dd y\nonumber\\
  &\leqslant&
  -\int_{B_{1/n}} \big(B(x-y)\ww(x-y)\big)_i\,\rho_n(y)\,\dd y-\delta+\omega(1/n)\nonumber\\
  &\leqslant&
  -\big(B(x)\widetilde\ww^n(x)\big)_i-\delta+\omega(1/n)+\eps_n,
\end{eqnarray}
where 
\[
 \eps_n:=\sup_{|z|\leqslant 1/n}\big\|\big(B(\cdot+z)\ww(\cdot+z)-B(\cdot)\widetilde\ww^n(\cdot)\big)_i\big\|_\infty.
\]
Since $\widetilde\ww^n\ucv\ww$ in $\T^N$ and all these functions are equi--Lipschitz, it is easily seen that $\lim_n\eps_n=0$. 
Furthermore
\begin{equation}\label{eq5 inferno}
  H_i\big(x,Dw_i(x)\big)
  \leqslant
  -\big(B(x)\widetilde\ww^n(x)\big)_i
  -\delta+\eps_n
  \quad
  \hbox{for a.e. $x\in B_{r}(y)$.}
  \end{equation}
We now choose $n>1/r$ sufficiently large such that
\[
\omega\left(d_n\,\|D\phi\|_\infty\right)+\omega(1/n)+\eps_n<\frac{\delta}{6}
\]
and plug \eqref{eq3 inferno} and \eqref{eq5 inferno} into \eqref{eq1 inferno} to finally get \eqref{eq2 inferno}. 
The assertion follows by setting $\widetilde\ww:=\widetilde\ww^n$ for such an index $n$.\medskip
\end{dimo}

The next proposition shows that the $i$--th component of any critical subsolution fulfills the supersolution test on $\A$. 

\begin{prop}\label{prop supersol on A_i}
Let  $y\in\A$. Then, for every $i\in\ind$ and  $\ww\in\hh(0)$, 
\begin{equation}\label{claim supersol on A_i}
H_i(y,p)+\big(B(y)\ww(y)\big)_i=0\qquad\hbox{for every $p\in D^-w_i(y)$.} 
\end{equation}
\end{prop}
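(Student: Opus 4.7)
The plan is to observe that the inequality $\leqslant 0$ in \eqref{claim supersol on A_i} is immediate from $\ww \in \hh(0)$ together with Proposition \ref{prop subsol equivalent def} (since the subsolution condition extends from $D^+w_i$ to $D^-w_i$ under convexity). The substantial part is the reverse inequality, for which I would exploit the fact that $y\in\A$ makes $\Phi_{\cdot,i}(y,\cdot)$ a genuine critical \emph{solution}, together with the maximality property of the Ma\~n\'e matrix and the rigidity furnished by Proposition \ref{prop pre-comparison}.

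Fix $y\in\A$, $i\in\ind$, $\ww\in\hh(0)$, and set $\vv:=\ww - w_i(y)\1$ and $\uu:=\Phi_{\cdot,i}(y,\cdot)$. By Proposition \ref{prop semigroup}(iv)-style invariance (addition of a multiple of $\1$ preserves subsolutions, since $B(y)\1=0$ by degeneracy), $\vv$ is still a critical subsolution, and by Proposition \ref{Man sous}(iii) we have $\vv\leqslant\uu$ on $\T^N$. Moreover $v_i(y)=0=\Phi_{i,i}(y,y)=u_i(y)$, so the maximum $M:=\max_{j}\max_{\T^N}(v_j-u_j)\leqslant 0$ is in fact $0$ and is attained at $x_0=y$ on the $i$-th component. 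Since $y\in\A$, $\uu$ is a critical solution (hence supersolution), so Proposition \ref{prop pre-comparison} applies and yields $\vv(y)=\uu(y)$, i.e.\ $w_j(y)-w_i(y)=\Phi_{j,i}(y,y)$ for every $j\in\ind$. Using once more that $B(y)\1=0$, this rewrites as
\[
\big(B(y)\ww(y)\big)_i \;=\; \big(B(y)\Phi_{\cdot,i}(y,y)\big)_i.
\]

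Next I would transfer the subdifferential $p\in D^-w_i(y)$ onto $\Phi_{i,i}(y,\cdot)$. If $\phi\in C^1$ is a subtangent of $w_i$ at $y$ with $D\phi(y)=p$, then near $y$
\[
\phi - w_i(y) \;\leqslant\; w_i - w_i(y) \;\leqslant\; \Phi_{i,i}(y,\cdot),
\]
with equality at $y$ (the second inequality is Proposition \ref{Man sous}(iii) read on the $i$-th component). Hence $\phi-w_i(y)$ is a subtangent of $\Phi_{i,i}(y,\cdot)$ at $y$, proving $p\in D^-\Phi_{i,i}(y,\cdot)(y)$. Since $\uu=\Phi_{\cdot,i}(y,\cdot)$ is a critical solution at $y\in\A$, the supersolution test gives
\[
H_i(y,p) + \big(B(y)\Phi_{\cdot,i}(y,y)\big)_i \;\geqslant\; 0,
\]
and substituting the identity $(B(y)\Phi_{\cdot,i}(y,y))_i=(B(y)\ww(y))_i$ obtained above yields the desired lower bound $H_i(y,p)+(B(y)\ww(y))_i\geqslant 0$. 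Combined with the subsolution inequality, equality \eqref{claim supersol on A_i} follows.

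The main conceptual step is the application of Proposition \ref{prop pre-comparison}: it is what upgrades the pointwise equality in a \emph{single} component $i$ into the vectorial identity $\ww(y)-w_i(y)\1=\Phi_{\cdot,i}(y,y)$, and this vectorial identity is precisely what allows the coupling term $(B(y)\ww(y))_i$ to be replaced by $(B(y)\Phi_{\cdot,i}(y,y))_i$. The rest is routine manipulation of sub/supertangents together with the degeneracy $B(y)\1=0$. I do not expect to need Lemma \ref{lemma inferno} here, as the argument proceeds by direct comparison with the Ma\~n\'e vector rather than by a perturbation/maximality contradiction.
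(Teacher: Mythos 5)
Your proof is correct and follows essentially the same route as the paper's: comparison with the Ma\~n\'e vector $\Phi_{\cdot,i}(y,\cdot)$ (shifted by $w_i(y)\1$, which you apply to $\ww$ rather than to $\Phi$ -- an immaterial difference), the inclusion of subdifferentials $D^-w_i(y)\subseteq D^-\Phi_{i,i}(y,\cdot)(y)$, the vectorial rigidity from Proposition \ref{prop pre-comparison} to identify the coupling terms, and the chaining of the subsolution and supersolution inequalities. No gaps; this matches the paper's argument.
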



\begin{dimo}
Pick $\ww\in\hh(0)$ and set $\uu=\Phi_{\cdot,i}(y,\cdot)+w_i(y)\1$. According to Proposition \ref{Man sous}, $\ww\leqslant\uu$ and, by definition of $\uu$, $w_i(y)=u_i(y)$, in particular $D^-w_i(y)\subseteq D^-u_i(y)$.  Now we exploit the fact that $\uu$ and $\ww$ are a critical solution and  subsolution, respectively:  from Proposition \ref{prop pre-comparison} we infer that $\ww(y)=\uu(y)$, while Proposition \ref{prop subsol equivalent def} implies 
\[
 0\geqslant H_i(y,p)+\big(B(y)\ww(y)\big)_i= H_i(y,p)+\big(B(y)\uu(y)\big)_i\geqslant 0\quad\hbox{$\forall p\in D^-w_i(y)$.}
\]
Hence all the inequalities must be equalities and the statement follows.\medskip 
\end{dimo}

A converse of this result is given by the following 

\begin{prop}\label{prop i,y strict}
Let $i\in\ind$. The following facts are equivalent:
\begin{itemize}
 \item[{\em (i)}]\ $y\not\in\A$;\medskip
 \item[{\em (ii)}]\ there exists $\ww\in\hh(0)$ such that $w_i$ is strict  at $y$.
\end{itemize}
Moreover, $w_i$ can be taken of class $C^1$ in a neighborhood of $y$.
\end{prop}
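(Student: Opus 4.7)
The implication $(ii)\Rightarrow(i)$ I would prove by contradiction. Assume $w_i$ is strict at $y$ on some neighborhood $V$ with constant $\delta>0$ and that nevertheless $y\in\A$. Then $\uu:=\Phi_{\cdot,i}(y,\cdot)+w_i(y)\1$ is a critical solution satisfying $\ww\leqslant\uu$ in $\T^N$ and $u_i(y)=w_i(y)$, so Proposition~\ref{prop pre-comparison} forces $\ww(y)=\uu(y)$. A doubling-of-variables argument entirely analogous to the proof of Proposition~\ref{prop pre-comparison}, applied to $w_i$ (as a strict subsolution, after invoking Proposition~\ref{prop subsol equivalent def}) and $u_i$ (as a supersolution), then yields maximizers $(x_\eps,z_\eps)\to(y,y)$ together with vectors $p_\eps\in D^+w_i(x_\eps)$ and $p'_\eps\in D^-u_i(z_\eps)$ converging to a common limit $p_0$. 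Writing the strict subsolution inequality at $(x_\eps,p_\eps)$ and the supersolution inequality at $(z_\eps,p'_\eps)$, subtracting and sending $\eps\to 0$, one obtains $\big(B(y)(\ww(y)-\uu(y))\big)_i\leqslant -\delta$; combined with $\ww(y)=\uu(y)$, this gives $0\leqslant -\delta$, a contradiction.

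For the converse $(i)\Rightarrow(ii)$, which is the heart of the proposition, I would set $\uu:=\Phi_{\cdot,i}(y,\cdot)$. By Proposition~\ref{prop Mane matrix}, $\uu$ solves the critical system viscously away from $y$ and satisfies each $j$-th equation ($j\neq i$) globally; the hypothesis $y\notin\A$ therefore forces the $i$-th supersolution test of $\uu$ to fail at $y$. Thus there exist a $C^1$ function $\psi$, which I may assume to be a strict subtangent of $u_i$ at $y$ (replacing $\psi$ by $\psi-d(y,\cdot)^2$ preserves $D\psi(y)$), and a constant $\delta>0$ such that
\[
H_i\big(y,D\psi(y)\big)+\big(B(y)\uu(y)\big)_i<-2\delta.
\]
By continuity this inequality persists on some ball $B_{r_0}(y)$, and up to shrinking $r_0$ I may arrange in addition $u_i-\psi\geqslant \eta>0$ on $\partial B_{r_0}(y)$ for some $\eta>0$, using the strict character of $\psi$.

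I then perform a local ``lift'' of $u_i$. Fixing $\eps\in\big(0,\,\min\{\eta,\,\delta/\beta^\star\}\big)$, where $\beta^\star:=\max_{x,i,j}|b_{ij}(x)|$, I set
\[
\widetilde u_i(x):=\begin{cases}\max\{u_i(x),\,\psi(x)+\eps\}&\text{if }x\in\overline{B_{r_0}(y)},\\ u_i(x)&\text{otherwise,}\end{cases}
\]
and $\widetilde u_j:=u_j$ for $j\neq i$. The constraint $\eps<\eta$ ensures $\widetilde u_i=u_i$ on $\partial B_{r_0}(y)$, so the gluing produces a continuous function; the strict inequality $\psi(y)+\eps>u_i(y)$ propagates by continuity to a neighborhood $B_\rho(y)$ of $y$ on which $\widetilde u_i=\psi+\eps$, so in particular $\widetilde u_i$ is of class $C^1$ there. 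To verify that $\widetilde\ww:=(\widetilde u_1,\ldots,\widetilde u_m)$ belongs to $\hh(0)$, I check the subsolution inequality pointwise almost everywhere (using Proposition~\ref{prop subsol equivalent def}): for $j\neq i$ the fact that $\widetilde u_i\geqslant u_i$ and $b_{ji}\leqslant 0$ only decreases the coupling term, so the inequality is inherited from $\uu$; for $j=i$, on the set $\{\widetilde u_i=u_i\}$ the inequality is likewise inherited, whereas on $\{\widetilde u_i=\psi+\eps\}$ one has
\[
H_i\big(x,D\widetilde u_i(x)\big)+\big(B(x)\widetilde\ww(x)\big)_i\leqslant H_i\big(x,D\psi(x)\big)+\big(B(x)\uu(x)\big)_i+b_{ii}(x)\eps<-2\delta+\beta^\star\eps<-\delta.
\]
Restricted to $B_\rho(y)$, this same estimate yields the strictness of $\widetilde u_i$ at $y$.

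The main subtlety of the construction lies in this last calculation: the upward perturbation $\widetilde u_i\geqslant u_i$ increases the diagonal term $b_{ii}(x)\widetilde u_i(x)$ and therefore \emph{threatens} the $i$-th subsolution inequality, so one must trade part of the initial strictness $2\delta$ against the correction $\beta^\star\eps$. The balance is secured by tuning $\eps$ small relative to $\delta/\beta^\star$, while the convexity of $H_i$ (through the a.e.\ equivalence of Proposition~\ref{prop subsol equivalent def}) handles the gluing at the interface $\{u_i=\psi+\eps\}$ without any further work.
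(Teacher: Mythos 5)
Your proof is correct, and it is a hybrid: one direction matches the paper, the other takes a genuinely different route. For (i)$\Rightarrow$(ii) you do essentially what the paper does — use the failure of the $i$-th supersolution test of $\Phi_{\cdot,i}(y,\cdot)$ at $y$ to produce a $C^1$ strict subtangent $\psi$, replace the $i$-th component by $\max\{u_i,\psi+\eps\}$, and absorb the resulting increase of the diagonal coupling term $b_{ii}(x)\,\eps$ into the strictness margin by taking $\eps$ small; the paper delegates these estimates to ``arguing as in the proof of Proposition \ref{prop Mane matrix}'', and your version just makes the gluing and the choice of $\eps<\min\{\eta,\delta/\beta^\star\}$ explicit. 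For (ii)$\Rightarrow$(i) you diverge: the paper first invokes the mollification Lemma \ref{lemma inferno} to replace $\ww$ by a critical subsolution whose $i$-th component is smooth and strict near $y$, so that its genuine gradient at $y$ lies in $D^-$ and Proposition \ref{prop supersol on A_i} applies — a detour the paper explains is forced because $D^-w_i(y)$ may be empty. You instead run the doubling-of-variables penalization of Proposition \ref{prop pre-comparison} directly against the critical solution $\Phi_{\cdot,i}(y,\cdot)+w_i(y)\1$, which produces superdifferentials of $w_i$ at points $x_\eps\to y$; there the strict inequality with margin $-\delta$ does hold for elements of $D^+w_i$ by the a.e./viscosity equivalence for convex Hamiltonians, and passing to the limit yields $0\leqslant-\delta$. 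This is sound and avoids the regularization lemma entirely for this proposition (at the cost of rerunning the penalization argument with a $\delta$-margin); the paper's choice is motivated by the fact that Lemma \ref{lemma inferno} is reused elsewhere (e.g.\ Remark \ref{oss inferno}). One minor caveat: Proposition \ref{prop subsol equivalent def} is stated globally on $\T^N$, while you need its conclusion only on the neighborhood $V$ where $w_i$ is strict; the local version holds by the same standard argument, but it deserves a word.
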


\begin{dimo}
Let us assume {\em (i)}. Since $y\not\in\A$, the supersolution test for $\Phi_{\cdot,i}(y,\cdot)$ is violated at $(i,y)$. This means that there exists a $ C^1$ function $\psi$ such that  $\psi(x)\leqslant \Phi_{i,i}(y,x)$ for all $x$, with equality if and only if $x=y$, and 
\[
H_i\big(x,D\psi(y)\big)+\big(B(y)\Phi_{\cdot,i}(y,y)\big)_i <0.
\]
We define a function $\ww\in\big(\D{Lip}(\T^N)\big)^m$ by setting 
\[
w_i(\cdot)=\max\{\Phi_{i,i}(y,\cdot),\psi+\eps\},\qquad w_j(\cdot)=\Phi_{j,i}(y,\cdot)\quad\hbox{for $j\not=i$.}
\]
Arguing as in the proof of Proposition \ref{prop Mane matrix} we see that it is possible to choose $\eps>0$ in a such a way that $\ww$ is a critical subsolution. Moreover, since $w_i$ agrees with $\psi+\eps$ in a neighborhood of $y$, there exist $\delta>0$ and an open neighborhood $W$ of $y$ such that $w_i$ is of class $ C^1$ in $W$ and 
\[
 H_i\big(x,Dw_i(x)\big)+\big(B(x)\ww(x)\big)_i < -\delta \quad\hbox{for every $x\in W$}.
\]

Conversely, let assume {\em (ii)}. According to Lemma \ref{lemma inferno}, there exists $\widetilde\ww\in\hh(0)$ such that $\widetilde w_i$ is smooth and strict in a neighborhood of $y$, in particular   
\[
  H_i\big(y,D\widetilde w_i(y)\big)+\big(B(y)\widetilde\ww(y)\big)_i< 0. 
\]
In view of Proposition \ref{prop supersol on A_i} we conclude that $y\not\in\A$.
\end{dimo}

\begin{oss}
Proposition \ref{prop supersol on A_i} expresses the fact, roughly speaking, that the $i$--th component of a critical subsolution cannot be strict at $y$. However, since the supersolution test \eqref{claim supersol on A_i} is void when $D^-u_i(y)$ is empty, this fact cannot be directly used to prove the equivalence stated in Proposition \ref{prop i,y strict}. This is the reason why we needed the regularization Lemma \ref{lemma inferno}.
\end{oss}

We proceed by proving a global version of the previous proposition. We give a definition first.

\begin{definition}
Let $\vv\in\hh(0)$.
We will say that $\vv$ is {\em strict at $y$} if $v_i$ is strict at $y$ for every $i\in\ind$. 
We will say that $\vv$ is strict in an open subset $U$ of $\T^N$ if it is strict at $y$ for every $y\in U$.  
\end{definition}

\begin{teorema}\label{teo Aubry}
There exists $\vv\in\hh(0)$ which is strict in $\T^N\setminus\A$. In particular, 
the Aubry set $\A$ is closed and nonempty. 
\end{teorema}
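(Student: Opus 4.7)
The plan is to construct the desired $\vv$ as a countable convex combination of critical subsolutions, each of which, by Proposition \ref{prop i,y strict}, is strict in a single component on an open neighborhood of a point of $\T^N\setminus\A$, with the neighborhoods covering $\T^N\setminus\A$. Closedness of $\A$ is immediate from the continuity of the Mañé matrix and stability of viscosity solutions (as already noted right after the definition of $\A$), and nonemptiness will follow by contradiction from the minimality in the definition of the critical value.

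\textbf{Covering step.} For every $y\in\T^N\setminus\A$ and every $i\in\ind$, Proposition \ref{prop i,y strict} supplies $\ww^{y,i}\in\hh(0)$, an open neighborhood $V_{y,i}\subset\T^N\setminus\A$ of $y$, and $\delta_{y,i}>0$ such that
\[
H_i\big(x,Dw_i^{y,i}(x)\big)+\big(B(x)\ww^{y,i}(x)\big)_i<-\delta_{y,i}\qquad\text{for a.e. } x\in V_{y,i}.
\]
For each fixed $i$, the family $\{V_{y,i}\}_{y\notin\A}$ is an open cover of $\T^N\setminus\A$. Since $\T^N\setminus\A$ is an open subset of the second-countable torus, it is Lindel\"of, and I can extract a countable subcover $\{V_{n,i}\}_{n\in\N}$ with associated data $\ww^{n,i}\in\hh(0)$ and $\delta_{n,i}>0$. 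Using $B(x)\1=0$ (degeneracy), I normalize by subtracting multiples of $\1$ so that $w_1^{n,i}(0)=0$ for all $n,i$; Proposition \ref{prop a priori Lip} then gives that the countable family $\{\ww^{n,i}\}_{(n,i)\in\N\times\ind}$ is equi-Lipschitz and equi-bounded.

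\textbf{Combination and strictness.} Fix weights $\lambda_{n,i}>0$ with $\sum_{n,i}\lambda_{n,i}=1$ and set
\[
\vv:=\sum_{n,i}\lambda_{n,i}\,\ww^{n,i},
\]
a uniformly convergent series with Lipschitz sum. Using the equivalent a.e. formulation in Proposition \ref{prop subsol equivalent def}(iv), together with the convexity of each $H_i$ in the gradient and the linearity of $B(x)$ in the vector argument, I get, for a.e. $x$ and every $i\in\ind$,
\[
H_i\big(x,Dv_i(x)\big)+\big(B(x)\vv(x)\big)_i\leqslant\sum_{n,i'}\lambda_{n,i'}\Big[H_i\big(x,Dw_i^{n,i'}(x)\big)+\big(B(x)\ww^{n,i'}(x)\big)_i\Big]\leqslant 0,
\]
so $\vv\in\hh(0)$. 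Given $y\notin\A$ and $i\in\ind$, choose $n_0$ with $y\in V_{n_0,i}$. On $V_{n_0,i}$, the $(n_0,i)$-summand is at most $-\lambda_{n_0,i}\delta_{n_0,i}$, while every other summand is nonpositive; hence $v_i$ is strict at $y$ with neighborhood $V_{n_0,i}$ and constant $\lambda_{n_0,i}\delta_{n_0,i}$. This being valid for every $i$, $\vv$ is strict on $\T^N\setminus\A$. Finally, if $\A$ were empty, by compactness of $\T^N$ one would extract for each $i$ a finite subcover from $\{V_{n,i}\}_n$, producing a single $\delta>0$ with $H_i(x,Dv_i)+(B\vv)_i<-\delta$ a.e. for every $i$, i.e., $\vv\in\hh(-\delta\,\1)$, contradicting the minimality of $c=0$.

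\textbf{Main obstacle.} The one technical point that needs care is the manipulation of the infinite series at the level of almost-everywhere gradients: one must justify using $Dv_i=\sum_{n,i'}\lambda_{n,i'}Dw_i^{n,i'}$ inside the convex inequality for $H_i$. This is standard given the equi-Lipschitz bound (e.g.\ work with the partial sums $\vv_N:=\sum_{(n,i)\leqslant N}\lambda_{n,i}\ww^{n,i}$, which are themselves critical subsolutions by convexity of the finite combination, pass to the uniform limit, and conclude via Proposition \ref{prop subsol equivalent def} and the stability of viscosity subsolutions under uniform convergence). Everything else — Lindel\"of extraction, the fact that $\hh(0)$ is stable by addition of $\lambda\1$, and the ``locality'' of the definition of strict at a point — is built directly on results already established in the paper.
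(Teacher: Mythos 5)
Your proof is correct and follows essentially the same route as the paper: invoke Proposition \ref{prop i,y strict} locally, extract a countable subcover, normalize via Proposition \ref{prop a priori Lip} to get equi-boundedness, take a countable convex combination exploiting convexity of the $H_i$ and linearity of the coupling, and derive nonemptiness of $\A$ by compactness against the minimality of $c$. The only cosmetic difference is that the paper builds one subsolution $\vv^i$ per component and then averages over $i$, whereas you combine everything in a single sum indexed by $(n,i)$; your extra care with the term-by-term differentiation of the series (via partial sums and stability) is a point the paper glosses over.
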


\begin{dimo}
Fix $i\in\ind$. We first construct a critical subsolution $\vv^i$ whose $i$--th component is strict in $\T^N\setminus\A$. According to Proposition \ref{prop i,y strict}, for every $y\in\T^N\setminus\A$ there exist an open neighborhood $W_y$ of $y$, a critical subsolution  $\ww^y$ and $\delta_y>0$ such that 
\begin{equation}\label{claim local strict}
H_i\big(x,Dw^y_i(x)\big)+\big(B(x)\ww^y(x)\big)_i < -\delta_y \quad\hbox{for a.e. $x\in W_y$}
\end{equation}
The family $\{W_y\,:\,y\in\T^N\setminus\A\,\}$  is an open covering of $\T^N\setminus\A$, from which we can extract a countable covering $(W_n)_n$ of $\T^N\setminus\A$. For each $n\in\N$, let us denote by $(\ww^n,\delta_n)$ the corresponding pair in $\hh(0)\times (0,+\infty)$ that satisfies \eqref{claim local strict} in $W_n$.  
%
Up to subtracting to each critical subsolution $\ww^n$ a vector of the form $k_n\1$, we can moreover assume that $w^n_1(0)=0$. Hence the functions $\ww^n$ are componentwise equi--Lipschitz and equi--bounded in view of Proposition \ref{prop a priori Lip}, in particular
the function 
\[
 \vv^i(x)=\sum_{n=1}^\infty \frac{1}{2^n}\,\ww^n(x),\qquad\hbox{$x\in\T^N$}
\]
is well defined and belongs to $\big(\D{Lip}(\T^N)\big)^m$. By convexity of the Hamiltonians, for almost every $x\in\T^N$ we get 
\begin{eqnarray*}
  H_i\big(x,Dv^i_i(x)\big)+\big(B(x)\vv^i(x)\big)_i
  \leqslant
  \sum_{n=1}^\infty \frac{1}{2^n}\Big( H_i\big(x,Dw^n_i(x)\big)+\big(B(x)\ww^n(x)\big)_i\Big)
  \leqslant
 0.  
\end{eqnarray*}
Moreover, the above inequalities hold with $-{\delta_k}/{2^k}$ in place of $0$ almost everywhere in $W_k$, for every $k\in\N$. This shows that 
$\vv^i$ is a critical subsolution, strict in $\T^N\setminus\A$.
Now set 
\[
 \vv(x)=\sum_{i=1}^m \frac{1}{m} \vv^i(x),\qquad x\in\T^N.
\]
A similar argument shows that $\vv$ is a critical subsolution that satisfies the assertion. 

If $\A=\varnothing$, by compactness we would have $\hh(-\delta)\not=\varnothing$ for some $\delta>0$, contradicting the definition of the critical value $c=0$. 
\medskip
\end{dimo}


In view of Proposition \ref{prop i,y strict}, we have the following characterization:

\begin{teorema}\label{teo char A}
Let $y\in\T^N$. The following are equivalent facts:
\begin{itemize}
\item[\em (i)] $y\notin\A$;\medskip
\item[\em (ii)] there exists $\ww\in\hh(0)$ which is strict at $y$;\medskip
\item[{\em (iii)}] there exists $\ww\in\hh(0)$ and $i\in\ind$ such that $w_i$ is strict at $y$.
\medskip
\end{itemize}
\end{teorema}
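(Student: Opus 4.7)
The main content of this theorem beyond what has already been proved is the implication to (ii), since the equivalence of (i) and (iii) is essentially Proposition \ref{prop i,y strict}. My plan is to organize the proof as a cycle (ii) $\Rightarrow$ (iii) $\Rightarrow$ (i) $\Rightarrow$ (ii), with only the last implication requiring real work.

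The implication (ii) $\Rightarrow$ (iii) is immediate from the definition of ``strict at $y$'' for $\vv$, which requires strictness of every component: fixing any $i \in \ind$ yields (iii). The implication (iii) $\Rightarrow$ (i) is precisely the direction (ii) $\Rightarrow$ (i) of Proposition \ref{prop i,y strict} applied to the given index $i$.

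For the nontrivial implication (i) $\Rightarrow$ (ii), my plan is to apply Proposition \ref{prop i,y strict} once for each index $i \in \ind$: this produces critical subsolutions $\ww^1,\dots,\ww^m \in \hh(0)$, open neighborhoods $V_i$ of $y$, and constants $\delta_i > 0$ such that
\[
H_i\bigl(x,Dw^i_i(x)\bigr)+\bigl(B(x)\ww^i(x)\bigr)_i < -\delta_i \qquad \text{for a.e. } x \in V_i.
\]
Setting $V = \bigcap_{i=1}^m V_i$ (still an open neighborhood of $y$) and $\delta = \min_i \delta_i$, I would then form the convex combination
\[
\vv := \frac{1}{m}\sum_{i=1}^m \ww^i.
\]
By convexity of each $H_j$ and linearity of $B(x)$, for a.e. $x \in \T^N$ and each $j \in \ind$,
\[
H_j\bigl(x,Dv_j(x)\bigr)+\bigl(B(x)\vv(x)\bigr)_j \leqslant \frac{1}{m}\sum_{i=1}^m \Bigl[H_j\bigl(x,Dw^i_j(x)\bigr)+\bigl(B(x)\ww^i(x)\bigr)_j\Bigr],
\]
so $\vv \in \hh(0)$. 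Moreover, for $x \in V$, all $m$ terms on the right are $\leqslant 0$ (since each $\ww^i$ is a critical subsolution), and the single term with $i=j$ is additionally $\leqslant -\delta$. Hence for every $j \in \ind$,
\[
H_j\bigl(x,Dv_j(x)\bigr)+\bigl(B(x)\vv(x)\bigr)_j \leqslant -\delta/m \qquad \text{for a.e. } x \in V,
\]
so $v_j$ is strict at $y$ for every $j$, meaning $\vv$ itself is strict at $y$.

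I do not expect any genuine obstacle here: Proposition \ref{prop i,y strict} supplies component-wise strictness essentially for free, and the averaging trick combines them without difficulty because the off-diagonal contributions in the averaging are automatically controlled by the subsolution inequality (rather than degrading to some positive quantity). The only small care needed is to ensure the neighborhood and the strictness constant can be taken uniform across $i$, which is why I intersect the $V_i$ and take the minimum of the $\delta_i$ at the start.
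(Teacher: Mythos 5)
Your proof is correct and is essentially the paper's intended argument: the authors state the theorem as an immediate consequence of Proposition \ref{prop i,y strict}, and the only step they leave implicit — combining the $m$ component-wise strict subsolutions into one that is strict in every component via the convex combination $\frac{1}{m}\sum_i \ww^i$ — is exactly the averaging trick they themselves use in the proof of Theorem \ref{teo Aubry}. No gaps.
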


We end this section by extending to weakly coupled systems a result which is well known in the case of a single critical equation. 

\begin{prop}\label{prop S(t) su A}
\quad$\displaystyle{\A=\bigcap_{\ww\in\hh(0)} \left\{ y\in\T^N\,:\,\big(\S(t)\ww\big)(y)=\ww(y)\ \quad\hbox{for every $t>0$}\,\right\}.}$
\end{prop}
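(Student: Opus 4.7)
The plan is to prove the two inclusions separately. For the easy inclusion $\A \subseteq \bigcap_{\ww} \{y : \S(t)\ww(y) = \ww(y)\ \forall t>0\}$, I would fix $y \in \A$ and $\ww \in \hh(0)$. From Proposition \ref{prop invariant} applied with $a = c = 0$, we already have $\S(t)\ww \geqslant \ww$ on $\T^N$, so only the reverse inequality at $y$ is needed. The right comparison object is the maximal critical subsolution with prescribed $i$-th component at $y$: fix any $i \in \ind$ and set $\uu := \Phi_{\cdot,i}(y,\cdot) + w_i(y)\1$. Since $y \in \A$, $\uu$ is a critical solution, and adding a multiple of $\1$ preserves this because $B(x)\1 = 0$. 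By Proposition \ref{Man sous}(iii), $\ww \leqslant \uu$ on $\T^N$, with equality of the $i$-th coordinate at $y$ realizing the (nonpositive) maximum of $w_i - u_i$. Proposition \ref{prop pre-comparison} then upgrades this to $\ww(y) = \uu(y)$. Monotonicity of the semigroup (Proposition \ref{prop semigroup}(ii)) together with Proposition \ref{prop fixed point} gives $\S(t)\ww \leqslant \S(t)\uu = \uu$ on $\T^N$; evaluating at $y$ closes the sandwich.

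For the reverse inclusion I would argue the contrapositive. Let $y \notin \A$. Theorem \ref{teo Aubry} produces $\vv \in \hh(0)$ strict on $\T^N \setminus \A$; pick $r, \delta > 0$ with
\[
H_i(x, Dv_i(x)) + (B(x)\vv(x))_i < -\delta \quad \text{a.e. in } B_r(y), \text{ for every } i\in\ind.
\]
I will exhibit a small $\tau > 0$ such that $\S(\tau)\vv(y) > \vv(y)$ componentwise, which, taking $\ww = \vv$, places $y$ outside the right-hand side.

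The idea is to construct a subsolution of the evolutionary system \eqref{evo wcoupled system} that strictly grows at $y$. Fix a smooth bump function $\phi : \T^N \to [0,1]$ supported in $B_r(y)$ with $\phi(y) = 1$, and for $\eta, \tau > 0$ to be chosen define
\[
\widetilde\vv(t,x) := \vv(x) + \eta\, t\, \phi(x)\, \1, \qquad (t,x) \in [0,\tau]\times\T^N.
\]
The degeneracy $B(x)\1 = 0$ makes the coupling term insensitive to the perturbation, so a.e.\ in $\T^N$ one computes, for each $i$,
\[
\partial_t \widetilde v_i + H_i(x, D\widetilde v_i) + (B(x)\widetilde\vv)_i = \eta\phi(x) + H_i\!\left(x, Dv_i(x) + \eta t D\phi(x)\right) + (B(x)\vv(x))_i.
\]
Outside $B_r(y)$ the right-hand side reduces to $H_i(x,Dv_i) + (B\vv)_i \leqslant 0$ since $\vv \in \hh(0)$; inside $B_r(y)$, the strict margin $-\delta$ and uniform continuity of $H_i$ on the bounded set containing $Dv_i + \eta t D\phi$ allow me to pick $\eta$ small and then $\tau$ small enough so the expression is bounded above by $-\delta/2$. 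Upgrading from the a.e.\ inequality to the viscosity formulation on all of $\T^N$ is routine by convexity and the Lipschitz character of $\widetilde\vv$ (in the spirit of Proposition \ref{prop subsol equivalent def}). The evolutionary comparison principle quoted just before Proposition \ref{prop existence evo wcs}, applied to $\widetilde\vv$ and $\S(t)\vv$ which agree at $t=0$, yields $\widetilde\vv \leqslant \S(t)\vv$ on $[0,\tau]\times\T^N$. Evaluating at $(\tau,y)$ gives $\S(\tau)\vv(y) \geqslant \vv(y) + \eta\tau\1 > \vv(y)$, the desired contradiction.

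The main obstacle will be the last passage from the almost-everywhere computation to the genuine viscosity subsolution property of $\widetilde\vv$ on all of $\T^N$, including the transition across $\partial B_r(y)$ where $\widetilde\vv$ is merely Lipschitz in $x$. Smoothness of $\phi$, convexity of the $H_i$, and the uniform Lipschitz bound on $\vv$ make this a technical verification rather than a substantive difficulty; the continuity modulus of $H_i$ must be applied on a bounded set of momenta determined by the Lipschitz constant of $\vv$ and $\|D\phi\|_\infty$, which is why $\eta$ and $\tau$ have to be chosen in that order.
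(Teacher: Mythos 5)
Your proof is correct. The first inclusion is essentially the paper's argument: both rest on the maximality of the Ma\~n\'e vector $\Phi_{\cdot,i}(y,\cdot)+w_i(y)\1$, the fixed--point property of critical solutions (Proposition \ref{prop fixed point}), the monotonicity of $\S(t)$, and the lower bound $\S(t)\ww\geqslant\ww$ from Proposition \ref{prop invariant}; your extra invocation of Proposition \ref{prop pre-comparison} to get the full vector equality $\ww(y)=\uu(y)$ is harmless but not needed, since one can simply run the sandwich argument component by component as the paper does. For the reverse inclusion you take a genuinely different route. The paper picks, via Proposition \ref{prop i,y strict}, a critical subsolution whose $i$--th component is $C^1$ and strict near $y$, observes that (by Proposition \ref{prop invariant} and the assumption $\S(t)\vv(y)=\vv(y)$) the time--independent function $v_i$ is a subtangent to $(\S(t)\vv)_i$ at $(t_0,y)$, applies the supersolution test, and lets $t_0\to0^+$; this is a purely pointwise viscosity argument requiring no comparison principle. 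You instead build the explicit evolutionary strict subsolution $\vv+\eta t\phi\1$ (exploiting $B(x)\1=0$ so the coupling is blind to the perturbation) and invoke the parabolic comparison principle to force $\S(\tau)\vv(y)>\vv(y)$. Your route costs the routine but real step of upgrading the a.e.\ inequality for the Lipschitz function $\widetilde\vv$ to the viscosity subsolution property --- legitimate here by convexity of the $H_i$ in $p$ (and of $(p_t,p_x)\mapsto p_t+H_i(x,p_x)$), exactly as the paper uses elsewhere (Lemma \ref{lemma inferno}, Appendix \ref{appendix semiconcave}) --- but in exchange it avoids any regularity of the test subsolution at $y$ and yields the quantitative lower bound $\S(\tau)\vv(y)\geqslant\vv(y)+\eta\tau\1$, which is slightly more information than the mere failure of the identity $\S(t)\ww(y)=\ww(y)$.
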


\begin{dimo}
Let us denote by $\A'$ the set appearing at the right--hand side of the above equality. Fix a point $y\in\A$ and let $\ww$ be any critical subsolution. For every fixed index $i\in\ind$, the function $\uu^i=\Phi_{\cdot,i}(y,\cdot)+w_i(y)\1$ satisfies $\ww\leqslant \uu^i$ in $\T^N$ and $w_i(y)=u_i(y)$. Moreover, $\uu^i$ is a critical solution, hence it is a fixed point for the semigroup $\S(t)$ by Proposition \ref{prop fixed point}. By monotonicity of the semigroup, we have 
\[
 w_i(y)\leqslant \big(\S(t)\ww\big)_i(y) \leqslant \big(\S(t)\uu^i\big)_i(y)=u^i_i(y)\quad\hbox{for every $t>0$,}
\]
hence all the inequalities must be equalities, in particular $\big(\S(t)\ww\big)_i(y)=w_i(y)$ for every $t>0$. This being true for every $i\in\ind$ and $\ww\in\hh(0)$, we conclude that $y\in\A'$.

To prove the converse inclusion, pick $y\in\A'$ and assume by contradiction that $y\not\in\A$. Fix $i\in\ind$ and take a 
critical subsolution $\vv$ such that $v_i$ is of class $C^1$ and strict in a neighborhood of $y$, according to Proposition \ref{prop i,y strict}. By Proposition \ref{prop invariant}, the map $(t,x)\mapsto v_i(x)$ is a subtangent to $\big(\S(t)v)_i(x)$ at $(t_0,y)$ for every $t_0>0$ and since the latter is a solution of the evolutionary system \eqref{evo wcoupled system} we get 
\[
 H_i(y,Dv_i(y))+\big(B(y)\S(t_0)\vv(y)\big)_i\geqslant 0.
\]
By sending $t_0\to 0^+$ we get a contradiction with the fact that $v_i$ is strict at $y$. 
\bigskip
\end{dimo}

\end{section}
\begin{section}{Regularization}\label{regularization}
The aim of this section is to show how a strict critical subsolution can be regularized outside the Aubry set. In the case of a single critical equation, it is known that such procedure can be performed in such a way that the output is a strict critical subsolution which is, in addition, of class $ C^1$ on the whole torus, see \cite{BernardC11,FSC1,FS05}. This result holds for Hamiltonians that are locally Lipschitz in $(x,p)$ and strictly convex in $p$ and the proof relies on the following two facts: first, any critical subsolution is differentiable on the Aubry set and, second, its gradient is independent of the specific subsolution chosen. This latter rigidity property holds for weakly couples systems too, as we will show at the end of the current section. What prevents us to extend to systems the existence of $ C^1$ strict critical subsolutions  is the lack of information on differentiability properties of critical subsolutions on the Aubry set.\smallskip    

We first deal with the regularization issue. The tools are not new and are mainly borrowed from \cite{FSC1,FS05}. However, we provide a proof for the reader's convenience.

We start with a local regularization argument.

\begin{lemma}\label{lemma local regularize}
Let $\uu\in\hh(0)$ and assume that, for some $r>0$, $\delta>0$ and $y\in\T^N\setminus\A$ and for every $i\in\ind$,
\[
H_i\big(x,Du_i(x)\big)+\big(B(x)\uu(x)\big)_i <-\delta \qquad\hbox{for a.e. $x\in B_{2r}(y)$.} 
\]
Then, for every $\eps>0$, there exists $\uu^\eps\in\hh(0)$ such that 
\begin{itemize}
 \item[\em (i)] \quad $\|\uu^\eps-\uu\|_\infty<\eps$;\medskip
 \item[\em (ii)] \quad $\uu^\eps=\uu$\quad in $\T^N\setminus B_r(y)$;\medskip
 \item[\em (iii)] \quad $\uu^\eps$ is of class $ C^\infty$ in $B_{r/2}(y)$ and satisfies
\begin{equation}\label{claim local regularization}
 H_i\big(x,Du^\eps_i(x)\big)+\big(B(x)\uu^\eps(x)\big)_i <-\frac{2}{3}\,\delta \qquad\hbox{for every $x\in B_{r/2}(y)$.}
\end{equation}
\end{itemize}
\end{lemma}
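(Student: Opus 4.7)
The plan is to adapt the regularization by convolution-plus-cutoff argument used in Lemma~\ref{lemma inferno} (the ``inferno'' lemma), but this time applied to \emph{all} components of $\uu$ simultaneously, and with the added care that we need uniform closeness to $\uu$ (so we cannot afford to shift components upward by $\|\rho_n*u_i-u_i\|_\infty$ as was done there). The starting observation is that, since each Hamiltonian $H_i$ only sees the gradient of the $i$-th component and the coupling enters linearly through $B(x)\uu(x)$, mollifying $\uu$ componentwise is compatible with all the subsolution inequalities modulo moduli of continuity.

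Concretely, let $(\rho_n)_n$ be standard mollifiers on $\R^N$ and set $\vv^n:=\rho_n*\uu$. For $n>1/r$, any point $x\in B_{r+1/n}(y)$ has $B_{1/n}(x)\subset B_{2r}(y)$, so that Jensen's inequality, the uniform continuity of $H_i$ on the equibounded set where the gradients lie, and the continuity of the coupling $B$, yield the estimate
\[
H_i\big(x,Dv_i^n(x)\big)+\big(B(x)\vv^n(x)\big)_i\leqslant -\delta+\omega(1/n)+\eps_n
\qquad\hbox{for a.e. $x\in B_{r+1/n}(y)$,}
\]
with $\eps_n\to 0$ as $n\to\infty$, along the lines of \eqref{eq3 inferno}--\eqref{eq5 inferno}. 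So the mollifications are already strict critical subsolutions with margin close to $\delta$ on a neighborhood of $\overline{B_r(y)}$.

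Next, I would pick a smooth cutoff $\phi:\T^N\to[0,1]$ with $\phi\equiv 1$ on $B_{r/2}(y)$ and $\mathrm{supp}\,\phi\subset B_r(y)$, and define
\[
\uu^\eps(x):=\phi(x)\vv^n(x)+\big(1-\phi(x)\big)\uu(x).
\]
Properties (i) and (ii) are immediate: $\uu^\eps\equiv\uu$ outside $B_r(y)$, and $\|\uu^\eps-\uu\|_\infty\leqslant \|\vv^n-\uu\|_\infty=:d_n\to 0$, so $d_n<\eps$ for $n$ large. On $B_{r/2}(y)$ we have $\uu^\eps=\vv^n$, which is $C^\infty$ and satisfies the strict inequality \eqref{claim local regularization} with margin $\tfrac{2}{3}\delta$ pointwise, as soon as $n$ is chosen large enough that $\omega(1/n)+\eps_n<\tfrac{\delta}{3}$. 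Outside $B_r(y)$ the subsolution condition is inherited from $\uu$.

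The only delicate region is the transition annulus $B_r(y)\setminus B_{r/2}(y)$: here $\uu^\eps$ is a convex combination of $\vv^n$ and $\uu$, but the gradient formula picks up an extra term $(v_i^n-u_i)D\phi$. The point is that this extra term is controlled by $d_n\|D\phi\|_\infty$, which can be absorbed into a continuity modulus of $H_i$; thereafter, convexity of $H_i$ reduces the computation to a convex combination of the subsolution inequalities for $\vv^n$ and for $\uu$ (together with the identity $(B(x)\uu^\eps(x))_i=\phi(x)(B(x)\vv^n(x))_i+(1-\phi(x))(B(x)\uu(x))_i$, which is automatic by linearity of $B$). Since both $\vv^n$ and $\uu$ are strict with margin close to $\delta$ on this annulus, the combined estimate gives the subsolution inequality for $\uu^\eps$ in the whole annulus, with a margin that can again be made smaller in absolute value than $\tfrac{2}{3}\delta$ by taking $n$ large. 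The main obstacle is precisely this annulus bookkeeping, where one must simultaneously absorb three error sources (the Jensen-type error $\omega(1/n)+\eps_n$ from convolution, the modulus error $\omega(d_n\|D\phi\|_\infty)$ from replacing $Du_i^\eps$ by the convex combination without the $D\phi$ term, and the fact that the subsolution $\uu$ is strict only a.e.) into the available slack $\delta-\tfrac{2}{3}\delta=\tfrac{\delta}{3}$.
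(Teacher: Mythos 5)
Your proposal is correct and follows essentially the same route as the paper: the paper defines $w^n_i=\phi\,(\rho_n*u_i)+(1-\phi)u_i$ for every $i$ and verifies the subsolution and strictness properties by the same Jensen-plus-modulus computation as in Lemma~\ref{lemma inferno}, which is exactly your convolution-plus-cutoff construction and annulus bookkeeping. You also correctly identify why the upward shift by $\|\rho_n*u_i-u_i\|_\infty$ used in Lemma~\ref{lemma inferno} is unnecessary here, namely that all components are strict on $B_{2r}(y)$ so the $O(d_n)$ perturbation of the coupling term can be absorbed into the margin $\delta$.
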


\begin{dimo}
Let $\phi:\T^N\to [0,1]$ be a ${C}^\infty$ function, compactly supported in $B_r(y)$ and such that $\phi\equiv 1$ in $B_{r/2}(y)$. 
Let $(\rho_n)_n$ be a sequence of standard mollifiers  on $\R^N$.
For every $n\in\N$, we define a function $\ww^n\in \big(\D{Lip}(\T^N)\big)^m$ by setting
\[
  w^n_i(x)=\phi(x)(\rho_n * u_i)(x)+\big(1-\phi(x)\big)u_i(x)\qquad\hbox{for every $x\in\T^N$ and $i\in\ind$.}
\]
It is apparent by the definition that $\ww^n$ is of class $  C^\infty$ in $B_{r/2}(y)$ and agrees with $\uu$ outside $B_r(y)$. Arguing as in the proof of Lemma \ref{lemma inferno}, we see that it is possible to choose $n$ large enough in such a way that 
$\ww^n$ is a critical subsolution and satisfies \eqref{claim local regularization}. Since $\ww^n\ucv\uu$ in $\T^N$, the assertion follows by setting $\uu^\eps:=\ww^n$ for a sufficiently large $n$.\medskip
\end{dimo}

We now prove the announced regularization result. 

\begin{teorema}\label{teo smooth subsol}
There exists a critical subsolution which is strict and $  C^\infty$ in $\T^n\setminus \A$. 
More precisely, for every critical subsolution $\vv$ which is strict in $\T^N\setminus\A$ and for every $\eps>0$, there exists $\vv^\eps\in\hh(0)$ such that
\begin{itemize}
 \item[\em (i)] \quad $\|\vv^\eps-\vv\|_\infty<\eps$;\medskip
 \item[\em (ii)] \quad $\vv^\eps=\vv$\quad on $\A$;\medskip
 \item[\em (iii)] \quad $\vv^\eps$ is $ C^\infty$ and strict in $\T^n\setminus \A$.\medskip
\end{itemize}
Moreover, the set of such smooth and strict subsolutions is dense in $\hh(0)$. 
\end{teorema}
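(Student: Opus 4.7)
The plan is to construct $\vv^\eps$ by iteratively applying Lemma \ref{lemma local regularize} along a locally finite cover of $\T^N\setminus\A$. Starting from any critical subsolution $\vv\in\hh(0)$ strict in $\T^N\setminus\A$ (whose existence is granted by Theorem \ref{teo Aubry}), I would first extract a countable, locally finite cover $\{B_{r_n/2}(y_n)\}_{n\in\N}$ of $\T^N\setminus\A$ such that $B_{2r_n}(y_n)\subset\T^N\setminus\A$ and the hypothesis of Lemma \ref{lemma local regularize} (with some $\delta_n>0$) is satisfied by $\vv$ on each $B_{2r_n}(y_n)$. This is possible because $\T^N\setminus\A$ is paracompact (being open in $\T^N$, hence second countable and locally compact) and $\vv$ is strict at each of its points.

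Then I would set $\vv^0=\vv$ and define $\vv^n$ inductively by applying Lemma \ref{lemma local regularize} to $\vv^{n-1}$ on $B_{r_n/2}(y_n)$ with small parameter $\eps/2^n$. Each $\vv^n$ then belongs to $\hh(0)$, agrees with $\vv^{n-1}$ outside $B_{r_n}(y_n)$, satisfies $\|\vv^n-\vv^{n-1}\|_\infty<\eps/2^n$, is strict on $\T^N\setminus\A$, and is $C^\infty$ on $B_{r_n/2}(y_n)$. Two structural points need checking here: first, $\vv^{n-1}$ must still be strict on a neighborhood of $y_n$ to apply the lemma, which follows because only finitely many previous steps affect $y_n$ (by local finiteness) and each such step only reduces the strict margin by the factor $2/3$ allowed by the lemma; second, the smoothness gained at a previous step $k<n$ on $B_{r_k/2}(y_k)$ is not destroyed by the step-$n$ modification, since the latter replaces $\vv^{n-1}$ on $B_{r_n}(y_n)$ by a convex combination of itself with its mollification (cf. the proof of Lemma \ref{lemma local regularize}), both of which are $C^\infty$ on $B_{r_k/2}(y_k)\cap B_{r_n}(y_n)$.

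The uniform limit $\vv^\eps=\lim_n\vv^n$ then lies in $\hh(0)$ and satisfies $\|\vv^\eps-\vv\|_\infty<\eps$ and $\vv^\eps=\vv$ on $\A$ (nothing is ever modified off $\bigcup_n B_{r_n}(y_n)\subset\T^N\setminus\A$). For each $x\in\T^N\setminus\A$, local finiteness gives a neighborhood $U$ of $x$ meeting only finitely many $B_{r_n}(y_n)$; for $n$ large enough, $\vv^n=\vv^\eps$ on $U$, so $\vv^\eps$ inherits smoothness and strictness on $U$ from $\vv^n$. I expect the main technical obstacle to be precisely this bookkeeping: ensuring that the strict margin does not collapse in the limit. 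It is handled by the local finiteness of the cover, which guarantees that at each $x\in\T^N\setminus\A$ only finitely many strict-margin reductions occur along the iteration.

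Finally, for the density statement I would use the convex combination $\ww_t:=(1-t)\ww+t\vv$ for any $\ww\in\hh(0)$ and $t\in[0,1]$. This lies in $\hh(0)$ by convexity of $\hh(0)$, and by convexity of each $H_i$ it is strict on $\T^N\setminus\A$ for every $t>0$ (with strict margin at least $t\delta_y$ wherever $\vv$ is strict with margin $\delta_y$). Choosing $t$ small enough so that $\|\ww_t-\ww\|_\infty=t\|\vv-\ww\|_\infty<\eps/2$ and applying the first part to $\ww_t$ with parameter $\eps/2$ then yields a smooth strict critical subsolution within $\eps$ of $\ww$.
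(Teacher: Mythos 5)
Your proof is correct, but it assembles the local regularizations differently from the paper. The paper applies Lemma \ref{lemma local regularize} once per element of a locally finite cover to produce a family $(\uu^n)_n$ of subsolutions, each smooth on its own piece, and then glues them all \emph{simultaneously} through a smooth partition of unity, setting $\vv^\eps=\sum_n\varphi_n\uu^n$; the price is the derivative estimate on $\sum_k(u_i^{k}-v_i)D\varphi_{k}$, controlled by the a priori smallness condition \eqref{eq pain}, plus a separate argument (via the choice $\delta(x)\leqslant d(x,\A)^2$) to verify the subsolution property a.e.\ on $\A$. Your iterative surgery avoids both: each $\vv^n$ lies in $\hh(0)$ by the lemma itself, so the uniform limit is a subsolution by stability, and no partition-of-unity derivative bound is needed. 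What you pay is the bookkeeping across infinitely many surgeries, and there you need slightly more than the \emph{statement} of Lemma \ref{lemma local regularize}: (a) its conclusion \eqref{claim local regularization} gives the margin $-\tfrac{2}{3}\delta$ only on $B_{r/2}(y)$, whereas your induction needs $\vv^n$ to stay strict on the whole modified ball $B_{r_n}(y_n)$ — otherwise the hypothesis of the lemma could fail at a later step whose doubled ball meets the annulus $B_{r_n}(y_n)\setminus B_{r_n/2}(y_n)$; this is fine because the proof (via the analogue of \eqref{eq2 inferno}) actually establishes the margin a.e.\ on all of $B_r(y)$, but it must be said; (b) the preservation of smoothness under later surgeries likewise invokes the internal convex-combination structure $\phi\,(\rho_n * u_i)+(1-\phi)u_i$ of the proof rather than the statement; and (c) you need the enlarged balls $B_{2r_n}(y_n)$, not merely the half-balls, to form a locally finite family in $\T^N\setminus\A$, which requires the standard exhaustion/shrinking construction rather than bare paracompactness. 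With these points made explicit the argument closes, and your density step coincides with the paper's.
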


\begin{dimo}
We first show how to regularize a subsolution which is strict outside the Aubry set. Let $\vv$ be such a subsolution (given by Theorem \ref{teo Aubry}) and fix $\eps>0$. Since $\vv$ is strict in $\T^N\setminus\A$, there exists a continuous and non--negative function $\delta:\T^N \to \R$ with $\delta^{-1}\left(\{0\}\right) = \A$ such that 
$$
H_i(x,Dv_i)+\big(B(x)\vv(x)\big)_i \leqslant -\delta(x) \quad\hbox{in $\T^N$}
$$
for every $i\in\ind$. Clearly, it is not restrictive to assume that the inequality $\delta(x)\leqslant \min\{\eps/2 , d(x,\A)^2\}$ holds  for every $x\in\T^N$. In view of Lemma \ref{lemma local regularize}, we can find a locally finite covering $(U_n)_n$ of $\T^N\setminus\A$ by open sets compactly contained in $\T^N\setminus\A$ and a sequence $(\uu^n)_n$ of critical subsolutions such that each $\uu^n$ is $  C^\infty$ in $U_n$ and satisfies 
\begin{eqnarray}
H_i(x,Du^n_i)+\big(B(x)\uu^n(x)\big)_i \leqslant -\frac{2}{3}\,\delta(x) \qquad\hbox{for every $x\in U_n$,}\label{eq u_n strict}\nonumber\\
|\uu^n(x)-\vv(x)|\leqslant\delta(x)\qquad\hbox{for every $x\in\T^N$.}\label{eq u_n control}
\end{eqnarray}
Set 
\[
 \delta_n:=\inf_{x\in U_n} \delta(x)\qquad\hbox{for every $n\in\N$}
\]
and choose a sequence $(\eta_n)_n$ in $(0,1)$ such that, for every $x\in\T^N$ and $n\in\N$, the following holds: 
\begin{equation}\label{def eta_n}
|H(x,p)-H(x,p')|<\frac{\delta_n}{6}\qquad\hbox{for all $p,p'\in B_{\kappa+1}$ with $|p-p'|<\eta_n$,}
\end{equation}
where $\kappa$ denotes a common Lipschitz constant for the critical subsolutions, in particular for all the $\uu^n$.  
Last, take a smooth partition of unity $(\varphi_n)_n$ subordinate to $(U_n)_n$ and choose the functions $\uu^n$ in such a way that the quantities $\|\uu^n-\vv\|_\infty$, which can be be made as small as desired, satisfy
\begin{equation}\label{eq pain}
 \sum_{\substack{k\in \N \\ U_{k}\cap U_n\neq \varnothing }} \|\uu^k-\vv\|_\infty \left\| D\varphi_{k}\right\|_\infty<\eta_n\qquad\hbox{for every $n\in\N$}.
\end{equation}
That is always possible since the covering $(U_n)_n$ is locally finite. 

We now define $\vv^\eps:\T^N\to\R^m$ by setting
\[
 \vv^\eps(x)=\sum_{n=1}^\infty \varphi_n(x)\uu^n(x)\quad\hbox{in $\T^N\setminus\A$}
\qquad
\hbox{and}
\qquad
\vv^\eps(x)=\vv(x)\quad\hbox{on $\A$.} 
\]
By definition, $\vv^\eps$ satisfies assertion {\em (ii)} and is $ C^\infty$ in $\T^N\setminus\A$. From \eqref{eq u_n control} we infer that $|\vv^\eps(x)-\vv(x)|\leqslant\delta(x)$ in $\T^N\setminus\A$, which shows at once that $\vv^\eps$ is continuous in $\T^N$ and that it satisfies assertion {\em (i)}. Moreover, by taking into account \eqref{eq pain} and the fact that $\sum D\varphi_k\equiv 0$, one obtains, for every  $x\in U_n$ and $ i\in \ind$, that 
\begin{equation}\label{eq stima}
\Big| Dv^\eps_i(x)-\sum_{\substack{k\in \N \\ U_{k}\cap U_n\neq \varnothing }}\varphi_{k}(x)Du_i^{k}(x) \Big| 
=
\Big| \sum_{\substack{k\in \N \\ U_{k}\cap U_n\neq \varnothing }}\big(u_i^{k}(x)-v(x)\big) D\varphi_{k}(x) \Big|
<
\eta_n,
\end{equation}
in particular 
\[
 |Dv^\eps_i(x)|
  \leqslant 
  \eta_n+
  \sum_{\substack{k\in \N \\ U_{k}\cap U_n\neq \varnothing }}\varphi_{k}(x)|Du_i^{k}(x)|
  \leqslant
  1+\kappa.
\]
We infer that  $\vv^\eps$ is Lipschitz--continuous in $\T^N$. In order to prove that $\vv^\eps$ is a critical subsolution and is strict in $\T^N\setminus\A$, it will be enough to show that  
$$
H_i(x,Dv^\eps_i(x))+\big(B(x)\vv^\eps(x)\big)_i \leqslant -\frac{\delta(x)}{2} \qquad\hbox{for a.e. $x\in\T^N$},
$$
for all $i\in \ind$.

Recall the Lipschitz functions $\vv^{\eps}$ and $\vv$ coincide on the Aubry set. Setting $\ww^{\eps} = \vv^{\eps} - \vv$, we infer that if $x_0\in \A$, then $|\ww^{\eps}(x)-\ww^{\eps}(x_0)| \leqslant d(x,\A)^2 \leqslant d(x,x_0)^2$. Hence $\ww^{\eps}$ is differentiable on $\A$ with vanishing differential and $D\vv^\eps(x)=D\vv(x)$  for almost every $x\in\A$. Hence, it suffices to establish the claim in the complementary of $\A$. To this aim, 
by recalling the definition of $\eta_n$ and by making use of \eqref{eq stima} and of Jensen inequality, we get that, for every $x\in U_n$ and $i\in \ind$,
\begin{align*}
H_i\big(x,Dv^\eps_i(x)\big)&+\big(B(x)\vv^\eps(x)\big)_i 
\leqslant H_i \Big( x, \sum_{\substack{k\in \N \\ U_{k}\cap U_n\neq \varnothing }}\varphi_{k}(x)Du_i^{k}(x) \Big)+\frac{\delta_n}{6}\\
&\qquad\qquad + \sum_{\substack{k\in \N \\ U_{k}\cap U_n\neq \varnothing }}\varphi_{k}(x)  \big(B(x)\uu^{k}(x)\big)_i \\
&\leqslant 
\sum_{\substack{k\in \N \\ U_{k}\cap U_n\neq \varnothing }}\varphi_{k}(x) \Big( H_i\big(x,Du^{k}_i(x)\big)+\big(B(x)\uu^{k}(x)\big)_i \Big) +\frac{\delta_n}{6}\\
&< -\frac{2}{3}\,\delta(x)+\frac{\delta_n}{6} \leqslant -\frac{\delta(x)}{2}. 
\end{align*}
This concludes the proof of the first part of the statement.

For the density, let $\uu$ be any critical subsolution. Let $\vv$ be a critical subsolution which is strict outside the Aubry set (whose existence is assured by Theorem \ref{teo Aubry}). Then, for any $\lambda\in (0,1)$, the function 
$(1-\lambda)\uu +\lambda \vv$ is a subsolution which is strict outside the Aubry set. This subsolution can therefore be regularized using the above procedure, giving a subsolution $\ww$ which is strict and smooth outside the Aubry set. Moreover, both these steps can be done in such a way that $\|\uu-\ww\|_\infty$ is as small as wanted. This establishes the density.\medskip
\end{dimo}

We now additionally assume the Hamiltonians $H_i$ to be strictly convex in $p$ and derive some further information on the behavior of Clarke's generalized gradients of the critical subsolutions on the Aubry set. 

We start with a preliminary lemma. 

\begin{lemma}
Let $y\in\A$ and let  $\uu^1, \cdots,\uu^\ell$ be critical subsolutions. Then, for all $i\in \ind$, 
$$\bigcap_{k=1}^{\ell} \partial^c u^k_i(x)\not=\varnothing.$$
Moreover, it contains a vector $p_i$ which is extremal for all the sets $\partial^c u^k_i(x)$ and which satisfies 
$$H_i(y,p_i)+\big(B(y)\uu^k(y)\big)_i=0\qquad\hbox{for every $k\in\{1,\dots,\ell\}$}.$$
\end{lemma}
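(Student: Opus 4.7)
The plan is to reduce the problem to an averaged critical subsolution and then exploit strict convexity of $H_i$ together with the Clarke subdifferential calculus. Set
\[
\bar\uu:=\frac{1}{\ell}\sum_{k=1}^\ell \uu^k,
\]
which still lies in $\hh(0)$ by convexity of the $H_i$. The aim is to exhibit a vector $p_i\in \partial^c\bar u_i(y)$ which saturates the subsolution inequality for $\bar\uu$ at $y$; strict convexity will then force $p_i$ to belong to $\bigcap_k\partial^c u^k_i(y)$ and to be extremal in each of these sets.

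First, I would show that $\beta_i:=(B(y)\uu^k(y))_i$ is independent of $k$. By Proposition \ref{Man sous}(iii), each $\uu^k$ is dominated by $\Phi_{\cdot,i}(y,\cdot)+u^k_i(y)\1$ with equality in the $i$-th component at $y$; since the dominating vector is actually a critical solution (because $y\in\A$), Proposition \ref{prop pre-comparison} promotes the inequality to an equality of every component at $y$. Hence $\uu^k(y)=\Phi_{\cdot,i}(y,y)+u^k_i(y)\1$, and the degeneracy $B(y)\1=0$ yields $(B(y)\uu^k(y))_i=(B(y)\Phi_{\cdot,i}(y,y))_i$, which is independent of $k$; the same value is also taken by $(B(y)\bar\uu(y))_i$.

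By the Clarke subdifferential calculus for sums of Lipschitz functions, every $p\in\partial^c \bar u_i(y)$ admits a decomposition $p=\frac{1}{\ell}\sum_k p^k$ with $p^k\in\partial^c u^k_i(y)$. Proposition \ref{prop subsol equivalent def} applied to each $\uu^k$ gives $H_i(y,p^k)+\beta_i\leqslant 0$, hence by convexity $H_i(y,p)\leqslant -\beta_i$, with strict convexity of $H_i$ turning this strict unless $p^1=\cdots=p^\ell=p$. Consequently, any $p\in\partial^c \bar u_i(y)$ with $H_i(y,p)+\beta_i=0$ automatically lies in $\bigcap_k\partial^c u^k_i(y)$; setting $p_i:=p$, extremality in each $\partial^c u^k_i(y)$ follows because a nontrivial convex decomposition $p_i=(1-\lambda)q_1+\lambda q_2$ in some $\partial^c u^k_i(y)$ would combine with $H_i(y,q_j)+\beta_i\leqslant 0$ and strict convexity to give $H_i(y,p_i)+\beta_i<0$, contradicting saturation.

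The heart of the proof is thus to produce $p\in\partial^c \bar u_i(y)$ with $H_i(y,p)+\beta_i=0$. Proposition \ref{prop supersol on A_i} applied to $\bar\uu$ at $y\in\A$ identifies $D^-\bar u_i(y)$ as the natural locus for such $p$, and I expect the main obstacle to be that a merely Lipschitz function need not admit Dini subgradients at a prescribed point. I would address this through an approximation argument: replace $\bar u_i$ by a suitable sup-convolution $\bar u_i^\eps$, which is semiconvex and thus admits Dini subgradients everywhere, and which remains an approximate viscosity subsolution of the critical equation; the Aubry-point supersolution test (combined with the semigroup invariance of Proposition \ref{prop S(t) su A}, which encodes that $\bar\uu$ is a fixed point of $\S$ at $y$) then transfers, up to a vanishing error as $\eps\to 0$, to yield the desired $p\in\partial^c\bar u_i(y)$.
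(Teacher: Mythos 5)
Your overall architecture matches the paper's: average the $\uu^k$ into $\bar\uu\in\hh(0)$, find a Clarke gradient of $\bar u_i$ at $y$ that saturates the subsolution inequality, and let strict convexity collapse everything onto a single common extremal vector. Your treatment of the collapsing step via the Clarke sum rule $\partial^c\bar u_i(y)\subseteq\frac1\ell\sum_k\partial^c u_i^k(y)$ is a legitimate variant of the paper's argument (which instead approximates the saturating vector by reachable gradients $Dw_i(y_n)=\frac1\ell\sum_k Du_i^k(y_n)$ and passes to the limit), and your preliminary observation that $(B(y)\uu^k(y))_i$ is independent of $k$ — via Proposition \ref{prop pre-comparison} and $B(y)\1=0$ — is correct, though the paper obtains it as a byproduct of the chain of equalities rather than upfront.

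The genuine gap is in what you yourself call the heart of the proof: producing some $p\in\partial^c\bar u_i(y)$ with $H_i(y,p)+\beta_i=0$. You leave this as a sketch, and the route you sketch is problematic. A sup-convolution $\bar u_i^{\eps}$ does have nonempty subdifferential everywhere, but the standard transfer property sends $p\in D^-\bar u_i^{\eps}(y)$ to $D^-\bar u_i(y_\eps)$ for some nearby point $y_\eps$, and $y_\eps$ need not belong to $\A$; at such a point Proposition \ref{prop supersol on A_i} gives you nothing (a mere subsolution carries no lower bound $H_i(y_\eps,p)+(B(y_\eps)\bar\uu(y_\eps))_i\geqslant 0$ off the Aubry set), so the saturation does not "transfer up to a vanishing error" without substantial additional work. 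The paper's argument for this step is much more direct and bypasses $D^-$ entirely: if $H_i(y,p)+(B(y)\bar\uu(y))_i<0$ for \emph{every} $p\in\partial^c\bar u_i(y)$, then by compactness of $\partial^c\bar u_i(y)$, upper semicontinuity of $x\mapsto\partial^c\bar u_i(x)$, and continuity of $H_i$ and of $B\bar\uu$, the strict inequality $H_i(x,D\bar u_i(x))+(B(x)\bar\uu(x))_i<-\delta$ holds a.e.\ in a neighborhood of $y$, i.e.\ $\bar u_i$ is strict at $y$ — which, by Theorem \ref{teo char A}, forces $y\notin\A$, a contradiction. Replacing your sup-convolution paragraph with this contradiction argument closes the gap; the rest of your proof then goes through.
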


\begin{dimo}
Let $\ww=\cfrac{1}{\ell}\sum\limits_{k=1}^\ell\uu^k\in \hh(0)$ and let  $p_i\in \partial^c w_i(y)$ be such that 
\[
H_i(y,p)+\big(B(y)\ww(y)\big)_i=0.
\] 
Such a $p_i$ must exist because otherwise $w_i$ would be strict at $y$. 
Note that, by strict convexity of $H_i$, the vector $p_i$ must be an extremal point of  $\partial^c w_i(x)$, hence it is a reachable gradient of $w_i$. Let $y_n\to y$ be such that  $u_i^k$ is differentiable at $y_n$ for every $k\in\{1,\dots,\ell\}$ and $n\in\N$, and 
$$Dw_i(y_n)=\frac{1}{\ell} \sum_{k=1}^\ell Du^k_i(y_n) \to p_i.$$
 Up to extraction of a subsequence, we can assume that $Du_i^k(y_n)\to q_k$ for all $k\in \{1,\dots,\ell\}$. Then one readily obtains, by  Jensen's inequality, that
 $$
0=H_i(y,p_i)+\big(B(y)\ww(y)\big)_i
\leqslant 
\frac{1}{\ell}\sum_{k=1}^\ell\Big( H_i(y,q_k)+\big(B(y)\uu^k(y)\big)_i\Big) 
\leqslant 0.
$$
Therefore, all the inequalities
$\frac{1}{\ell}\big(H_i(y,q_k)+(B(y)\uu^k(y))_i\big)\leqslant0$
summing to an equality, we deduce, by strict convexity of $H_i$, that $q_1=\cdots =q_l=p$. 
Moreover, since 
$$
H_i(y,q_k)+\big(B(y)\uu^k(y)\big)_i=0\qquad\hbox{for every $k\in\{1,\dots,\ell\}$},
$$ 
and because of the strict convexity of $H_i$, one sees that $p_i$ is extremal, and thus reachable, for all the $u_i^k$.\medskip
\end{dimo}

We now extend the previous result as follows: 

\begin{prop}\label{prop common gradient}
Let $y\in \A$.  Then, for each $i\in\ind$, there exists a vector $p_i\in \R^N$ which is a reachable gradient of 
$u_i$ at $y$ for every $\uu\in\hh(0)$ and which satisfies 
$$
H_i(y,p_i)+\big(B(y)\uu(y)\big)_i=0.
$$
\end{prop}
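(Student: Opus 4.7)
The plan is to extend the previous lemma, which produces a common extremal reachable gradient for finitely many critical subsolutions, to the entire family $\hh(0)$ via a finite intersection argument in a compact level set.

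First I would show that, for $y \in \A$ and any $\vv \in \hh(0)$, the vector $B(y)\vv(y)$ is independent of $\vv$. Fix any index $j$ and consider $\uu^\vv := \Phi_{\cdot,j}(y,\cdot) + v_j(y)\1$; by Proposition \ref{Man sous}\,{\em (ii)--(iii)}, $\uu^\vv \in \hh(0)$ and $\vv \leqslant \uu^\vv$ with equality in the $j$-th component at $y$, while $\uu^\vv$ is in fact a critical solution since $y\in \A$. Proposition \ref{prop pre-comparison} then yields $\vv(y) = \uu^\vv(y)$, so $\vv(y) - v_j(y)\1 = \Phi_{\cdot,j}(y,y)$; combining this with $B(y)\1 = 0$ gives $B(y)\vv(y) = B(y)\Phi_{\cdot,j}(y,y)$. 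Set $a_i := -\big(B(y)\vv(y)\big)_i$ for each $i\in\ind$; this real number depends only on $(y,i)$.

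Next, I would fix $i\in\ind$ and, for each $\vv \in \hh(0)$, introduce the set
$$
K(\vv) := \big\{\,p \in \partial^* v_i(y) : H_i(y,p) = a_i\,\big\}.
$$
The coercivity assumption (H3) makes the level set $L:=\{p\in\R^N:H_i(y,p)=a_i\}$ compact, while a standard diagonal extraction shows that $\partial^* v_i(y)$ is closed; consequently each $K(\vv)$ is a compact subset of $L$. Apply the preceding lemma to any finite collection $\uu^1,\ldots,\uu^\ell\in\hh(0)$: it yields $p_i\in \bigcap_{k=1}^\ell\partial^c u_i^k(y)$ that is extremal in each $\partial^c u_i^k(y)$ and satisfies the Hamilton equation. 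Since $\partial^c u_i^k(y) = \overline{\mathrm{conv}}\big(\partial^* u_i^k(y)\big)$ with $\partial^* u_i^k(y)$ compact, extremality forces $p_i\in\partial^* u_i^k(y)$ for every $k$, hence $p_i\in\bigcap_{k=1}^\ell K(\uu^k)$.

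Thus $\{K(\vv)\}_{\vv\in\hh(0)}$ is a family of compact subsets of $L$ with the finite intersection property, and compactness of $L$ gives $\bigcap_{\vv\in\hh(0)}K(\vv)\neq \varnothing$. Any element $p_i$ of this intersection is a reachable gradient of $v_i$ at $y$ for every $\vv\in\hh(0)$ and, by construction, satisfies $H_i(y,p_i)+\big(B(y)\vv(y)\big)_i=0$. I do not foresee a serious obstacle: the content of Proposition \ref{prop pre-comparison} already ensures that the Hamilton equation involves a value of $B(y)\vv(y)$ that is well defined independently of $\vv$, after which the argument is a purely topological finite-intersection/compactness reduction of the preceding lemma.
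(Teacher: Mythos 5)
Your proof is correct and follows essentially the same route as the paper: define, for each $\vv\in\hh(0)$, the compact set of reachable gradients of $v_i$ at $y$ satisfying the equation, observe that the preceding lemma gives the finite intersection property, and conclude by compactness. The preliminary step showing that $B(y)\vv(y)$ is independent of $\vv$ is correct (it is essentially Theorem \ref{rigid} combined with $B(y)\1=0$) but not strictly needed, since the paper's formulation lets the level $\big(B(y)\uu(y)\big)_i$ depend on $\uu$ and the constancy then falls out of the nonempty intersection.
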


\begin{dimo}
For each critical subsolution $\uu$, let us denote by $P^{\uu}_i$ the set of reachable gradients  
$p$ of $u_i$ at $y$ that satisfy $H_i(y,p)+\big(B(y)\uu(y)\big)_i=0$. This set is not empty
 and compact. The proposition amounts to proving that 
\[
\bigcap_{\uu\in\hh(0)}P^{\uu}_i \neq \varnothing.
\]
If this were not the case, by compactness we could extract a finite empty intersection. But this would violate the previous lemma.\medskip
\end{dimo}

\end{section}

\begin{section}{Rigidity of the Aubry set and Comparison Principle}\label{comparison}

In this section we establish some interesting properties of the Aubry set and a comparison principle for the critical system. As we will see, these results follow rather easily thanks to the information gathered so far. 

We start with a remarkable rigidity phenomenon that takes place on the Aubry set.

\begin{teorema}\label{rigid}
Let $y\in\A$ and $i\in\ind$. Then
\[
 \vv(y)=\Phi_{\cdot,i}(y,y)+v_i(y)\1\qquad\hbox{for every $\vv\in\hh(0)$.}
\]
In particular, \quad $\vv(y)-\ww(y)\in\R\1$\quad for any $\vv,\,\ww\in\hh(0)$.
\end{teorema}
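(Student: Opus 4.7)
The plan is to compare any critical subsolution $\vv$ against the maximal critical subsolution $\Phi_{\cdot,i}(y,\cdot)$ shifted by $v_i(y)\1$, and then apply the rigidity result of Proposition \ref{prop pre-comparison}. The key point is that on $\A$ this shifted Ma\~n\'e column becomes a critical \emph{solution}, which puts us in the comparison setup of that proposition.

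More precisely, fix $y \in \A$ and $i \in \ind$ and set $\uu := \Phi_{\cdot,i}(y,\cdot) + v_i(y)\1$. Since $y\in\A$, Proposition \ref{prop Mane matrix} together with the definition of $\A$ tells us that $\Phi_{\cdot,i}(y,\cdot)$ is a critical solution, and adding the constant vector $v_i(y)\1$ preserves this (the critical system is invariant under addition of vectors in $\R\1$, because the coupling matrix is degenerate). Thus $\uu \in \hh(0)$ is a critical solution, hence in particular a supersolution of the critical weakly coupled system. On the other hand, by the maximality property in Proposition \ref{Man sous}(iii), we have $\vv - v_i(y)\1 \leqslant \Phi_{\cdot,i}(y,\cdot)$ on $\T^N$, i.e.\ $\vv \leqslant \uu$ on $\T^N$. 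Moreover the $i$-th component satisfies $u_i(y) = \Phi_{i,i}(y,y) + v_i(y) = v_i(y)$, so equality $v_i(y) - u_i(y) = 0$ holds there.

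Setting $M := \max_{j\in\ind}\max_{\T^N}(v_j-u_j)$, the inequality $\vv\leqslant\uu$ forces $M \leqslant 0$, while $v_i(y)-u_i(y)=0$ forces $M\geqslant 0$; hence $M=0$ and this maximum is attained at $(y,i)$. We now apply Proposition \ref{prop pre-comparison} to the pair $(\vv,\uu)$, which yields
\[
\vv(y) = \uu(y) + M\1 = \uu(y) = \Phi_{\cdot,i}(y,y) + v_i(y)\1,
\]
which is the claimed identity. The last assertion, that $\vv(y)-\ww(y)\in\R\1$ for any $\vv,\ww\in\hh(0)$, is immediate: both $\vv(y)$ and $\ww(y)$ equal $\Phi_{\cdot,i}(y,y)$ up to a vector in $\R\1$, so their difference is $\big(v_i(y)-w_i(y)\big)\1$.

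The only step that requires a real input is the ability to force equality at $y$ starting from a one-sided inequality on $\T^N$; everything else is bookkeeping. That step is precisely supplied by Proposition \ref{prop pre-comparison}, which is why the whole argument is essentially a single application of that rigidity lemma once one has identified the correct solution $\uu$ to compare $\vv$ with. No further regularity or differentiability of $\vv$ on $\A$ is needed.
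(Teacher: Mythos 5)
Your proof is correct and follows essentially the same route as the paper: compare $\vv$ with the shifted Ma\~n\'e column $\uu=\Phi_{\cdot,i}(y,\cdot)+v_i(y)\1$, which is a critical solution because $y\in\A$, note $\vv\leqslant\uu$ with equality in the $i$-th component at $y$, and conclude by Proposition \ref{prop pre-comparison} with $x_0=y$ and $M=0$. Your version is just slightly more explicit about why $\uu$ is a solution and why $M=0$; no gaps.
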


\begin{dimo}
Take $\vv\in\hh(0)$ and set $\uu:=\Phi_{\cdot,i}(y,\cdot)+v_i(y)\1$. According to Proposition \ref{Man sous}, $\uu$ is a critical solution satisfying $\vv\leqslant\uu$ in $\T^N$ and $v_i(y)=u_i(y)$. By applying Proposition \ref{prop pre-comparison} with $x_0:=y$ we get the assertion.\medskip
\end{dimo}

\begin{oss}\label{oss inferno}
On the other hand, the above property does not hold when $y\not\in\A$. Indeed, the proof of Lemma \ref{lemma inferno} shows that any 
critical subsolution $\vv$ which is strict at $y$ can be modified in such a way that the output is a critical subsolution all of whose components except one coincide at $y$ with those of $\vv$.  
\end{oss}
We derive two corollaries:
\begin{cor}
Let $y\in\A$. Then
the matrix $\Phi(y,y)$ is antisymetric.
\end{cor}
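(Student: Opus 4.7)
The plan is to derive the antisymmetry of $\Phi(y,y)$ as a direct consequence of Theorem \ref{rigid}, by applying it twice with different pivot indices and then cancelling the contributions from a fixed critical subsolution.

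More precisely, fix $y\in\A$, fix two indices $i,j\in\ind$, and pick any $\vv\in\hh(0)$ (which exists by Proposition \ref{prop critical value}). Theorem \ref{rigid}, applied with pivot index $i$, asserts that
\[
\vv(y)=\Phi_{\cdot,i}(y,y)+v_i(y)\1,
\]
whose $j$-th coordinate reads $v_j(y)=\Phi_{j,i}(y,y)+v_i(y)$, i.e.
\[
v_j(y)-v_i(y)=\Phi_{j,i}(y,y).
\]
The same theorem, applied this time with pivot index $j$, yields
\[
v_i(y)-v_j(y)=\Phi_{i,j}(y,y).
\]
Adding these two identities, the left-hand sides cancel and we obtain
\[
\Phi_{i,j}(y,y)+\Phi_{j,i}(y,y)=0,
\]
which is the desired antisymmetry; in particular the diagonal entries $\Phi_{i,i}(y,y)$ vanish, in agreement with the definition.

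There is no real obstacle here: the substantive work was done in Theorem \ref{rigid}, which pins down the value at $y$ of every critical subsolution up to an additive scalar multiple of $\1$, and therefore trivializes the $\sup$ in the definition of $\Phi_{i,j}(y,y)$ for points on the Aubry set. The only thing one needs to observe is that Theorem \ref{rigid} may be invoked with any pivot index, which is exactly how it is stated.
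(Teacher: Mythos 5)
Your proof is correct and rests on the same key ingredient as the paper's, namely Theorem \ref{rigid}: the paper applies that theorem once to the particular subsolution $\Phi_{\cdot,j}(y,\cdot)$ and uses $\Phi_{j,j}(y,y)=0$, while you apply it twice to an arbitrary $\vv\in\hh(0)$ with pivots $i$ and $j$ and cancel; the two computations are interchangeable.
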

\begin{dimo}
Apply the previous theorem to the weak KAM solution $\Phi_{\cdot,j}(y,\cdot)$ and get
$$ \Phi_{\cdot,j}(y,y)=\Phi_{\cdot,i}(y,y)+\Phi_{i,j}(y,y)\1.$$
In particular,
$$ 0=\Phi_{j,j}(y,y)=\Phi_{j,i}(y,y)+\Phi_{i,j}(y,y).$$
\end{dimo}

\ 

\begin{cor}\label{cor2 rigid}
Let $y\in\A$. Then the critical solutions
 $\Phi_{\cdot,j}(y,\cdot)$ differ by a constant function.
 More precisely:
 $$\Phi_{\cdot,i}(y,\cdot)=\Phi_{\cdot,j}(y,\cdot)+\Phi_{j,i}(y,y)\1.$$
\end{cor}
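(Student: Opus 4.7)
The strategy is to deduce the equality of the two critical solutions from the triangular inequality of Proposition \ref{Man sous}(iv), coupled with the antisymmetry of $\Phi(y,y)$ established in the preceding corollary. Note that Theorem \ref{rigid} alone only gives the equality at points of $\A$, so something more is needed to extend it to every $x\in\T^N$.

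First I would specialize the triangular inequality
\[
\Phi_{i,k}(x,z)\leqslant \Phi_{j,k}(x,y)+\Phi_{i,j}(y,z)
\]
by freezing the intermediate point as $y\in\A$ and letting the outer points be $y$ and an arbitrary $x\in\T^N$. With the appropriate relabelling of indices, this yields, for every $\ell\in\ind$ and every $x\in\T^N$,
\[
\Phi_{\ell,i}(y,x)\leqslant \Phi_{j,i}(y,y)+\Phi_{\ell,j}(y,x).
\]
Swapping the roles of $i$ and $j$ in the same estimate I would obtain the reverse inequality
\[
\Phi_{\ell,j}(y,x)\leqslant \Phi_{i,j}(y,y)+\Phi_{\ell,i}(y,x).
\]

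Next, I would invoke the previous corollary, which tells us that $\Phi(y,y)$ is antisymmetric; in particular $\Phi_{i,j}(y,y)=-\Phi_{j,i}(y,y)$. Substituting this into the second inequality and comparing with the first forces
\[
\Phi_{\ell,i}(y,x)-\Phi_{\ell,j}(y,x)=\Phi_{j,i}(y,y)\qquad\text{for every }\ell\in\ind,\ x\in\T^N,
\]
which is precisely the claimed identity $\Phi_{\cdot,i}(y,\cdot)=\Phi_{\cdot,j}(y,\cdot)+\Phi_{j,i}(y,y)\1$.

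There is no substantial obstacle here: the argument is essentially a bookkeeping exercise once one recognizes that the triangular inequality, applied in both directions, turns a pair of one-sided estimates into an equality via the antisymmetry of $\Phi(y,y)$. The only point that requires attention is the correct matching of indices and arguments in Proposition \ref{Man sous}(iv), since the two Mañé components playing the role of the ``intermediate'' point must both be evaluated at the same Aubry point $y$ in order for the constant $\Phi_{j,i}(y,y)$ to appear.
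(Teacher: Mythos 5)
Your proof is correct and follows essentially the same route as the paper: both apply the triangular inequality of Proposition \ref{Man sous}(iv) twice with the intermediate point frozen at $y$ and the roles of $i$ and $j$ exchanged, then use the antisymmetry of $\Phi(y,y)$ from the preceding corollary to force the two one--sided estimates into an equality. The index bookkeeping in your specialization of the triangular inequality is accurate.
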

\begin{dimo}
Let us apply the last point of Proposition \ref{Man sous} twice:
$$\Phi_{k,i}(y,z)\leqslant \Phi_{j,i}(y,y)+\Phi_{k,j}(y,z),$$
$$\Phi_{k,j}(y,z)\leqslant \Phi_{i,j}(y,y)+\Phi_{k,i}(y,z).$$
In particular, we obtain
$$\Phi_{k,i}(y,z)\leqslant \Phi_{j,i}(y,y)+\Phi_{k,j}(y,z)\leqslant \Phi_{j,i}(y,y)+\Phi_{i,j}(y,y)+\Phi_{k,i}(y,z)=\Phi_{k,i}(y,z),$$
thanks to the previous corollary. Therefore all inequalities are equalities and that gives the result.\end{dimo}

\medskip

Next, we derive a comparison principle for sub and supersolutions of the critical weakly coupled system \eqref{critical wcs c=0} which generalizes to our setting an analogous result established in \cite{leyetal} for Hamiltonians of a  special Eikonal form, see Subsection \ref{sez ley} for more details. In particular, we obtain that $\A$ is a uniqueness set for the critical system. 

\begin{teorema}\label{teo comparison}
Let $\mathbf{v},\,\mathbf{u}\in\big(\D{C}(\T^N)\big)^m$ be a sub and a supersolution of the critical weakly coupled system \eqref{critical wcs c=0}, respectively. Assume that  
\begin{equation}\label{hyp comparison}
\hbox{for every $x\in\A$\quad there exists $i\in\ind$ such that}\quad v_i(x)\leqslant u_i(x).     
\end{equation}
Then
\[
 \vv(x)\leqslant \uu(x)\qquad\hbox{for every $x\in\T^N$.}
\]
In particular, two critical solutions that coincide on $\A$ coincide on the whole $\T^N$. 
\end{teorema}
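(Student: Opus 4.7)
The plan is to argue by contradiction, combining a convex perturbation by the smooth strict critical subsolution from Theorem~\ref{teo smooth subsol} with the rigidity of maximum points provided by Proposition~\ref{prop pre-comparison}, and finishing with a doubling-of-variables argument patterned on the proof of that proposition.

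Assume for contradiction that $M := \max_{i\in\ind}\max_{\T^N}(v_i - u_i) > 0$, and fix $\tilde\vv \in \hh(0)$ which is smooth and strict in $\T^N \setminus \A$. For $\lambda \in (0,1)$, set $\vv_\lambda := (1-\lambda)\vv + \lambda\tilde\vv$. The convexity of each $H_i$ together with Proposition~\ref{prop subsol equivalent def} shows that $\vv_\lambda \in \hh(0)$; moreover, around each $y \in \T^N \setminus \A$ there exist a neighborhood $V_y$ and $\delta_y > 0$ such that, for every $i\in\ind$, $H_i(x, Dv_{\lambda,i}) + (B(x)\vv_\lambda)_i \leqslant -\lambda\delta_y$ for a.e. $x \in V_y$. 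Since $\|\vv_\lambda - \vv\|_\infty = \lambda\|\tilde\vv - \vv\|_\infty$, the quantity $M_\lambda := \max_i\max_{\T^N}(v_{\lambda,i} - u_i)$ satisfies $|M_\lambda - M| \leqslant \lambda\|\tilde\vv - \vv\|_\infty$, so $M_\lambda > 0$ once $\lambda$ is small. Fix such a $\lambda$, let $(i_\lambda, x_\lambda)$ achieve $M_\lambda$, and apply Proposition~\ref{prop pre-comparison} to the pair $(\vv_\lambda,\uu)$ to obtain $\vv_\lambda(x_\lambda) = \uu(x_\lambda) + M_\lambda\1$; in particular $v_{\lambda,j}(x_\lambda) - u_j(x_\lambda) = M_\lambda$ for every $j \in \ind$.

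I split according to the location of $x_\lambda$. If $x_\lambda \in \A$, hypothesis \eqref{hyp comparison} supplies $i_0$ with $v_{i_0}(x_\lambda) \leqslant u_{i_0}(x_\lambda)$; substituting into $(1-\lambda)v_{i_0}(x_\lambda) + \lambda \tilde v_{i_0}(x_\lambda) - u_{i_0}(x_\lambda) = M_\lambda$ yields $M_\lambda \leqslant \lambda\|\tilde\vv - \vv\|_\infty$, which combined with $M_\lambda \geqslant M - \lambda\|\tilde\vv - \vv\|_\infty$ contradicts $M > 0$ for $\lambda$ small. If instead $x_\lambda \notin \A$, then $\vv_\lambda$ is strict in a neighborhood of $x_\lambda$ with some constant $\lambda\delta_0 > 0$. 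I then imitate the proof of Proposition~\ref{prop pre-comparison}: put $\Psi_\eps(x,y) := v_{\lambda,i_\lambda}(x) - u_{i_\lambda}(y) - |x-y|^2/(2\eps^2) - |x-x_\lambda|^2/2$ and let $(x_\eps, y_\eps)$ be a maximizer. Standard viscosity estimates give $x_\eps, y_\eps \to x_\lambda$, $|x_\eps - y_\eps|/\eps \to 0$, and a bounded $p_\eps := (x_\eps - y_\eps)/\eps^2$ with $p_\eps \in D^- u_{i_\lambda}(y_\eps)$ and $p_\eps + (x_\eps - x_\lambda) \in D^+ v_{\lambda,i_\lambda}(x_\eps) \subseteq \partial^c v_{\lambda,i_\lambda}(x_\eps)$. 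Writing the strict subsolution inequality for $\vv_\lambda$ at $x_\eps$ via Proposition~\ref{prop subsol equivalent def}(iii), the supersolution inequality for $\uu$ at $y_\eps$, subtracting, and letting $\eps \to 0$, I get $(B(x_\lambda)(\vv_\lambda(x_\lambda) - \uu(x_\lambda)))_{i_\lambda} \leqslant -\lambda\delta_0$. But this left-hand side equals $M_\lambda(B(x_\lambda)\1)_{i_\lambda}$, which vanishes by degeneracy of $B(x_\lambda)$, yielding the desired contradiction.

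The last statement is then immediate: if $\vv$ and $\uu$ are critical solutions coinciding on $\A$, the hypothesis \eqref{hyp comparison} holds both for the pair $(\vv,\uu)$ and for $(\uu,\vv)$, so applying the comparison in both directions gives $\vv = \uu$ on $\T^N$. The main delicate step I foresee is the case $x_\lambda \notin \A$, where neither $\vv$ nor $\uu$ is known to be differentiable at $x_\lambda$; the doubling device combined with Proposition~\ref{prop subsol equivalent def}(iii), which upgrades the subsolution inequality for $\vv_\lambda$ to an inequality on the entire Clarke gradient, is what permits the Hamiltonians to cancel in the limit, and the degeneracy of $B(x_\lambda)$ is essential for the final contradiction.
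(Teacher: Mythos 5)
Your proof is correct, but it takes a genuinely different route from the paper at the key step. Both arguments reduce the theorem to showing that the maximum of $v_i-u_i$ must be attained on $\A$ (where the hypothesis forces it to be nonpositive), and both invoke Proposition \ref{prop pre-comparison} to propagate the maximum to all components. The difference is how strictness is brought to bear at a maximum point outside $\A$. The paper approximates $\vv$ \emph{from below} by subsolutions that are simultaneously strict and \emph{smooth} outside $\A$ (the density part of Theorem \ref{teo smooth subsol}); smoothness makes $v_1$ a classical subtangent to $u_1$ at the maximum point, so the supersolution test applies in one line and contradicts strictness. You instead perturb $\vv$ by a convex combination with a strict subsolution $\widetilde\vv$, which yields strictness but no smoothness, and you compensate by rerunning the doubling-of-variables device of Proposition \ref{prop pre-comparison} to extract the limit inequality $\big(B(x_\lambda)(\vv_\lambda(x_\lambda)-\uu(x_\lambda))\big)_{i_\lambda}\leqslant -\lambda\delta_0$, which the degeneracy of $B(x_\lambda)$ contradicts. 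A pleasant byproduct of your route is that it bypasses the regularization machinery entirely: the smoothness of $\widetilde\vv$ plays no role in your argument, so Theorem \ref{teo Aubry} (mere existence of a strict critical subsolution) suffices in place of Theorem \ref{teo smooth subsol}. The cost is the extra doubling argument and the bookkeeping of the error $\lambda\|\widetilde\vv-\vv\|_\infty$ in the case $x_\lambda\in\A$ (the paper avoids this by approximating from below, which preserves the boundary condition exactly); you handle both correctly. One small point worth making explicit: the strict inequality you need at $x_\eps$ for elements of $D^+v_{\lambda,i_\lambda}(x_\eps)$ follows from the almost-everywhere inequality on the neighborhood $V$ via a local version of Proposition \ref{prop subsol equivalent def}, which is stated globally in the paper but localizes without difficulty, since reachable and Clarke gradients at a point are built from gradients at nearby almost-every points.
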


\begin{oss}\label{oss comparison}
The above theorem also implies that two critical solutions $\uu$ and $\vv$ are actually the same if \eqref{hyp comparison} holds with an equality. This is consistent with Theorem \ref{rigid}, which assures that this ``boundary'' condition amounts to requiring that $\uu=\vv$ on $\A$.  
\end{oss}

\begin{dimo}
In view of the density result stated in Theorem \ref{teo smooth subsol}, the critical subsolution $\vv$ can be approximated from below by a sequence of critical subsolutions that are, in addition, smooth and strict outside $\A$. Indeed, just pick a sequence $(\ww^n)_{n\in \mathbb \N}$ such that $\|\ww^n-\vv \|_\infty <n^{-1}$ and then define $\vv^n=\ww_n - n^{-1}\1$ which then verifies $\vv^n\leqslant \vv$ and $\|\vv^n-\vv \|_\infty <2n^{-1}$. Clearly, each element of the sequence still satisfies the boundary condition \eqref{hyp comparison}, hence it is enough to prove the statement by additionally assuming $\vv$ smooth and strict in $\T^N\setminus\A$. 

Let us set 
\[
M:=\max_{1\leqslant i\leqslant m}\max_{\T^N}\, (v_i-u_i)
\] 
and pick a point $x_0\in\T^N$ where such a maximum is attained. By Proposition \ref{prop pre-comparison} we know that $\vv(x_0)=\uu(x_0)+M\1$. 
If $x_0\not\in\A$, then $v_1$ would be a smooth subtangent to $u_1$ at $x_0$. The function $\uu$ being a supersolution, we would have
\[
0\leqslant H_1\big(x_0,Dv_1(x_0)\big)+\big(B(x_0)\uu(x)\big)_1=H_1\big(x_0,Dv_1(x_0)\big)+\big(B(x_0)\vv(x)\big)_1,
\]
in contrast with the fact that $\vv$ is strict in $\T^N\setminus\A$. Hence $x_0\in\A$ and by the hypothesis \eqref{hyp comparison} we get
$M\leqslant 0$, as it was to be proved.\medskip
\end{dimo}

Last, we show that the trace of any critical subsolution on the Aubry set can be extended to the whole torus in such a 
way that the output is a critical solution.

\begin{teorema}\label{teo Lax-type}
For any $\vv\in\hh(0)$, there exists a unique critical solution $\uu$ such that $\uu=\vv$ on $\A$.
\end{teorema}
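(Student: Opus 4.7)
Uniqueness follows immediately from Theorem \ref{teo comparison}: if $\uu_1, \uu_2$ are critical solutions with $\uu_1 = \uu_2$ on $\A$, then hypothesis \eqref{hyp comparison} holds trivially for both orderings, giving $\uu_1 \leqslant \uu_2 \leqslant \uu_1$ on $\T^N$.

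For existence, the natural candidate is the maximal critical subsolution with the prescribed trace. Set
$$
\F := \{\,\ww \in \hh(0)\,:\,\ww = \vv \text{ on } \A\,\}
$$
and define $u_i(x) := \sup_{\ww \in \F} w_i(x)$ componentwise. Since $\A \neq \varnothing$ (Theorem \ref{teo Aubry}) and $\F$ is equi-Lipschitz (Proposition \ref{prop a priori Lip}) with values pinned on $\A$, the supremum is finite and $\uu$ is Lipschitz continuous. By construction $\uu = \vv$ on $\A$, and Proposition \ref{prop Engler} yields $\uu \in \hh(0)$. It remains to show that $\uu$ is also a supersolution.

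Assume by contradiction that the supersolution test fails: there exist $x_0 \in \T^N$, $i_0 \in \ind$, and a $C^1$ subtangent $\psi$ of $u_{i_0}$ at $x_0$ such that $H_{i_0}(x_0, D\psi(x_0)) + (B(x_0)\uu(x_0))_{i_0} < 0$. Proposition \ref{prop supersol on A_i} forces $x_0 \notin \A$; since $\A$ is closed, choose $r > 0$ with $\overline{B_{2r}(x_0)} \cap \A = \varnothing$. Replacing $\psi$ by $\psi - d(x_0, \cdot)^2$ makes the subtangent strict without altering $D\psi(x_0)$, so by continuity some $\delta > 0$ satisfies
$$
H_{i_0}(x, D\psi(x)) + (B(x)\uu(x))_{i_0} < -\delta \quad \text{in } B_{2r}(x_0).
$$
With $\rho := \max_{x \in \T^N} b_{i_0 i_0}(x)$, pick $\eps \in (0, \delta/\rho)$ small enough that $\psi + \eps < u_{i_0}$ outside $B_r(x_0)$; this is possible because $\psi$ is a \emph{strict} subtangent. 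Define $\widetilde\uu$ by $\widetilde u_j = u_j$ for $j \neq i_0$ and $\widetilde u_{i_0} = \max\{u_{i_0}, \psi + \eps\}$. Then $\widetilde\uu$ coincides with $\uu$ outside $B_r(x_0)$, hence with $\vv$ on $\A$. The subsolution inequality for $j \neq i_0$ improves because $b_{j,i_0} \leqslant 0$ and $\widetilde u_{i_0} \geqslant u_{i_0}$; for the $i_0$-th equation, a $C^1$ supertangent test splits into two cases according to which argument of the max is attained locally, and the only delicate case (where $\widetilde u_{i_0} = \psi + \eps$ locally) is handled by the slack estimate $-\delta + \rho\eps < 0$. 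Thus $\widetilde\uu \in \F$ while $\widetilde u_{i_0}(x_0) = u_{i_0}(x_0) + \eps > u_{i_0}(x_0)$, contradicting the maximality of $\uu$.

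The main obstacle is this Perron-type upgrade, because the coupling acts with opposite signs on different equations: raising the $i_0$-th component helps every equation indexed by $j \neq i_0$ through $b_{j,i_0} \leqslant 0$, but destabilizes the $i_0$-th equation via the diagonal term $b_{i_0 i_0} > 0$. The quantitative choice $\rho\eps < \delta$ is precisely what balances the harmful diagonal perturbation against the strict slack furnished by the failed supersolution test, ensuring that the locally enlarged $\widetilde\uu$ remains in $\F$.
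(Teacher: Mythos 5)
Your proof is correct, but it takes a genuinely different route from the paper's. The paper constructs the solution dynamically, as $u_i(x)=\sup_{t>0}\big(\S(t)\vv\big)_i(x)$: by Proposition \ref{prop invariant} the map $t\mapsto\S(t)\vv$ is non--decreasing, by Proposition \ref{prop S(t) su A} it is pinned to $\vv$ on $\A$, and the equi--Lipschitz monotone limit is a fixed point of the semigroup, hence a critical solution by Proposition \ref{prop fixed point}. You instead run a Perron--type argument on the class $\F$ of critical subsolutions with the prescribed trace and upgrade its maximal element to a supersolution by a local bump; this is essentially the same device the paper uses to show that the Ma\~n\'e vectors are almost solutions (Proposition \ref{prop Mane matrix}), and your quantitative choice $\rho\eps<\delta$ correctly compensates the harmful diagonal coupling term, which is the only place where the vectorial structure could bite. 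Your route is more elementary in that it bypasses the semigroup machinery (in particular Proposition \ref{prop invariant}, whose proof occupies the Appendix), at the price of a longer local construction; the paper's route is shorter given the tools already in place and yields the additional information that $\S(t)\vv$ converges uniformly, as $t\to+\infty$, to the unique solution with trace $\vv$ on $\A$, a long--time--behaviour statement of independent interest. Two cosmetic points in your write--up: the radius $r$ should be shrunk so that both the strict inequality and the strict--subtangent property hold on $B_{2r}(x_0)$ (you fix $r$ from the condition $\overline{B_{2r}(x_0)}\cap\A=\varnothing$ before invoking continuity), and $\psi$ should be normalized so that $\psi(x_0)=u_{i_0}(x_0)$, which is what guarantees $0\leqslant\psi+\eps-u_{i_0}\leqslant\eps$ on the region where the max switches; neither affects the validity of the argument.
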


\begin{dimo}
The assertion is derived by setting 
\[
 u_{i}(x)=\sup_{t>0} \big(\S(t)\vv\big)_{i}(x)\qquad\hbox{for every $x\in\T^N$ and $i\in\ind$.}
\]
Indeed, the functions $\{\S(t)\vv\,:\,t>0\}$ are equi--Lipschitz and non--decreasing with respect to $t$ and satisfy  $\S(t)\vv=\vv$ on $\A$ for every $t>0$ by Proposition \ref{prop S(t) su A}. We infer that $\uu$ is a vector valued, Lipschitz continuous function and $\S(t)\vv\ucv \uu$ in $\T^N$ as $t\to +\infty$. Last, $\uu$ is a critical solution for it is a fixed point of the semigroup $\S(t)$. 
\end{dimo}

\end{section}

\begin{section}{Examples}\label{examples}

The critical value and the Aubry set for a weakly coupled system of the kind studied in this paper have, in general, no connections with those of each Hamiltonian, considered individually. This happens also in simple situations, see Remark \ref{oss leyetal} below. 
In this section, we present some examples where more explicit results may be obtained for the critical value and for the Aubry set.

\subsection{The setting of \cite{leyetal}.}\label{sez ley}
The first example we propose corresponds to the setting considered in \cite{leyetal}. Assume that all the Hamiltonians are of the form $H_i(x,p)=F_i(x,p)-V_i(x)$, where:
\begin{itemize}
\item[(a)] $F_i$ and $V_i$ take non--negative values;\smallskip
\item[(b)] $F_i$ is convex and coercive in $p$;\smallskip
\item[(c)] $F_i(x,0)=0$ for all $x\in\T^N$ and $i\in \ind$.\smallskip
\end{itemize}
Furthermore, assume that 
\begin{equation}\label{condition leyetal}
\bigcap_{i=1}^m V_i^{-1}(\{0\}) \neq \varnothing.
\end{equation}

Under these hypotheses, we claim that the critical value is $0$ (whatever the coupling matrix is) and that the Aubry set is nothing but 
$$\A=\bigcap_{i=1}^m V_i^{-1}(\{0\}). $$
Indeed, it is easily seen that the  null function $\uu^0$ always belongs to $\hh(0)$ under the first set of hypotheses. Therefore, $\hh(0)\neq \varnothing$ and the critical value verifies $c\leqslant 0$. To see that there is actually equality, consider a point $x_0\in \cap V_i^{-1}(\{0\})$ and any ($ C^1$) function $\uu$. An easy consequence of Proposition \ref{prop Im} yields that $B(x_0)\uu(x_0)$ must have a non--negative entry, say $i$, hence 
$$
H_i\big(x_0,Du_i(x_0)\big) + \big(B(x_0)\uu(x_0)\big)_i=F_i\big(x_0,Du_i(x_0) \big)+\big(B(x_0)\uu(x_0)\big)_i\geqslant 0.
$$
Therefore, $\uu$ cannot belong to a $\hh(-\eps)$ for a positive $\eps$. The same argument can be adapted in the viscosity sense for any (non necessarily $C^1$) function. Therefore $0$ is the critical value.

To prove that $\cap V_i^{-1}(\{0\})$ is the Aubry set, first notice that, for every $y\not\in \cap  V_i^{-1}(\{0\})$, there exists an index $j$ such that $V_j(y)>0$. Then the $j$--th component of the null function $\uu^0$ is strict at $y$. In view of Theorem \ref{teo char A} we get the inclusion 
$$
\A\subseteq  \bigcap_{i=1}^m  V_i^{-1}(\{0\}).
$$
The opposite inclusion is obtained as previously. Take any $\uu\in \hh(0)$ and $x_0\in \cap V_i^{-1}(\{0\})$. We will do as if $\uu$ is differentiable at $x_0$, but the argument carries on in the general case using test functions and the viscosity subsolution property. At $x_0$ we must have 
$$
F_i\big(x_0,D u_i(x_0)\big)+\big(B(x_0) \uu(x_0)\big)_{i} \leqslant 0\qquad\hbox{for every $i\in\ind$}.
$$ 
But this is only possible if $F_i(x_0,Du_i(x_0))=0$ for all $i\in \ind$  and $B(x_0) \uu(x_0)=0$. Indeed, otherwise, $B(x_0) \uu(x_0)$ will have a positive entry in view of Proposition \ref{prop Im}, which is impossible. In particular, the above inequality holds with an equality. Since this happens for any critical subsolution $\uu$, we get $x_0\in \A$ in view of Theorem \ref{teo char A}. As a byproduct, this also establishes that at any point of $\A$, any critical subsolution must take as value a vector belonging to $\R\,\1$. This is a particular case of Theorem \ref{rigid} and accounts for the type of symmetries already remarked in \cite{leyetal} for the critical solutions obtained via the asymptotic procedure therein considered.

\begin{oss}\label{oss leyetal}
It would be interesting to understand what the Aubry set is for the weakly coupled system considered in the previous example when condition \eqref{condition leyetal} is dropped. Unfortunately, we are not able to give an answer to this question. Note that, in this case,  $c<0$. Indeed, if $c$ were greater or equal than $0$, then the null function would satisfy condition (iii) in Theorem \ref{teo char A} at any point $y\in\T^N$, contradicting the fact that the Aubry set is nonempty. We point out that  similar examples appear in \cite[Remark 3.5]{mitaketran} and \cite[Example 1.2]{Ng}.
\end{oss}

\subsection{The setting of \cite{leyetal} revisited.}\label{sez nguyen}
This second example is taken from \cite{Ng}: the Hamiltonians are still of the form $H_i(x,p)=F_i(x,p)-V_i(x)$, but the quantities $\lambda_i:=\min_{\T^N} V_i$ are not required to be zero. The analogous condition 
\begin{equation*}\label{condition nguyeb}
\bigcap_{i=1}^m V_i^{-1}(\{\lambda_i\}) \neq \varnothing
\end{equation*}
is in force. Moreover, the coupling matrix is taken independent of $x$. We claim that 
\[
c=-\pi({\boldsymbol
\lambda}) \qquad\hbox{and}\qquad \A=\bigcap_{i=1}^m V_i^{-1}(\{\lambda_i\}),
\]
where ${\boldsymbol \lambda}=(\lambda_1,\dots,\lambda_m)$ and $\pi(\llambda)$ denotes the unique real number such that 
$\llambda-\pi(\llambda)\1\in\D {Im}(B)$. Indeed, by replacing each $H_i$ with $H_i+\lambda_i$ we reduce to the case of Example \ref{sez ley} and we conclude by exploiting the following result:

\begin{prop}\label{prop new}
Let $H_i$ be convex and coercive Hamiltonians for every $i\in\ind$ and assume that the coupling matrix is independent of $x$.  For every ${\boldsymbol \lambda}=(\lambda_1,\dots,\lambda_m)$, denote by $c_{{\boldsymbol \lambda}}$ and $\A_{\boldsymbol \lambda}$ the critical value and Aubry set of the weakly coupled system with $H_i-{\lambda_i}$ in place of $H_i$ for every $i\in\ind$. Then 
\[
c_{\boldsymbol \lambda}=c_{\mathbf 0}-\pi({\boldsymbol \lambda})\qquad\hbox{and}\qquad \A_{\boldsymbol \lambda}=\A_{\mathbf 0} 
\qquad\hbox{for every ${\boldsymbol\lambda}\in\R^m$,}
\]
where $\pi(\llambda)$ denotes the unique real number such that 
$\llambda-\pi(\llambda)\1\in\D{Im}(B)$. 
\end{prop}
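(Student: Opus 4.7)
The plan is to exhibit an explicit bijection between the subsolutions of the two systems, obtained by a single constant translation in $\R^m$. First I would observe that $\pi(\llambda)$ is well defined: since $B$ is irreducible and degenerate, Proposition \ref{prop Ker} gives $\D{Ker}(B)=\R\1$, hence $\dim\D{Im}(B)=m-1$; on the other hand $\1\notin\D{Im}(B)$ by Proposition \ref{prop Im}, so we have a direct sum decomposition $\R^m=\D{Im}(B)\oplus\R\1$. This yields both the existence and uniqueness of $\pi(\llambda)\in\R$ with $\llambda-\pi(\llambda)\1\in\D{Im}(B)$, and in particular of a constant vector $\ww\in\R^m$ such that
$$B\ww=\llambda-\pi(\llambda)\1.$$

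Next, because the coupling matrix does not depend on $x$ and $\ww$ is constant, we have $D(\uu+\ww)_i=Du_i$ and $B(\uu(x)+\ww)=B\uu(x)+B\ww$ pointwise. A direct computation then shows that, for every $a\in\R$ and every $\uu\in\left(\D{C}(\T^N)\right)^m$, the function $\uu$ is a viscosity subsolution of
$$H_i(x,Du_i)+(B\uu)_i=a\qquad\hbox{for every $i\in\ind$}$$
if and only if $\vv:=\uu+\ww$ is a viscosity subsolution of
$$(H_i-\lambda_i)(x,Dv_i)+(B\vv)_i=a-\pi(\llambda)\qquad\hbox{for every $i\in\ind$},$$
and the analogous equivalences hold for supersolutions and solutions. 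Thus the translation $\uu\mapsto\uu+\ww$ transports $\hh(a+\pi(\llambda))$ for the unperturbed system onto the corresponding subsolution set for the perturbed one at level $a$; by the definition \eqref{def critical value} of the critical value, taking infima over $a$ forces $c_{\llambda}=c_{\mathbf 0}-\pi(\llambda)$.

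For the equality of Aubry sets, I would notice that this translation preserves the property that a critical subsolution is strict at a point $y$ along the $i$--th component: the inequality $H_i(x,Du_i)+(B\uu)_i<c_{\mathbf 0}-\delta$ a.e.\ in a neighborhood of $y$ becomes, via the identity above, $(H_i-\lambda_i)(x,Dv_i)+(B\vv)_i<c_{\llambda}-\delta$ a.e.\ in the same neighborhood. Applying the equivalence \emph{(i)}$\Leftrightarrow$\emph{(iii)} of Theorem \ref{teo char A} to both systems, I conclude $y\notin\A_{\mathbf 0}$ if and only if $y\notin\A_{\llambda}$, proving $\A_{\llambda}=\A_{\mathbf 0}$. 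No individual step looks especially delicate: the whole argument reduces to a linear change of variables, and it works precisely because $B$ is independent of $x$; the corresponding statement for $x$--dependent coupling matrices would instead require solving $B(x)\ww(x)=\llambda-\pi(\llambda)\1$ for a continuous $\ww$, which is a strictly more rigid condition.
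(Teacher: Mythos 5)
Your proposal is correct and follows essentially the same route as the paper: both rest on the decomposition $\R^m=\D{Ker}(B)\oplus\D{Im}(B)=\R\1\oplus\D{Im}(B)$ and on translating by the constant vector $\mmu$ with $B\mmu=\llambda-\pi(\llambda)\1$, which conjugates the two systems up to the shift $-\pi(\llambda)$ of the right-hand side. The only (harmless) variation is that you identify the critical value by transporting the sets $\hh(a)$ of subsolutions and taking the infimum over $a$ as in \eqref{def critical value}, whereas the paper transports a critical solution and invokes the uniqueness of the level at which solutions exist; the argument for $\A_{\llambda}=\A_{\mathbf 0}$ via preservation of strictness is the same.
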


\begin{dimo}
Fix $\llambda=(\lambda_1,\dots,\lambda_m)\in\R^m$. Then $\llambda=\pi(\llambda)\1+B\mmu$ for some $\mmu\in\R^m$ and for a unique scalar $\pi(\llambda)$, for $\R^m\cong\D{Ker}(B)\oplus \D{Im}(B)$ in view of the results of Section \ref{sez linear algebra}.  
If $\uu$ is a solution of the critical weakly coupled system associated with $H_1,\dots,H_m$, then $\ww:=\uu+\mmu$ is a solution of 
\begin{equation}\label{eq new prop}
H_i(x,Dw_i)-\lambda_i+\big(B\ww(x)\big)_i=c_{\mathbf 0}-\pi(\llambda)\quad\hbox{in $\T^N$}\qquad\hbox{for every $i\in\ind$,}
\end{equation}
thus showing that $c_{\boldsymbol \lambda}=c_{\mathbf 0}-\pi({\boldsymbol \lambda})$. If we now take as $\uu$ a subsolution of the critical weakly coupled system associated with $H_1,\dots,H_m$ which is strict outside $\A_{\mathbf 0}$, we easily see that  
$\ww:=\uu+\mmu$ is a subsolution of \eqref{eq new prop} which is strict outside $\A_{\mathbf 0}$, thus showing 
$\A_{\llambda}\subseteq \A_{\mathbf 0}$. The reverse inclusion can be proved analogously. This concludes the proof. 
\end{dimo}

\subsection{Commuting Hamiltonians.}
In this last example we consider the case when the Hamiltonians are strictly convex and pairwise commute. If the Hamiltonians are of class $C^1$, that means
\begin{equation*}
 \{H_i,H_j\}(x,p):=\Big(\frac{\partial H_i}{\partial p}\frac{\partial H_j}{\partial x}-\frac{\partial H_j}{\partial p}\frac{\partial H_i}{\partial x}\Big)(x,p)=0\quad\hbox{in $\T^N\times \R^N$}
\end{equation*}
for every $i,j\in\ind$. If the Hamiltonians are only continuous, the commutation hypothesis must be expressed in terms of commutation of their Lax--Oleinik semigroup, see \cite{DavZav} for more details. We also make the additional assumption that, individually, all the Hamiltonians have $0$ as  critical value. Then, we claim that  $0$ is the critical value of the system as well (whatever the coupling is).

Indeed, it is proved in \cite{DavZav, zavido} that the Hamiltonians have the same critical solutions. In particular, there exists a function $u\in\D{Lip}(\T^N)$ satisfying
$$
H_i (x,D u)=0\quad\hbox{in $\T^N$}\qquad\hbox{for every $i\in\ind$}
$$
in the viscosity sense. 
Since the coupling is degenerate, we infer that the function 
$\uu^0 = u\1$ is a solution of
\begin{equation*}
H_i(x,Du^0_i)+\big(B(x)\uu^0(x)\big)_i=0\quad\hbox{in $\T^N$}\qquad\hbox{for every $i\in\ind$}.
\end{equation*}
Therefore, the claim is a direct consequence of Proposition \ref{prop unique c}.
Moreover, in this setting, we may localize the Aubry set of the system using those of the individual Hamiltonians. In order to do so, let us recall another result from \cite{DavZav}.
\begin{teorema}
Let $H_1,\cdots , H_m$ be pairwise commuting and strictly convex Hamiltonians, with common critical value equal to $0$. Then they have the same Aubry set $\A^*$. Moreover, there exists a common critical subsolution $v$ which is smooth outside $\A^*$ and strict for each Hamiltonian, i.e. 
$$
H_i\big(x,Dv(x)\big) < 0\quad\hbox{for every $x\in\T^N\setminus\A^*$ and $i\in\ind$}.
$$
\end{teorema}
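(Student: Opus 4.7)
My plan is to prove the theorem in three steps: first, upgrade the known identity of critical solutions for commuting strictly convex Hamiltonians with common critical value zero to an identity of their full subsolution sets; second, deduce equality of the Aubry sets from the characterization of $\A$ as the set where no strict subsolution exists; third, construct the common smooth strict subsolution by averaging individually regularized subsolutions.

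For the first step, I would denote by $T_i^t$ the Lax--Oleinik semigroup of $H_i$. The hypothesis of commutation means that $T_i^t T_j^s = T_j^s T_i^t$ for all $s, t \geqslant 0$. If $u$ is a critical subsolution of $H_i$, the scalar analogue of Proposition \ref{prop invariant} yields $T_i^t u \geqslant u$; applying $T_j^s$ and using monotonicity together with commutation gives $T_i^t(T_j^s u) \geqslant T_j^s u$, so $T_j^s u$ is itself a critical subsolution of $H_i$. Letting $s \to \infty$ and exploiting the identity of critical solutions, one concludes that each limit of $T_j^s u$ is a critical solution for both Hamiltonians; combined with an approximation argument of the kind developed in \cite{DavZav}, this yields that the critical subsolution sets of $H_i$ and $H_j$ coincide. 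Granted this, the second step is immediate: the scalar analogue of Theorem \ref{teo char A} characterizes $\A_i$ as the set of points at which no critical subsolution of $H_i$ is strict, and since the subsolution sets coincide, so do the associated Aubry sets, defining a common set $\A^*$.

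For the third step, for each $i \in \ind$ I would apply the scalar analogue of Theorem \ref{teo smooth subsol} to obtain a critical subsolution $v_i$ of $H_i$ that is smooth and strict outside $\A^*$; by the first step, each such $v_i$ is also a critical subsolution of every $H_j$. I would then set $v := \frac{1}{m}\sum_{i=1}^m v_i$, which is smooth outside $\A^*$, and by convexity of each $H_j$ in the momentum variable,
\[
H_j(x, Dv(x)) \leqslant \frac{1}{m}\sum_{i=1}^m H_j(x, Dv_i(x)) \leqslant \frac{1}{m} H_j(x, Dv_j(x)) < 0
\]
for every $x \in \T^N \setminus \A^*$ and $j \in \ind$, since every $v_i$ is a critical subsolution of $H_j$ while $v_j$ is moreover strict outside $\A^*$. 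The hard part will be the first step, the shared-subsolution property: it is the genuinely new structural fact, and soft invocations of the PDE machinery developed in this paper do not suffice; the propagation of the inequality $T_i^t u \geqslant u$ under $T_j^s$ uses the commutation hypothesis in an essential way, and the passage to the limit as $s \to \infty$ requires compactness together with the prior identification of weak KAM solutions.
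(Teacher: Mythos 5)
The pivotal claim of your Step 1 --- that commuting Hamiltonians with common critical value $0$ have the \emph{same set of critical subsolutions} --- is false, and both your Step 2 and Step 3 rest entirely on it. What your semigroup computation actually proves is only that the cone of critical subsolutions of $H_i$ is invariant under $T_j^s$; letting $s\to\infty$ produces a common weak KAM solution, but this gives no information whatsoever about whether the original $u$ is a subsolution of $H_j$, and no approximation argument can bridge that gap because the conclusion is wrong. Concretely: on $\T^2$ let $W(x_1)=1-\cos(2\pi x_1)$, $V(x_2)=1-\cos(2\pi x_2)$ and
\[
H_1(x,p)=p_1^2+p_2^2-W(x_1)-V(x_2),\qquad H_2(x,p)=p_1^2+2p_2^2-W(x_1)-2V(x_2).
\]
Both are smooth, strictly convex and coercive; they Poisson--commute, being linear combinations of $F_1=p_1^2-W(x_1)$ and $F_2=p_2^2-V(x_2)$, which satisfy $\{F_1,F_2\}=0$; both have critical value $0$ and Aubry set $\{W=0\}\times\{V=0\}=\{(0,0)\}$. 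Take $u(x_1,x_2)=\eta(x_1)\psi(x_2)$ with $\psi(x_2)=\frac{1}{2\pi k}\sin(2\pi k x_2)$ and $\eta$ smooth, supported in $(1/8,7/8)$, with $\eta\leqslant\frac{9}{10}\sqrt{W}$ everywhere and equality at $x_1=1/2$. Since $W\geqslant 1-\cos(\pi/4)>0$ on the support of $\eta$, for $k$ large enough one has
\[
|Du|^2=(\eta')^2\psi^2+\eta^2(\psi')^2\leqslant \frac{\|\eta'\|_\infty^2}{4\pi^2k^2}+\frac{81}{100}\,W\leqslant W+V\quad\hbox{on $\T^2$,}
\]
so $u$ is a classical, hence viscosity, critical subsolution of $H_1$. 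Yet at $(1/2,0)$ one has $Du=\big(0,\eta(1/2)\big)$ with $\eta(1/2)^2=\frac{81}{50}$, whence $H_2\big(x,Du\big)=\frac{324}{100}-W(1/2)=\frac{31}{25}>0$: $u$ is \emph{not} a critical subsolution of $H_2$.

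Consequently your Step 3 collapses: the individually regularized $v_i$ need not be subsolutions of the other Hamiltonians, so the average $\frac1m\sum_i v_i$ need not be a subsolution of any $H_j$, and the chain of inequalities you write is unjustified from its first link. Step 2 reaches a true conclusion (equality of the Aubry sets) but by an invalid route; the actual proof goes through the identity of the weak KAM solutions and the barrier/Ma\~n\'e potential characterization of the Aubry set, not through equality of subsolution cones. The existence of a \emph{common} subsolution, smooth and strict for every $H_i$ outside $\A^*$, is precisely the nontrivial content of the statement and requires a dedicated construction (in the example above such a $v$ does exist, e.g. any $v$ with $(\partial_1 v)^2\leqslant W$ and $(\partial_2 v)^2\leqslant V$ separately, but it is not obtained by averaging arbitrary subsolutions of the individual Hamiltonians). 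Note also that the paper itself offers no proof of this theorem: it is recalled verbatim from \cite{DavZav}, to which you should turn for the genuine argument.
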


Using this theorem, we easily see that the inclusion $\A\subseteq \A^*$ holds. Indeed, the function $\vv(x) := v(x)\1$  is a critical subsolution for the system which is strict outside $\A^*$. 

We also note that, as in the previous example, $\uu(y)\in\R\,\1$ for every $y\in\A$ and every $\uu\in\hh(0)$ in view of Theorem \ref{rigid}.

A particular case of this example is when all the $H_i$ are equal. In this case we get the more precise statement:
\begin{prop}
Let $H$ be a convex Hamiltonian and assume $H_1=\cdots = H_m=H$.  Then $\A = \A^*$. Moreover, all critical solutions of the system are of the form $\uu=u\1$ where $u$ is a critical solution of $H$.
\end{prop}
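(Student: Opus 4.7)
The strategy has two parts: first establish $\A=\A^*$, then use the rigidity Theorem \ref{rigid} together with the scalar comparison principle on $\A^*$ to force $\uu=u\1$. The key linear--algebraic observation I would use throughout is that, since all Hamiltonians coincide and $B(x)$ is a degenerate coupling matrix, the pointwise maximum $\bar w:=\max_{i\in\ind} w_i$ of the components of any $\ww\in\hh(0)$ is a critical subsolution of the scalar equation $H(x,Du)=0$, and when $\ww$ is moreover a critical solution the pointwise minimum $\underline w:=\min_{i\in\ind} w_i$ is a critical supersolution of the same equation. Indeed, if $k$ achieves the maximum of $w_1(x),\dots,w_m(x)$, the inequalities $b_{kj}(x)\leqslant 0$ for $j\ne k$ together with $w_j(x)\leqslant w_k(x)$ and $\sum_j b_{kj}(x)=0$ force $(B(x)\ww(x))_k\geqslant 0$; a supertangent to $\bar w$ at a point is a supertangent to some such $w_k$, and combining this with the $k$--th subsolution test yields $H(x,p)\leqslant 0$ for $p\in D^+\bar w(x)$. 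The symmetric argument works for $\underline w$.

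To show $\A=\A^*$, the inclusion $\A\subseteq\A^*$ is already contained in the preceding proposition. For $\A^*\subseteq\A$, I would apply Theorem \ref{teo Aubry} to obtain $\ww\in\hh(0)$ that is strict in $\T^N\setminus\A$: given $y\notin\A$ there exist $r,\delta>0$ such that for every $i$ one has $H(x,Dw_i(x))+(B(x)\ww(x))_i<-\delta$ for a.e.\ $x\in B_r(y)$. At almost every such $x$ the Lipschitz function $\bar w$ is differentiable and $D\bar w(x)=Dw_{k(x)}(x)$ for some index $k(x)$ realizing the maximum; combining the above strict inequality with $(B(x)\ww(x))_{k(x)}\geqslant 0$ gives $H(x,D\bar w(x))<-\delta$ a.e.\ in $B_r(y)$. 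The convexity of $H$ and the equivalence of the various subsolution tests (Proposition \ref{prop subsol equivalent def} in its scalar version) promote this to a strict viscosity subsolution property, which via the scalar analogue of Proposition \ref{prop i,y strict} forces $y\notin\A^*$.

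For the rigidity of critical solutions, let $\uu$ be a critical solution of the system. Since $B(x)\1=0$, any scalar critical solution $u_0$ of $H$ yields an element $u_0\1\in\hh(0)$. Theorem \ref{rigid} applied with $\vv=u_0\1$ gives $\uu(y)-u_0(y)\1\in\R\1$ for every $y\in\A$; hence every component of $\uu$ takes the same value at each $y\in\A=\A^*$, i.e.\ $\bar u=\underline u$ on $\A^*$. By the first paragraph, $\bar u$ is a scalar critical subsolution of $H$ and $\underline u$ a scalar critical supersolution. Pick any scalar critical solution $u$ of $H$ coinciding with this common trace on $\A^*$ (for instance by the scalar analogue of Theorem \ref{teo Lax-type} applied to $\bar u$). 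Applying twice the scalar comparison principle on $\A^*$ (subsolution $\bar u$ versus solution $u$, and solution $u$ versus supersolution $\underline u$) yields $\bar u\leqslant u\leqslant \underline u$ on all of $\T^N$; since trivially $\underline u\leqslant \bar u$ pointwise, the three functions coincide, so $\uu=u\1$.

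The main technical obstacle is checking rigorously that $\max_iw_i$ and $\min_iu_i$ behave as claimed with respect to the scalar viscosity notions, especially the preservation of the strict inequality through the maximum. This rests on the a.e./viscosity equivalence for convex Hamiltonians (Proposition \ref{prop subsol equivalent def}) and on the elementary but essential fact that, at a point where several components of $\ww$ achieve the same maximum, each is individually a subsolution there, so taking the pointwise supremum causes no loss; everything else in the plan is an application of results already established in the paper.
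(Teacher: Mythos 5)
Your proof is correct. The first half (showing $\A=\A^*$ by checking that the pointwise maximum $\bar w=\max_i w_i$ of a system critical subsolution is a scalar critical subsolution for $H$, strict wherever the system subsolution is, via $b_{kj}\leqslant 0$ and $w_j\leqslant w_k$ at a maximizing index $k$) is essentially the paper's argument. The second half takes a genuinely different route. The paper extends the common trace of $\uu$ on $\A$ to a scalar critical solution $\tilde u$ and then compares the two \emph{system} solutions $\uu$ and $\tilde u\,\1$ by means of Theorem \ref{teo comparison}, so the final step stays inside the machinery built for systems. You instead add the mirror observation that $\underline u=\min_i u_i$ is a scalar critical \emph{super}solution (same computation with a minimizing index, giving $(B(x)\uu(x))_k\leqslant 0$), use Theorem \ref{rigid} against a diagonal subsolution $u_0\1$ to force $\bar u=\underline u$ on $\A=\A^*$, and then squeeze $\bar u\leqslant u\leqslant \underline u\leqslant\bar u$ by two applications of the \emph{scalar} comparison principle on the Aubry set. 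Both arguments rely on the classical scalar fact that a subsolution's trace on $\A^*$ extends to a critical solution; yours additionally invokes the scalar uniqueness/comparison property of $\A^*$, which the paper never states in scalar form but which is standard for continuous convex coercive Hamiltonians, so this is a matter of citation rather than a gap. What your variant buys is that it isolates exactly which scalar structure is being used (max is a subsolution, min is a supersolution, rigidity pins the trace); what the paper's variant buys is economy, since Theorem \ref{teo comparison} does the comparison in one stroke.
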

\begin{dimo}
The inclusion $\A\subseteq\A^*$ can be proved arguing as above (note that we do not need  the strict convexity assumption here). Let us prove the converse statement. Pick  
 $\vv \in \hh(0)$ and set $v(x) := \max_{i} v_i(x)$\quad for every $x\in\T^{N}$. We claim that $v$ is a critical subsolution for $H$.
Indeed, let $x\in \T^N$ and $p\in D^+v(x)$. Then $v(x)= v_i(x)$ for some $i\in\ind$. Since $v\geqslant v_i$ with equality at $x$, we get $p\in D^+v_i(x)$. We now use the fact that $\vv$ is a subsolution of the system to get 
\begin{equation}\label{right ineq}
H(x,p)\leqslant H(x,p)+\big(B(x)\vv(x)\big)_i\leqslant 0, 
\end{equation}
where the first inequality comes from the fact that
$$
\big(B(x)\vv(x)\big)_i=\sum_{j=1}^m b_{ij}(x)v_j(x)\geqslant \sum_{j=1}^m b_{ij}(x)v(x)=0,
$$
which holds true since $b_{ij}(x)\leqslant 0$ and $v_j(x)\leqslant v(x)$ for every $j \neq i$.
Let us now assume that $\vv$ is strict outside $\A$. Then the right inequality in \eqref{right ineq} is strict as soon as $x\notin \A$, yielding that $v$ is a subsolution for $H$ which strict in the complementary of $\A$. This proves that $\A^*\subseteq \A$, hence  $\A=\A^*$.

Let now $\uu$ be a critical solution for the system. Then $v(x) := \max_{i} v_i(x)$ is a critical subsolution for $H$. Moreover, as
$$ 
\uu(x)=u_1(x)\1\quad\hbox{for every $x\in\A$},
$$ 
we deduce that $v=u_1$ on $\A$. Since $\A=\A^*$, there exists a critical solution $\tilde u$ for $H$ such that $\tilde u=v$ on $\A$. Now the function $\tilde  \uu = \tilde u \1$ is a critical solution of the weakly coupled system satisfying $\tilde\uu=\uu$ on $\A$. By the comparison principle, i.e. Theorem \ref{teo comparison}, we conclude that $\uu = \tilde \uu$.
\end{dimo}

\end{section}

\begin{appendix}
\begin{section}{}\label{appendix semiconcave}

In this appendix we want to give a proof of Proposition \ref{prop invariant}.\smallskip

\noindent In what follows, a function $u$ will be said to be {\em semiconcave} on an open subset $U$ of either $\T^N$ or $\R_{+}\times\T^N$ if, for every $x\in U$, there exists  a vector $p_x\in\R^N$ such that 
\begin{equation*}\label{cond semiconcavita}
 u(y)-u(x)\leqslant \langle p_x,y-x\rangle + d(y,x)\,\omega\big(d(y,x)\big)\qquad\hbox{for every $y\in U$,}
\end{equation*}
where $\omega$ is a modulus. It can be shown this is equivalent to requiring that for every $x,y \in U$ and $\lambda \in [0,1]$,
$$\lambda u(x) + (1-\lambda) u(y) \leqslant u\big( \lambda x + (1-\lambda)y \big) + \lambda(1-\lambda) \omega\big(d(x,y)\big).$$
 The vectors $p_x$ satisfying the above inequality are precisely the elements of $D^+ u(x)$, which is thus always nonempty in $U$. Moreover, $\partial^c u(x)=D^+u(x)$ for every $x\in U$. By the upper semicontinuity of the map $x\mapsto\partial^c u(x)$ with respect to set inclusion, we get in particular that $Du$ is continuous in its domain of definition, see \cite{CaSi00}.\smallskip

We start with the following

\begin{prop}\label{finoa0}
Let $T>0$ and $G :[0,T] \times \T^N \times \R^N\to \R$ be a locally Lipschitz Hamiltonian such that $G(s,\cdot,\cdot)$ is a strictly convex Hamiltonian, for every fixed $s\in [0,T]$. Let $u(t,x)$ be a Lipschitz function in $[0,T]\times\T^N$ that solves the evolutive Hamilton-Jacobi equation
\begin{equation}\label{eq G}
\frac{\partial u}{\partial t} + G(t,x,D_x u) = 0\qquad\hbox{in $(0,T)\times\T^N$},
\end{equation}
in the viscosity sense. Then
\begin{itemize}
\item[{\em (i)}] for every $0<\tau<T$, the function $u$ is semiconcave in $[\tau,T)\times \T^N$;\smallskip 
 \item[{\em (ii)}] if $u (0,\cdot)$ is semi--concave in $\T^N$, then  the functions $\{\,u(t, \cdot)\,:\, t\in [0,T)\,\}$ are equi--semiconcave.
\end{itemize}
\end{prop}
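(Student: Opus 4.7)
My plan is to derive the semiconcavity of $u$ from a Lax--Oleinik representation, coupled with a uniform semiconcavity estimate for the associated action functional.

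First, I would introduce the Lagrangian
$$L(s,x,v) := \sup_{p\in\R^N}\big\{\langle p,v\rangle - G(s,x,p)\big\},\qquad (s,x,v)\in [0,T]\times\T^N\times\R^N.$$
Under the standing hypotheses on $G$, $L$ is locally Lipschitz on its domain and strictly convex and superlinear in $v$. I would then define the minimal action
$$A(s,y,t,x) := \inf\left\{\int_s^t L(r,\gamma(r),\dot\gamma(r))\,\dd r\right\},$$
where the infimum runs over all absolutely continuous curves $\gamma:[s,t]\to\T^N$ with $\gamma(s)=y$ and $\gamma(t)=x$, and establish, by a standard dynamic programming argument for Lipschitz viscosity solutions of a time--dependent Hamilton--Jacobi equation, the Lax--Oleinik representation
$$u(t,x) = \inf_{y\in\T^N}\big\{u(s,y) + A(s,y,t,x)\big\},\qquad 0\leqslant s<t<T.$$

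The central technical step is the following lemma: for every $h>0$, there exists a modulus $\omega_h$, depending only on $h$, on $G$ and on the Lipschitz constant of $u$, such that for every $s\in[0,T-h)$ and every $y\in\T^N$, the map $(t,x)\mapsto A(s,y,t,x)$ is semiconcave on $[s+h,T)\times\T^N$ with modulus $\omega_h$. The proof of this lemma uses two ingredients: (a) since $u$ is Lipschitz and solves the equation, the super-differentials $D^+_x u$ are uniformly bounded, and, by Legendre duality, so are the velocities of near-optimal trajectories, which confines the minimization to a compact set where $L$ is Lipschitz with controlled constants; (b) a trajectory-perturbation argument in which, given $(t_0,x_0)$ and a nearby $(t,x)$, one reparametrizes and translates an almost optimal curve ending at $(t_0,x_0)$ into a competitor ending at $(t,x)$, the additional cost being controlled by $C(|t-t_0|^2+|x-x_0|^2)$ thanks to the strict convexity of $L$ in $v$.

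Given the lemma, point \emph{(i)} is immediate: for fixed $\tau>0$ and $(t_0,x_0)\in[\tau,T)\times\T^N$, apply the Lax--Oleinik representation with $s_0 := t_0-\tau/2$; the family $\{(t,x)\mapsto A(s_0,y,t,x)\}_{y\in\T^N}$ is equi-semiconcave on a neighborhood of $(t_0,x_0)$ by the lemma with $h=\tau/4$, and since the infimum of an equi-semiconcave family is semiconcave with the same modulus, $u$ is semiconcave on $[\tau,T)\times\T^N$. For point \emph{(ii)}, assuming moreover that $u(0,\cdot)$ is semiconcave, I would use the representation with $s=0$ and handle the short-time regime $t\to 0^+$ separately: for small $t$, near-optimal curves are essentially straight lines, so $A(0,y,t,x)$ is well approximated by $t\,L(0,y,(x-y)/t)$, and one checks that the map $y\mapsto u(0,y)+t\,L(0,y,(x-y)/t)$ inherits a semiconcavity modulus in $x$ (uniform in $t\in[0,T)$) from that of $u(0,\cdot)$, via strict convexity and Legendre duality. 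The main obstacle, I expect, will be the trajectory-perturbation argument in the merely locally Lipschitz, time-dependent setting: controlling the constants uniformly in $y$ and tracking their precise dependence on $h$ is the real heart of the proof, and is what makes the statement nontrivial.
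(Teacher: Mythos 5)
Your overall route is the same as the paper's: a Lax--Oleinik representation, a uniform velocity bound for minimizers (which the paper derives by differentiating $u$ along an optimal curve and using the Fenchel equality to get $\dot\gamma(s)\in\partial_pG(s,\gamma(s),p_x(s))$, rather than by Legendre duality on $D^+_xu$ -- both are fine), and a translation of optimal curves to propagate semiconcavity. For item \emph{(i)} the paper does less than you propose: it truncates $G$ outside a ball in $p$ to make it uniformly superlinear and then simply invokes the interior semiconcavity result of Cannarsa--Soner, so your direct proof of semiconcavity of the action $A(s,y,\cdot,\cdot)$ for $t\geqslant s+h$ is a legitimate but heavier alternative. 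One caveat there: under the standing hypotheses ($G$ only locally Lipschitz and strictly -- not uniformly -- convex) you cannot expect the quadratic bound $C(|t-t_0|^2+|x-x_0|^2)$ you claim; only a general modulus $d\,\omega(d)$ is available, which is however exactly what the paper's definition of semiconcavity asks for.

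The genuine gap is in your treatment of \emph{(ii)}. First, replacing $A(0,y,t,x)$ by $t\,L\big(0,y,(x-y)/t\big)$ with an $o(t)$ or $O(t^2)$ error is not a legitimate step for semiconcavity: uniform closeness of two functions transfers no information about second-order difference quotients, so a semiconcavity modulus for the approximate value function says nothing about $u(t,\cdot)$ itself. Second, even for the approximate formula, the reading in which you fix $y$ and vary $x$ fails: for $x_1,x_2$ at distance $d$ and $t\to0^+$ the velocities $(x_i-y)/t$ have size of order $d/t$ and leave every compact set, so the superlinear Lagrangian gives no uniform control and no modulus uniform in $t\in(0,T)$ can be extracted this way. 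Both problems disappear if you drop the approximation and the short-time/long-time splitting altogether and argue as the paper does, uniformly in $t$: take the actual minimizer $\gamma$ for $(t,x)$ with $x=\lambda x_1+(1-\lambda)x_2$, use it \emph{translated by the constant shifts} $(1-\lambda)h$ and $-\lambda h$ (so the velocity, hence the compact set on which $L$ is $K$--Lipschitz, is unchanged) as competitors for $(t,x_1)$ and $(t,x_2)$; the excess cost is then $\lambda(1-\lambda)\big(\omega(d(x_1,x_2))+tK\,d(x_1,x_2)\big)$, where $\omega$ is the semiconcavity modulus of $u_0$, which is exactly the equi--semiconcavity claimed in \emph{(ii)}.
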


\begin{dimo}
Since $u$ is Lipschitz, up to modifying $G$ outside $[0,T]\times\T^N\times B_R$ for a sufficiently large $R>0$, we can assume that $G$ is superlinear in $p$, uniformly with respect to $(t,x)$. We are then in the setting considered by 
Cannarsa and Soner in \cite{canson} and item {\em (i)} follows from their results.

Let us prove {\em (ii)}. 
Let us denote by $L(t,x,q)$ the the Lagrangian associated with $G$ through the Fenchel transform  and by $u_0$ the initial datum $u (0,\cdot)$. It is well known, see for instance \cite{CaSi00}, that the following representation formula holds:
\begin{equation}\label{Lax-Oleinik formula}
u(t,x)=\inf_{\xi(t)=x}\Big( u_0\big(\xi(0)\big)+\int_0^t L\big(s,\xi (s), \dot \xi (s) \big) \dd s\Big),\qquad (t,x)\in (0,T)\times\T^N,
\end{equation}
where the infimum is taken by letting $\xi$ vary in the family of absolutely continuous curves from $[0,t]$ to $\T^N$. Moreover, the
minimum is attained by some curve $\gamma$, which is, in addition, Lipschitz continuous (actually, of class $ C^1$), see \cite{ClVi85}. 

We claim that there exists a constant $\kappa$, only depending on $G$ and on the Lipschitz constant of $u$ in $[0,T]\times\T^N$, such that $\|\dot\gamma\|_\infty\leqslant \kappa$. To this aim, we apply  Proposition 2.4 in \cite{Ishii08} to the function $u(t,x)$ and the curve $s\mapsto (s,\gamma(s))$  to get 
\begin{equation}\label{eq derivata u}
\frac{\dd}{\dd s}\,u\big(s,\gamma(s)\big)=p_t(s)+\langle p_x(s),\dot\gamma(s)\rangle \qquad \hbox{for a.e. $s\in [0,t]$},
\end{equation}
where $s\mapsto \big(p_t(s),p_x(s)\big)$ is a measurable and essentially bounded function on $[0,t]$ such that 
\[
 \big(p_t(s),p_x(s)\big)\in\partial^c u\big(s,\gamma(s)\big) \qquad\hbox{for a.e. $s\in [0,t]$}.
\]
By integrating \eqref{eq derivata u} and using the Fenchel inequality we get
\begin{eqnarray*}
u(t,x)&=&u_0\big(\gamma(0)\big)+\int_0^t p_t(s)+\langle p_x(s),\dot\gamma(s)\rangle \,\dd s\\
     &\leqslant&
  u_0\big(\gamma(0)\big)+\int_0^t p_t(s)+G\big(s,\gamma(s),p_x(s)\big)+L\big(s,\gamma(s),\dot\gamma(s)\big) \,\dd s\\
     &\leqslant& 
  u_0\big(\gamma(0)\big)+\int_0^t L\big(s,\gamma(s),\dot\gamma(s)\big) \,\dd s,
\end{eqnarray*}
where in the last inequality we used the fact that $u$ is a (sub)--solution of the time dependent equation, i.e. 
\[
 p_t+G(t,x,p_x)\leqslant 0\qquad\hbox{for every $(p_t,p_x)\in\partial^c u(t,x)$ and $(t,x)\in (0,T)\times\T^N$.}
\]
Since $\gamma$ is minimizing, all the inequalities must be equalities, in particular we obtain
\begin{equation}
\dot\gamma(s)\in\partial_p G\big(s,\gamma(s),p_x(s)\big)\qquad\hbox{for a.e. $s\in [0,t]$}.
\end{equation}
This proves the claim by choosing 
\[
 \kappa:=\sup\big\{|q|\,:\,q\in\partial_p G(s,x,p),\,(s,x)\in [0,T]\times\T^N,\,|p|\leqslant \D{Lip}\left(u;[0,T]\times\T^N\right)\,\big\},
\]
which is finite thanks to the convexity and the growth assumptions assumed on $G$ with respect to $p$. 

Let us now fix $t\in (0,T)$, $x_1, x_2\in \T^N$,  $\lambda \in [0,1]$ and set $x=\lambda x_1+ (1-\lambda) x_2$. Note that 
$x_1 = x+ (1-\lambda)h$ and $x_2= x- \lambda h$ for  $h=x_1-x_2$. Let us denote by $\gamma$ a curve realizing the infimum in \eqref{Lax-Oleinik formula} for such a pair of $(t,x)$, by $K$ a Lipschitz constant for $L$ restricted to $ [0,T] \times \T^N \times B(0,2\kappa)$ and by $\omega$ a semi--concavity modulus for $u_0$. We get 
\begin{align*}
\lambda u(t,x_1)&+ (1-\lambda) u(t,x_2) - u(t,x)\\
&\leqslant \lambda \Big( u_0\big(\gamma(0)+(1-\lambda )h\big)
+\int_0^t L\big(s,\gamma (s)+(1-\lambda ) h , \dot \gamma (s) \big) \dd s \Big)\\
&\qquad+  (1-\lambda ) \Big( u_0\big(\gamma(0)-\lambda h\big)+\int_0^t L\big(s,\gamma (s)-\lambda  h , \dot \gamma (s) \big) \dd s \Big) \\
&\qquad- \Big( u_0\big(\gamma(0)\big)+\int_0^t L\big(s,\gamma (s), \dot \gamma (s) \big)\dd s\Big)\\
&= \lambda u_0\big(\gamma(0)+(1-\lambda )h\big) + (1-\lambda) u_0\big(\gamma(0)-\lambda h\big) - u_0\big(\gamma(0)\big)\\
&\qquad+ \lambda \Big(\int_0^t \Big(L\big(s,\gamma (s)+(1-\lambda ) h , \dot \gamma (s) \big) -  L\big(s,\gamma (s), \dot \gamma (s) \big)\Big)\dd s \\
&\qquad+  (1-\lambda ) \int_0^t \Big( L\big(s,\gamma (s)-\lambda  h , \dot \gamma (s) \big) -  L\big(s,\gamma (s), \dot \gamma (s) \big)\Big) \dd s \\
&\leqslant \lambda (1-\lambda)\Big( \omega ( d(x_1,x_2)) + t\, Kd(x_1,x_2)\Big),
\end{align*}
which proves the assertion.\medskip
\end{dimo}

\medskip

The result just proved will be applied to weakly coupled systems as follows: 

\begin{prop}\label{regularity}
Let $T>0$ and $\uu=(u_1,\dots,u_m)\in \big(\D{Lip}([0,T]\times\T^N)\big)^m$ be a solution of the evolutionary weakly coupled system (\ref{evo wcoupled system}). 
Let $B(x)$ be Lipschitz and $H_i$ be
 {locally Lipschitz and strictly convex}, for some fixed index 
$i\in\ind$. Then, for all $0<\tau <T$,  the function 
 $u_i$ restricted to $[\tau,T)\times \T^N$ is semiconcave. 
 Moreover, if, the initial condition $u_i(0,\cdot)$ is semiconcave, then  the functions $\{\,u_i(t, \cdot)\,:\, t\in [0,T]\,\}$ are equi--semiconcave.
\end{prop}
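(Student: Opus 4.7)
The strategy is to freeze the coupling term and reduce to a scalar, time-dependent Hamilton--Jacobi equation where the previously established Proposition \ref{finoa0} applies.

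First, I would define the auxiliary scalar Hamiltonian
\[
G(t,x,p) := H_i(x,p) + \big(B(x)\uu(t,x)\big)_i,\qquad (t,x,p)\in [0,T]\times\T^N\times\R^N.
\]
Because $\uu$ is Lipschitz on $[0,T]\times\T^N$ and $B$ is Lipschitz on $\T^N$, the additional term $(B(x)\uu(t,x))_i$ is Lipschitz on $[0,T]\times\T^N$ and does not depend on $p$. Combined with the assumption that $H_i$ is locally Lipschitz in $(x,p)$, this shows $G$ is locally Lipschitz on $[0,T]\times\T^N\times\R^N$. Moreover, for each $(t,x)$, the map $p\mapsto G(t,x,p)$ is strictly convex since it differs from the strictly convex function $H_i(x,\cdot)$ only by a constant. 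Thus $G$ meets the hypotheses of Proposition \ref{finoa0}.

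Next, by interpreting the $i$--th equation of system \eqref{evo wcoupled system}, the function $u_i\in\mathrm{Lip}([0,T]\times\T^N)$ satisfies
\[
\frac{\partial u_i}{\partial t} + G(t,x,D_x u_i) = 0\qquad\hbox{in $(0,T)\times\T^N$}
\]
in the viscosity sense: this is immediate once one observes that testing $u_i$ against a smooth test function at a point $(t_0,x_0)$ and pairing it with the actual values of the other components $u_j(t_0,x_0)$ (which are continuous) recovers exactly the sub/superdifferential inequalities defining a viscosity solution of the above scalar equation.

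Finally, I would apply Proposition \ref{finoa0} directly to $u_i$ and $G$: item (i) yields the semiconcavity of $u_i$ on $[\tau,T)\times\T^N$ for every $0<\tau<T$, and item (ii) yields, under the additional hypothesis that $u_i(0,\cdot)$ is semiconcave, the equi--semiconcavity of the family $\{u_i(t,\cdot):t\in[0,T]\}$ (the endpoint $t=T$ being recovered by uniform continuity from the equi--semiconcavity on $[0,T)$). Since the argument is essentially a reduction, there is no real obstacle; the only point that deserves care is checking the regularity of $G$ in $(t,x,p)$ jointly, which relies crucially on the Lipschitz regularity assumed on $B$ and $\uu$.
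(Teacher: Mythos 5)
Your proposal is correct and follows essentially the same route as the paper: the paper's proof likewise sets $G(t,x,p)=H_i(x,p)+\big(B(x)\uu(t,x)\big)_i$, observes that $u_i$ solves the scalar equation \eqref{eq G}, and concludes by Proposition \ref{finoa0}. Your additional verifications (joint local Lipschitz regularity of $G$ from the Lipschitz character of $B$ and $\uu$, strict convexity in $p$ since the coupling term is independent of $p$) are exactly the points implicitly used there.
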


\begin{proof}
The function $u_i$ solves, for the given index $i\in \ind$,  a Hamilton--Jacobi equation of the kind \eqref{eq G} with 
\[
 G(t,x,p)=H_i(x,p)+\big(B(x)\uu(t,x)\big)_i,\qquad (t,x,p)\in [0,T]\times\T^N\times\R^N.
\]
The conclusion follows by applying Proposition \ref{finoa0}.
\end{proof}

We are now ready to prove Proposition \ref{prop invariant}.\\

\noindent{\bf Proof of Proposition \ref{prop invariant}.}
We recall that, by convexity of the Hamiltonians, subsolutions to the critical system coincide with almost everywhere subsolutions. This fact will be repeatedly exploited along the proof. 

Assume first that $t\mapsto \S(t)\uu+t\,a\1$ is non--decreasing. Pick $t_0>0$ such that the map $(t,x)\mapsto \S(t)\uu(x)$ is differentiable at $(t_0,x)$ for almost every $x\in\T^N$ and 
\[
  \partial_t \S(t_0)\uu(x) \geqslant -a\1\qquad\hbox{for a.e. $x\in\T^N$.}
\]
By the Lipschitz character of the map $(t,x)\mapsto \S(t)\uu(x)$ and Fubini's theorem, this holds true for almost every $t_0>0$.
Using the evolutionary equation, which is verified at every differentiability point of $ \S(t)\uu(x)$, we deduce that, for every $i\in\ind$,
$$
H_i\Big(x,D\big(\S(t_0)\uu\big)_i(x)\Big)+\big(B(x)\S(t_0)\uu(x)\big)_i\leqslant a \quad\hbox{for a.e. $x\in\T^N$},
$$
that is, $\S(t_0)\uu\in \hh(a)$. This being true for almost every $t_0>0$, the conclusion follows by stability of viscosity subsolutions.

Let us now assume reciprocally that $\uu \in\hh (a)$. We first approximate each Hamiltonian $H_i$ with a sequence $(H_i^k)_k$ of convex Hamiltonians that are, in addition, locally Lipschitz in $(x,p)$ and strictly convex in $p$. This can be done by taking a sequence $(\rho_k)_k$ of standard mollifiers on $\R^N$ and by setting
\[
 H_i^k(x,p)=\int_{B_1} \rho_k(y)H_i(x-y,p)\,\dd y +\frac{|p|^2}{k},\qquad (x,p)\in\T^N\times\R^N.
\]
Analogously, we approximate the matrix $B(x)$ by a sequence of coupling matrixes $\big(B_k(x)\big)_k$ that are Lipschitz in $x$. 
Note that, for each index $i\in\ind$, \quad $H_i^k\ucv H_i$ \quad in $\T^N\times\R^N$ and $B_k\ucv B$ in $\T^N$ as $k\to +\infty$. Let us denote by $\hh_k(a)$ the set of $\aaa$--subsolution of the weakly coupled system \eqref{wcoupled system} 
with $\aaa=a\1$ and with $H_1^k,\dots,H_m^k$ and $B_k$ in place of $H_1,\dots,H_m$ and $B$, respectively, and by $\S_k$ the semigroup associated with the corresponding time--dependent equation \eqref{evo wcoupled system}.   

Next, we approximate $\uu$ with a sequence of $(\uu^n)_n$ of functions that are component--wise semi--concave by setting
$$
u^n_i(x) = \inf_{y\in \T^N } u_i(y)+nd(y,x)^2 \qquad\hbox{for every $x\in\T^N$ and $i=1,\cdots ,m$}.
$$
Fix $\eps>0$. A standard argument shows that, for $n$ large enough, $\uu^n\in \hh (a+\eps)$. Moreover, by the Lipschitz character of $\uu^n$ and by the local uniform convergence of $(H_1^k,\dots,H_m^k)$ to $(H_1,\dots,H_m)$ and of $B_k$ to $B$, we also have that $\uu^n\in \hh_k (a+2\eps)$ for $k$ sufficiently large. We now apply Proposition \ref{regularity} to infer that  the map $(t,x)\mapsto\S_k(t)\uu^n(x)$ is semiconcave in $[0,\tau]\times\T^N$ for every $\tau>0$. By using the  fact that the gradient of a semiconcave function is continuous in its domain of definition and by choosing  $\tau>0$ small enough, we get  
$\S_k(t)\uu^n\in \hh_k (a+3\eps)$ for every $0\leqslant t \leqslant \tau$. 
By exploiting this information in the evolutive weakly coupled system, we get 
\[
\frac{\partial}{\partial t}\S_k(t)\uu^n(x) \geqslant -(a+3\eps)\1\qquad\hbox{for a.e. $(t,x)\in (0,\tau)\times\T^N$,}
\]
i.e.
\[
\S_k(t+h)\uu^n \geqslant \S_k(t)\uu^n -h (a+3\eps)\1\qquad\hbox{for every $0<t<t+h\leqslant\tau$}.
\]
Now, by the comparison principle for the evolution equation and by using the fact that the semigroup commutes with the addition of scalar multiples of the vector $\1$, we obtain that $t\mapsto \S_k(t)\uu^n -t(a+3\eps)\1$ is non decreasing. 
We now exploit the fact that 
\[
 \S_k(t)\uu^n\underset{k\to +\infty}{\ucv}\S(t)\uu^n\quad\hbox{and}\quad \S(t)\uu^n\underset{n\to +\infty}{\ucv}\S(t)\uu\qquad\hbox{in $\R_+\times\T^N$}
\]
to infer that 
\[
 t\mapsto \S(t)\uu^n -t(a+3\eps)\1\quad\hbox{is non--decreasing on $[0,+\infty)$.}
\]
Being this true for every $\eps>0$, we finally have that $t\mapsto \S(t)\uu^n -ta\1$ is non--decreasing on $[0,+\infty)$.

The last assertion follows from the equivalence just proved, together with the fact that the semigroup $\S(t)$ is non--decreasing and commutes with addition of vectors of the form $a\,\1$ with $a\in\R$.\qed\medskip

%
%

\end{section}
\end{appendix}

\bibliography{weakly}
\bibliographystyle{siam}

\end{document}